\documentclass[11pt]{amsart}
\usepackage{amsfonts,mathtools,amssymb,amsthm,tikz-cd,color,hyperref,enumitem}
\usepackage{graphicx,cleveref}
\usepackage[font=small]{caption}
\usepackage[normalem]{ulem}
\usepackage[twoside=false]{geometry}
\usepackage[backend=bibtex,
style=alphabetic,
autocite=inline,maxbibnames=100,sortcites,doi=false,isbn=false]{biblatex}
\addbibresource{references.bib}
\renewbibmacro{in:}{}

\usetikzlibrary{decorations.pathreplacing}

\title{Braid groups, elliptic curves, and resolving the quartic}
\author{Peter Huxford and Jeroen Schillewaert}
\address{PH: Department of Mathematics, University of Chicago, 5734 S University Ave, Chicago, IL 60637, USA}
\email{pjhuxford@uchicago.edu}
\address{JS: Department of Mathematics, University of Auckland, 38 Princes Street, Auckland 1010, New Zealand}
\email{j.schillewaert@auckland.ac.nz}

\newcommand{\C}{\mathbb{C}}
\newcommand{\HH}{\mathbb{H}}
\newcommand{\M}{\mathcal{M}}
\newcommand{\R}{\mathbb{R}}
\newcommand{\Z}{\mathbb{Z}}
\newcommand{\Q}{\mathbb{Q}}
\newcommand{\D}{\mathbb{D}}

\newcommand{\CP}{\mathbb{CP}}

\newcommand{\PP}{\mathbb{P}}

\newcommand{\gap}{\,\rule[-0.1mm]{0.5cm}{0.15mm}\,}

\DeclareMathOperator{\Conf}{Conf}

\DeclareMathOperator{\PConf}{PConf}
\DeclareMathOperator{\SL}{SL}
\DeclareMathOperator{\GL}{GL}
\DeclareMathOperator{\PSL}{PSL}

\DeclareMathOperator{\Aff}{Aff}
\DeclareMathOperator{\Isom}{Isom}
\DeclareMathOperator{\orb}{orb}

\DeclareMathOperator{\Aut}{Aut}
\DeclareMathOperator{\Inn}{Inn}

\DeclareMathOperator{\Mod}{Mod}
\DeclareMathOperator{\crs}{crs}
\DeclareMathOperator{\ocrs}{\crs^\circ}

\DeclareMathOperator{\id}{id}
\DeclareMathOperator{\Out}{Out}

\numberwithin{equation}{section}

\newtheorem{theorem}[equation]{Theorem}
\newtheorem*{theorem*}{Theorem}
\newtheorem{lemma}[equation]{Lemma}

\newtheorem{proposition}[equation]{Proposition}

\theoremstyle{definition}
\newtheorem*{definition}{Definition}
\newtheorem{remark}[equation]{Remark}

\newtheorem*{question*}{Question}

\crefformat{section}{\S#2#1#3}
\crefmultiformat{section}{\S\S#2#1#3}{ and~#2#1#3}{, #2#1#3}{, and~#2#1#3}

\begin{document}

\maketitle

\begin{abstract}
  We show that, up to a natural equivalence relation, the only non-trivial, non-identity holomorphic maps $\Conf_n\C\to\Conf_m\C$ between unordered configuration spaces, where $m\in\{3,4\}$, are the resolving quartic map $R\colon\Conf_4\C\to\Conf_3\C$, a map $\Psi_3\colon\Conf_3\C\to\Conf_4\C$ constructed from the inflection points of elliptic curves in a family, and $\Psi_3\circ R$. This completes the classification of holomorphic maps $\Conf_n\C\to\Conf_m\C$ for $m\leq n$, extending results of Lin, Chen and Salter, and partially resolves a conjecture of Farb. We also classify the holomorphic families of elliptic curves over $\Conf_n\C$. To do this we classify homomorphisms between braid groups with few strands and $\PSL_2\Z$, then apply powerful results from complex analysis and Teichm{\"u}ller theory. Furthermore, we prove a conjecture of Castel about the equivalence classes of endomorphisms of the braid group with three strands.
\end{abstract}

\section{Introduction}

For a topological space $X$, let
\[
  \PConf_n X \coloneqq \{(x_1,\ldots,x_n)\in X^n : x_i\neq x_j \text{ for all } i\neq j \}
\]
be the \emph{ordered configuration space} of $n$ points in $X$. The symmetric group $S_n$ acts freely and properly discontinuously on $\PConf_nX$ by permuting coordinates, and the quotient
\[ \Conf_nX\coloneqq (\PConf_nX) / S_n \]
is the \emph{(unordered) configuration space} of $n$ points in $X$, and can be thought as the space of $n$-element subsets of $X$. If $X$ is a complex manifold, then so too are $\PConf_nX$ and $\Conf_nX$. In this paper we address some interesting special cases of the following fundamental question.

\begin{question*}
  \label{main-question}
  What are the holomorphic maps $\Conf_n\C\to\Conf_m\C$?
\end{question*}

An answer to this question can be thought of as a classification of the geometric constructions which take $n$ points in the complex plane, and produce $m$ points in the complex plane.

Lin answered this question when $n\geq5$ and $m\leq n$ \cite{Lin04a}, which was generalized by Chen and Salter to the cases with $n\geq5$ and $m\leq 2n$ \cite{CS23}. In order to state their results and our own, we first define a natural equivalence relation on the set such maps, whose name we take from Chen and Salter \cite{CS23}.

\begin{definition}[\textbf{Affine twist}]
  Let $f\colon Y\to Z$ be a holomorphic map between complex manifolds $Y$ and $Z$, and suppose the \emph{affine group}
  \[
    \Aff\coloneqq\{z\mapsto az+b : a\in\C^*, b\in\C \} \cong \C\rtimes\C^*.
  \]
  has a holomorphic action on $Z$. If $A\colon Y\to\Aff$ is a holomorphic map, then the \emph{affine twist} $f^A\colon Y\to Z$ of $f$ by $A$ is the holomorphic map given by
  \[
    f^A(y) \coloneqq A(y) \cdot f(y).
  \]
  We say that $f$ and $f^A$ are \emph{affine equivalent}.
\end{definition}

We let $\Aff$ act on $\Conf_n\C$ element-wise, and consider the holomorphic maps $\Conf_n\C\to\Conf_m\C$ up to affine equivalence.

Chen and Salter define the following class of examples of holomorphic maps $\Conf_n\C\to\Conf_m\C$ \cite[\S3]{CS23}.

\begin{definition}[\textbf{Root map}]
  If $k,p\geq1$ and $\epsilon\in\{0,1\}$, then a \emph{basic root map} is a map $r_{p,\epsilon}\colon\Conf_k\C^*\to\Conf_{kp+\epsilon}\C$ given by taking $p$th roots, and also including zero if $\epsilon=1$. A \emph{root map} $\Conf_n\C\to\Conf_{kp+\epsilon}$ is a composition of the form
  \[
    \Conf_n\C \to \Conf_k\C^* \xrightarrow{r_{p,\epsilon}} \Conf_{kp+\epsilon}\C,
  \]
  where the first map is an affine twist of a constant map $\Conf_n\C\to\Conf_k\C^*$. To make sense of the affine twist, we let $\Aff$ act on $\Conf_k\C^*$ element-wise with translations acting trivially.
\end{definition}

It's worth noting that constant maps are affine twists of root maps. In fact, Chen and Salter prove that affine twists of root maps are precisely the holomorphic maps $\Conf_n\C\to\Conf_m\C$ for which the induced map $\Conf_n\C\to\Conf_m\C/\Aff$ is constant \cite[\S3.3]{CS23}.

Now we are in a position to more precisely state what is known about holomorphic maps between unordered configuration spaces. Lin's results \cite[Theorems 1.4, 4.5, 4.8]{Lin04b} together with Chen and Salter's generalizations \cite[Theorem 3.1]{CS23} imply that if $n\geq5$ and $m\leq 2n$, then, up to affine equivalence, the only holomorphic maps $\Conf_n\C\to\Conf_m\C$ are root maps, and the identity map when $n=m$.

It is an open question whether all holomorphic maps $\Conf_n\C\to\Conf_m\C$ with $n\geq5$ are affine twists of either a root map or of the identity map. However, for $n=3,4$ there are many examples of holomorphic maps $\Conf_n\C\to\Conf_m\C$ that are not of this form.

The next map we describe is such an example, and arises from identifying a monic, square-free polynomial of degree $n$ over $\C$ with the set of its roots in $\Conf_n\C$. It has some interesting history: as an 18-year-old in 1540, Lodovico Ferrari discovered a method for solving a general quartic equation, by first solving a closely related cubic equation. This method gives rise to a holomorphic map often referred to as \emph{resolving the quartic}.

\begin{definition}[\textbf{Resolving the quartic}]
  Let $R\colon\Conf_4\C\to\Conf_3\C$ be the map that lifts to $\tilde{R}\colon\PConf_4\C\to\PConf_3\C$ given by
  \[
    \tilde{R}(x_1,x_2,x_3,x_4) \coloneqq (x_1x_4+x_2x_3,x_1x_3+x_2x_4,x_1x_2+x_3x_4).
  \]
\end{definition}

It is clear that $\tilde{R}$ defines a map $\PConf_4\C\to\C^3$, but it is quite remarkable that its image is contained in $\PConf_3\C$. Indeed, the cross ratio of the three points $\tilde{R}(x_1,x_2,x_3,x_4)$ and $\infty$ is equal to the cross ratio of $x_1,x_2,x_3$, and $x_4$.

Farb conjectured that, up to affine equivalence, any holomorphic map $\Conf_n\C\to\Conf_m\C$ with $n\geq4$ and $m\geq3$ is either a root map, the identity, or factors through the resolving quartic map $R\colon\Conf_4\C\to\Conf_3\C$ \cite[Conjecture 2.1]{Far23}. The aforementioned work of Lin \cite{Lin04a}, and Chen and Salter \cite{CS23} confirms this is the case in the ranges $n\geq5$, $m\leq 2n$.

Farb points out that this conjecture requires the assumption $n\geq4$. This is due to the existence of the following holomorphic maps. Consider the family of elliptic curves over $\Conf_3\C$ equipped with a planar embedding given by the equation
\begin{equation}
  \label{eq:elliptic-curves}
  y^2 = (x-x_1)(x-x_2)(x-x_3), \qquad \{x_1,x_2,x_3\}\in\Conf_3\C \tag{$\ast$}
\end{equation}
For $k\geq2$, let \emph{Jordan's totient function} $J_2(k)$ be the number of elementary $k$-torsion points on an elliptic curve, and let $m_k=J_2(k)/2$ if $k>2$, and $m_2=J_2(2)=3$.

\begin{definition}[\textbf{Elliptic curve construction}]
  For $k\geq2$, let $\Psi_k\colon\Conf_3\C\to\Conf_{m_k}\C$ be the holomorphic map given by
  \[
    \Psi_k(\{x_1,x_2,x_3\}) \coloneqq
    \left\{\begin{tabular}{l}
      $x$-coordinates of the elementary $k$-torsion \\[0.1em]
      points of $y^2=(x-x_1)(x-x_2)(x-x_3)$
    \end{tabular}\right\}.
  \]
\end{definition}

Although $\Psi_2\colon\Conf_3\C\to\Conf_3\C$ is just the identity map, one can show $\Psi_3\colon\Conf_3\C\to\Conf_4\C$ is a holomorphic map that does not lift to $\PConf_3\C\to\PConf_4\C$.

We're now in a position to state our main theorem. We complete the classification of holomorphic maps $\Conf_n\C\to\Conf_m\C$ for $m\leq n$. We also partially resolve Conjecture 2.1 of Farb \cite{Far23}.
 
\begin{theorem}
  \label{thm:main-theorem}
  If $n\geq3$ and $m\in\{3,4\}$, then, up to affine equivalence, every holomorphic map $\Conf_n\C\to\Conf_m\C$ is either
  \begin{itemize}
  \item A root map $\Conf_n\C\to\Conf_m\C$
  \item The identity map $\Conf_n\C\to\Conf_n\C$
  \item $R\colon\Conf_4\C\to\Conf_3\C$
  \item $\Psi_3\colon\Conf_3\C\to\Conf_4\C$
  \item $\Psi_3\circ R\colon\Conf_4\C\to\Conf_4\C$
  \end{itemize}
\end{theorem}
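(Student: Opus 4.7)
The plan is to classify holomorphic maps $\Conf_n\C\to\Conf_m\C$ through their induced homomorphisms on fundamental groups, and then to use complex-analytic rigidity to recover the maps themselves. For $n\geq 5$ the theorem follows directly from the results of Lin \cite{Lin04a} and Chen--Salter \cite{CS23} recalled above, so the substantive content is the case $n\in\{3,4\}$, $m\in\{3,4\}$.

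For these remaining cases, any holomorphic $f\colon\Conf_n\C\to\Conf_m\C$ induces (after choices of basepoints) a homomorphism $f_*\colon B_n\to B_m$ on braid groups, and since $\Conf_k\C$ is a $K(B_k,1)$, the map $f$ is determined by $f_*$ up to homotopy. The first main step is to classify all such homomorphisms up to a natural equivalence (conjugation together with twisting by a central character). For $n=3$ I would exploit the isomorphism $B_3/Z(B_3)\cong\PSL_2\Z$ to reduce the problem to one about homomorphisms out of $\PSL_2\Z$, which are controlled by the images of its order-$2$ and order-$3$ torsion generators; this is essentially Castel's conjectured description of $\End(B_3)$, whose proof is promised in the abstract. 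For $n=4$ I would exploit the relation between $B_4$ and the mapping class group of a punctured sphere to convert the problem into one about surface-group homomorphisms. The output should be a finite list of homomorphism classes, each represented by one of the maps listed in the theorem.

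The second main step is holomorphic rigidity: any two holomorphic maps $\Conf_n\C\to\Conf_m\C$ with equivalent induced homomorphisms must be affine equivalent. The quotient $\Conf_n\C/\Aff$ is naturally the moduli space $\M_{0,n+1}$ of $(n+1)$-punctured spheres (the extra puncture being $\infty$), and after an affine twist the map $f$ descends to a holomorphic map $\M_{0,n+1}\to\M_{0,m+1}$ which lifts to an $f_*$-equivariant holomorphic map between the corresponding Teichm\"uller spaces. Rigidity theorems for such equivariant holomorphic maps of Teichm\"uller spaces then force the lift, and hence $f$ itself, to be uniquely determined up to affine equivalence by $f_*$.

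The main obstacle will be the homomorphism classification, and in particular pinning down the non-obvious homomorphisms $B_3\to B_4$ corresponding to $\Psi_3$ and $\Psi_3\circ R$. Exhibiting $\Psi_3$ at all requires a genuinely geometric input, namely the inflection-point construction on the family $(\ast)$ of elliptic curves; showing it is the \emph{only} non-trivial, non-root exceptional possibility requires carefully enumerating how the torsion generators of $\PSL_2\Z$ can map into $B_4$, and then checking which of the resulting homomorphisms are actually realizable by a holomorphic map rather than merely a topological one.
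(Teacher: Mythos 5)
Your overall two-step architecture --- classify the induced homomorphisms $B_n\to B_m$ up to equivalence, then use Imayoshi--Shiga-type rigidity to show each realizable class is realized by at most one holomorphic map up to affine twist --- matches the paper's, and your rigidity step is correct in outline (the paper implements it via \Cref{thm:AAS18} and \Cref{prop:grothendieck}). But there is a genuine gap in the first step: you assert that the group-theoretic classification outputs ``a finite list of homomorphism classes, each represented by one of the maps listed in the theorem.'' This is false. For $n\in\{3,4\}$ and $m\geq3$ there are \emph{infinitely} many equivalence classes of homomorphisms $B_n\to B_m$ --- this is precisely \Cref{thm:equivalence-classes} of the paper; already for $n=m=3$ the endomorphisms $\alpha\mapsto g\alpha g^{-1}$, $\beta\mapsto\beta$ fall into infinitely many classes, indexed by the double cosets $\langle\beta\rangle g\langle\alpha\rangle$. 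So no purely algebraic enumeration of how the torsion generators of $\PSL_2\Z$ can map into $B_m$ will terminate in the five-item list; this infinitude is exactly why the methods that work for $n\geq5$ break down here.

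The missing idea is an analytic constraint on which homomorphisms can arise from a holomorphic map: by a consequence of Bers' work on families over a punctured disk (\Cref{thm:punctured-disk-family}, packaged as \Cref{prop:multitwist}), a holomorphic $f$ forces each $f_*(\sigma_i)$ to have a positive power that is a multitwist. Your closing remark about ``checking which of the resulting homomorphisms are actually realizable by a holomorphic map'' gestures at this, but supplies no mechanism, and without one the classification cannot be completed. With the multitwist constraint (together with \Cref{thm:DW07}, which forces $f_*$ to be non-cyclic and irreducible when $f$ is not an affine twist of a root map), one shows for $m=3$ that the $f_*(\sigma_i)$ become parabolic in $\PSL_2\Z$, which pins down $f_*$ uniquely (\Cref{thm:b3-to-parabolic}); for $m=4$ the analogous step (\Cref{thm:b3-to-b4-pseudoperiodic}) is genuinely delicate and reduces, via Gassner's splitting $B_4\cong F_2\rtimes B_3$, to solving the equation $f^{6k+1}(w)f^{-6k-1}(w)=w$ in a rank-$2$ free group, which occupies an entire section of the paper. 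Any complete proof must contain some substitute for these ingredients.
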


It's worth noting that the composition $R\circ\Psi_3\colon\Conf_3\C\to\Conf_3\C$ happens to be affine equivalent to a root map.

Combined with the results of Lin \cite{Lin04a}, and Chen and Salter \cite{CS23}, \Cref{thm:main-theorem} completes the classification of holomorphic maps $\Conf_n\C\to\Conf_m\C$ for $m\leq n$.

Our second theorem concerns geometric constructions of elliptic curves. Let $\M_{g,n}$ be the moduli space of genus $g$ Riemann surfaces with $n$ unordered marked points. Then $\M_{1,1}\cong\SL_2\Z\backslash\HH$ is the moduli space of elliptic curves. Let the \emph{hyperelliptic map}
\[
  H \colon \Conf_3\C \to \M_{1,1}
\]
be the holomorphic map corresponding to the family of elliptic curves \eqref{eq:elliptic-curves}, forgetting the planar embedding. We think of $\M_{1,1}$ as a complex orbifold, so that holomorphic maps $Y\to\M_{1,1}$, where $Y$ is a complex manifold, bijectively correspond to isomorphism classes of families of elliptic curves over $Y$. We may forget the orbifold structure of $\M_{1,1}$ by using the $j$-invariant $j\colon\M_{1,1}\to\C$. If $f,g\colon Y\to\M_{1,1}$ are holomorphic families of elliptic curves, then $j\circ f=j\circ g$ if and only if for all $y\in Y$ the fibers over $y$ in each family are isomorphic.

Let $\Delta\colon\Conf_n\C\to\C^*$ be the \emph{discriminant}
\[
  \Delta(\{x_1,\ldots,x_n\}) \coloneqq \prod_{i\neq j}(x_i-x_j).
\]
We can affine twist by the discriminant by viewing $\C^*$ as a subgroup of $\Aff$.

\begin{theorem}
  \label{thm:family-elliptic-curves}
  Let $f\colon\Conf_n\C\to\M_{1,1}$ be a holomorphic family of elliptic curves over $\Conf_n\C$. Let $\id^{\Delta}$ be the affine twist of the identity on $\Conf_3\C$ by the discriminant $\Delta$. Then one of the following holds:
  \begin{itemize}
  \item $j\circ f$ is constant,
  \item $n=3$ and $f$ is $H$ or $H\circ\id^\Delta$,
  \item $n=4$ and $f$ is $H\circ R$ or $H\circ\id^\Delta\circ R$.
  \end{itemize}
\end{theorem}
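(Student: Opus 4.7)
The plan is to reduce \Cref{thm:family-elliptic-curves} to \Cref{thm:main-theorem} by constructing a Weierstrass lift for any family with nonconstant $j$-invariant. If $j \circ f$ is constant we are in the first case, so assume otherwise. The aim is to produce a holomorphic map $g : \Conf_n\C \to \Conf_3\C$ such that $f$ equals either $H \circ g$ or its quadratic twist $H \circ \id^\Delta \circ g$ as a map to the orbifold $\M_{1,1}$.

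To build $g$, pick a representative family $\pi : \mathcal{E} \to \Conf_n\C$ of $f$. The fiberwise elliptic involution glues into a global involution $\iota$ of $\mathcal{E}$, whose quotient $P := \mathcal{E}/\iota$ is a $\CP^1$-bundle on $\Conf_n\C$; the relative fixed locus of $\iota$ is the $2$-torsion subscheme, producing four pairwise disjoint sections $s_O, s_1, s_2, s_3$ of $P$, where $s_O$ is the image of the zero section. Trivializing $P$ so that $s_O \equiv \infty$ turns $\{s_1, s_2, s_3\}$ into the desired map $g : \Conf_n\C \to \Conf_3\C$. Such a trivialization is precisely a Weierstrass model; its obstruction is a holomorphic line bundle on $\Conf_n\C$, which by the Oka principle on the Stein manifold $\Conf_n\C$ is classified by $H^2(B_n;\Z)$. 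A cohomological computation shows the square of this obstruction is trivial, so at most one quadratic twist suffices. On $\Conf_3\C$ this twist is realized by $\id^\Delta$: $\Delta$ admits a square root $\prod_{i<j}(x_i - x_j)$ (up to a constant) on $\PConf_3\C$ but is anti-symmetric under $S_3$, hence has no square root on $\Conf_3\C$, exactly matching the $\Z/2$-obstruction coming from the sign character $B_n \to S_n \to \Z/2$.

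Once $g : \Conf_n\C \to \Conf_3\C$ is in hand, \Cref{thm:main-theorem} forces $g$ to be a root map, the identity (possible only for $n = 3$), or $R$ (possible only for $n = 4$). Root maps, however, yield only isotrivial families: the affine-twist-of-constant cases land in a single $\Aff$-orbit on $\Conf_3\C$ that maps to a single point of $\M_{1,1}$ (since $H$ is $\Aff$-invariant on isomorphism classes of elliptic curves); the basic cube-root map gives $y^2 = x^3 - z$ with $j = 0$; and $z \mapsto \{\sqrt{z}, -\sqrt{z}, 0\}$ gives $y^2 = x^3 - zx$ with $j = 1728$. Each contradicts the nonconstancy of $j \circ f$, so $g = \id$ or $g = R$, producing the four explicit families in the theorem. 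The main technical obstacle is the obstruction-theoretic step in the previous paragraph: establishing the trivialization of $P$ and matching the resulting $\Z/2$-ambiguity precisely with the twist by $\Delta$.
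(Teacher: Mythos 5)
Your route is genuinely different from the paper's. The paper first classifies holomorphic maps $\Conf_n\C\to\PSL_2\Z\backslash\HH$ (\Cref{thm:hol-to-H/PSL2Z}) via the group theory of homomorphisms $B_n\to\PSL_2\Z$ with parabolic images of the $\sigma_i$, which disposes of $n\geq5$ immediately through \Cref{thm:BntoG} and \Cref{thm:DW07}, and then counts lifts to $\M_{1,1}$ using $H^1(B_n;\Z/2)\cong\Z/2$. You instead try to manufacture an honest map $g\colon\Conf_n\C\to\Conf_3\C$ from the $2$-torsion of the family and then invoke \Cref{thm:main-theorem}. Your final step (the $\Z/2$ of quadratic twists, realized by precomposition with $\id^\Delta$) coincides with the paper's, and your elimination of root maps is correct for the right reason: $j\circ H$ factors through $\Conf_3\C/\Aff$, and a root map becomes constant there.

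The gap is in the trivialization step. Before $\{s_1,s_2,s_3\}$ defines a map to $\Conf_3\C$ (rather than to a twisted form of it), the affine bundle $P\setminus s_O$ must be trivialized, and on the Stein manifold $\Conf_n\C$ this reduces by Grauert's Oka principle to the vanishing of a line-bundle class in $H^2(B_n;\Z)$, which is $\Z/2$ for $n\geq4$. Your sentence ``the square of this obstruction is trivial, so at most one quadratic twist suffices'' does not close this: a nontrivial $2$-torsion line bundle obstructs the \emph{existence} of $g$ outright, and quadratic twists of $\E$ alter the gerbe class in $H^1(B_n;\Z/2)$, not the line-bundle class in $H^2$ --- these two $\Z/2$'s are unrelated. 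What actually saves the argument is that the underlying line bundle of $P\setminus s_O$ is, up to duality, $\omega^{\otimes2}$ for $\omega$ the Hodge bundle (the quotient $\E\to\E/\iota$ is ramified along the zero section, squaring the normal bundle), so its Chern class is $2c_1(\omega)=0$ in the $2$-torsion group $H^2(B_n;\Z)$ and the trivialization exists. You must supply this identification explicitly; without it the cases $n\geq4$, and in particular the exclusion of $n\geq5$, are not established. A smaller omission: having classified $g$ only up to affine equivalence, you still need that $H\circ g^A$ for $A\colon\Conf_n\C\to\Aff$ lies in $\{H\circ g,\ H\circ\id^\Delta\circ g\}$; this holds because rescaling $x$ by a unit $a$ twists the family by the class of $a$ modulo squares of units, which again lands in the same $\Z/2$, but it should be said.
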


Although $j\circ H=j\circ H\circ\id^{\Delta}$, the families $H$ and $H\circ\id^{\Delta}$ have different monodromy. A similar phenomenon occurs in a theorem of Chen and Salter. They prove that if $\M_g\coloneqq\M_{g,0}$, then the hyperelliptic family $\Conf_n\C\to\M_{\lfloor(n-1)/2\rfloor}$ is the only non-trivial holomorphic family $\Conf_n\C\to\M_g$ for $n\geq26$ and $g\leq n-2$ up to precomposing with $\id^\Delta\colon\Conf_n\C\to\Conf_n\C$, see \cite[\S3.5]{CS23}.

There is also a hyperelliptic family of genus one curves over $\Conf_4\C$, given by the equation
\[
  y^2=(x-x_1)(x-x_2)(x-x_3)(x-x_4).
\]
By taking the Jacobian of the underlying genus one curves in this family, we obtain a family $\Conf_4\C\to\M_{1,1}$ of elliptic curves. One can show that this is isomorphic to the family $H\circ R$.

To prove \Cref{thm:main-theorem,thm:family-elliptic-curves} we first classify the homomorphisms between the relevant fundamental groups.

The \emph{braid group on $n$ strands of $X$}, where $X$ is a topological space, is the fundamental group
\[
  B_n(X) \coloneqq \pi_1(\Conf_nX).
\]

The \emph{braid group on $n$ strands} is $B_n\coloneqq B_n(\C)$. Lin classified the homomorphisms $B_n\to B_m$ for $n\geq5$ and $m\leq n$ \cite{Lin04b}. These results were extended by Castel who handled the cases $n\geq6$, $m\leq n+1$ \cite{Cas16}, and finally Chen, Kordek, and Margalit extended this to $n\geq5$, $m\leq 2n$ \cite{CKM23}.

Chen, Kordek, and Margalit's classification is in terms of a natural equivalence relation on the set of homomorphisms $B_n\to B_m$ that we define in \Cref{sec:equivalence-classes}. This equivalence relation is roughly analogous to affine equivalence for holomorphic maps. They show that if $n\geq5$ and $m\leq 2n$, then there are at most five equivalence classes of homomorphisms $B_n\to B_m$, and they give explicit representatives of these equivalence classes \cite[Theorem 1.1]{CKM23}.

We prove the following theorem which stands in stark contrast to these results. The special case $m=n=3$ is a conjecture of Castel \cite[Conjecture 14.2]{Cas09}.

\begin{theorem}
  \label{thm:equivalence-classes}
  If $n\in\{3,4\}$ and $m\geq3$, then there are infinitely many equivalence classes of homomorphisms $B_n\to B_m$.
\end{theorem}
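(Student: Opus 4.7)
The plan is to exhibit, for each pair $(n, m)$ with $n \in \{3,4\}$ and $m \geq 3$, an infinite family of pairwise inequivalent homomorphisms $B_n \to B_m$. The heart of the argument is the case $n = m = 3$, which is Castel's conjecture; given infinitely many inequivalent endomorphisms of $B_3$, the remaining cases will follow by pre-composition with the surjection $R_* \colon B_4 \twoheadrightarrow B_3$ induced by the resolving quartic (for $n = 4$) and by post-composition with the natural inclusion $B_3 \hookrightarrow B_m$ (for $n = 3$, $m > 3$), modulo a routine compatibility check of the equivalence relation defined in \Cref{sec:equivalence-classes}.

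To prove Castel's conjecture I would exploit the central extension $1 \to Z(B_3) \to B_3 \xrightarrow{\pi} \PSL_2\Z \to 1$ together with the free-product structure $\PSL_2\Z \cong \Z/2 \ast \Z/3$. A homomorphism $\PSL_2\Z \to \PSL_2\Z$ is specified by an order-$2$ element $s$ and an order-$3$ element $t$ in the target; since $\PSL_2\Z$ is a free product of finite cyclic groups, the centralizer of $s$ is the cyclic group $\langle s\rangle$ of order $2$, while the conjugacy class of order-$3$ elements is infinite. Dividing out by simultaneous conjugation therefore leaves infinitely many equivalence classes in $\mathrm{Hom}(\PSL_2\Z, \PSL_2\Z)$, and hence infinitely many conjugacy classes in $\mathrm{Hom}(B_3, \PSL_2\Z)$ after pulling back through $\pi$.

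The remaining ingredient is to lift each such $\phi \colon B_3 \to \PSL_2\Z$ to an endomorphism $\tilde\phi \in \End(B_3)$ with $\pi \circ \tilde\phi = \phi$. Using the presentation $B_3 = \langle x, y \mid x^2 = y^3 \rangle$ with $x = \sigma_1\sigma_2\sigma_1$ and $y = \sigma_1\sigma_2$, we may choose arbitrary lifts $\tilde x, \tilde y \in B_3$ of $\phi(x), \phi(y)$; the discrepancy $\tilde x^2\tilde y^{-3}$ then lies in $\ker \pi = \langle\Delta\rangle$, say equal to $\Delta^c$. Since $\gcd(2,3) = 1$, one can solve $3e - 2d = c$ in integers, and replacing $(\tilde x, \tilde y)$ by $(\tilde x\Delta^d, \tilde y\Delta^e)$ produces a valid lift. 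Pairwise inequivalence of the resulting $\tilde\phi$'s follows because the equivalence relation on $\End(B_3)$---conjugation together with twisting by a central character---descends under $\pi$ to an equivalence on $\mathrm{Hom}(B_3, \PSL_2\Z)$ no finer than conjugation (the central twist becomes trivial downstairs, while conjugation in $B_3$ becomes conjugation in $\PSL_2\Z$), and our $\phi$'s were chosen inequivalent.

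The main obstacle is the descent argument, particularly for the bootstrapping step with $(n, m) \neq (3, 3)$. One must check that no extra equivalences arise in the target $B_m$ when post-composing with $\iota \colon B_3 \hookrightarrow B_m$, since the centralizer of $\iota(B_3)$ in $B_m$ is strictly larger than $Z(B_m)$ when $m > 3$, and similarly that surjectivity of $R_*$ is enough to propagate inequivalence along pre-composition. Both of these reduce to structural facts about centralizers in braid groups that are established as part of the broader classification developed in the paper.
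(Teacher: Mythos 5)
Your overall strategy coincides with the paper's: reduce everything to Castel's case $n=m=3$, count there using the free product $\PSL_2\Z\cong\Z/2*\Z/3$, and propagate via pre-composition with $R_*$ and post-composition with $B_3\hookrightarrow B_m$. Your core count is correct and is the same count as the paper's: fixing the order-$2$ image $s=b$ up to conjugacy, the residual conjugation freedom is the finite group $C(s)=\langle b\rangle$, while the conjugates of $a$ biject with the infinitely many cosets $g\langle a\rangle$; this is exactly the paper's statement that the double cosets $\langle\beta\rangle g\langle\alpha\rangle$ are infinite in number (\Cref{lem:bijective-double}). Your lifting step (adjusting lifts of $x,y$ by central elements using $\gcd(2,3)=1$) is fine, and the descent of equivalence through $\pi$ works because the centralizer in $\PSL_2\Z$ of a non-cyclic image is trivial, so transvections die downstairs; note only that automorphisms of $B_3$ descend to possibly \emph{outer} automorphisms of $\PSL_2\Z$, which is harmless since $\Out(\PSL_2\Z)$ is finite (and your phrase ``no finer than conjugation'' should read ``no coarser'').

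The one genuine shortfall is the step you yourself flag as the main obstacle: that post-composition with $B_3\hookrightarrow B_m$ for $m>3$ does not collapse equivalence classes. This is not a routine compatibility check, and it is the most technical part of the paper's argument (\Cref{lem:no-collapsing}). The difficulty is exactly the one you name: in $B_m$ the transvecting homomorphisms may land in the full centralizer $C_{B_m}(\varphi^t(B_3))$, which is a priori much larger than $Z(B_m)$, and an automorphism of $B_m$ need not preserve the standard $B_3$. The paper resolves this by observing that $\varphi([B_3,B_3])$ contains elements whose canonical reduction system in $\D_m$ is the single curve $\gamma$ enclosing the three relevant marked points, then invoking the Gonz\'alez-Meneses--Wiest centralizer theorem to pin down $C_{B_m}(\varphi^t(B_3))\leq Z(B_3)\times B_{1,m-3}$, and finally arguing that any automorphism returning the image to $B_3$ must preserve $\gamma$, forcing the extra transvection to be trivial. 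Your proposal gestures at ``structural facts about centralizers'' but supplies no argument; as written this step is missing rather than proved. (By contrast, your treatment of pre-composition with $R_*$ is adequate: cyclic homomorphisms from $B_4$ factor through the abelianization, which $R_*$ matches, so surjectivity of $R_*$ really does suffice there.)
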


The ubiquity of homomorphisms in these cases reflects that previously established classification methods break down in the cases we consider, both in the group theoretic and holomorphic settings.

Indeed, Chen and Salter's classification \cite[Theorem 3.1]{CS23} uses techniques from Teichm{\"u}ller theory and complex analysis to show that at most two of Chen, Kordek, and Margalit's five equivalence classes can be represented by holomorphic maps. Since both of these are in fact represented by holomorphic maps, they are able to apply a theorem of Imayoshi and Shiga \cite{IS88} to complete their classification.

Our proofs require, in addition to the techniques of Chen and Salter, a well-known consequence of work of Bers \cite{Ber78}, which concerns the monodromy of families of Riemann surfaces over a punctured disk, see \Cref{thm:punctured-disk-family}. This fact imposes a constraint on homomorphisms $B_n\to B_m$ that can arise from a holomorphic map. When $n\geq5$ this constraint is automatically satisfied for all known homomorphisms with non-cyclic image. However, when $n\in\{3,4\}$ there are infinitely many inequivalent examples where it is not satisfied.

To prove \Cref{thm:main-theorem} we classify the homomorphisms $B_n\to B_m$ for $m\in\{3,4\}$ that satisfy the constraint imposed by \Cref{thm:punctured-disk-family} and the other constraints used by Chen and Salter \cite{CS23}. Our methods rely on special properties of the braid groups $B_3$ and $B_4$. To handle homomorphisms $B_n\to B_4$ we solve a delicate equation in the free group of rank 2 --- this is the sole purpose of \Cref{sec:f2}.

To prove \Cref{thm:family-elliptic-curves}, many of the considerations for \Cref{thm:main-theorem} also apply, since the orbifold fundamental group $\pi_1^{\orb}(\M_{1,1})\cong\SL_2\Z$ is a central quotient of $B_3$.

\subsection{Structure of the paper}

In \Cref{sec:homfromBn} we classify homomorphisms $B_3\to\PSL_2\C$, and use this to classify homomorphisms $B_n\to\PSL_2\Z$ and $B_n\to B_3$. We discuss Nielsen--Thurston theory in \Cref{sec:Nielsen-Thurston} and use it in \Cref{sec:equivalence-classes} to prove \Cref{thm:equivalence-classes}, in particular proving a conjecture of Castel.
In \Cref{sec:B3toB3andB4} we use the Nielsen--Thurston classification theorem to classify the homomorphisms $B_3\to B_4$ and $B_4\to B_4$. In \Cref{sec:f2} we prove a property of a particular automorphism of the free group of rank 2. We use many of our group theoretic results, together with results on holomorphic maps we discuss in \Cref{sec:conf-spaces}, to prove \Cref{thm:main-theorem,thm:family-elliptic-curves}.

\subsection{Acknowledgements}

We especially thank Benson Farb for suggesting the use of the Nielsen--Thurston classification theorem, pointing out some important applications of Teichm{\"u}ller theory, and for extensive comments. We also thank Lei Chen and Nick Salter whose work inspired this paper. Furthermore, we thank all the previously mentioned people as well as Ishan Banerjee, Rose Elliott Smith, Trevor Hyde, Eduard Looijenga, Dan Margalit, Joshua Mundinger, Carlos A. Serv{\'a}n, Claire Voisin, and Nicholas Wawrykow for their helpful comments and discussions.

\section{Homomorphisms $B_n\to\PSL_2\Z$ and $B_n\to B_3$}\label{sec:homfromBn}

We begin by establishing some relevant facts about the braid groups. Artin's presentation of $B_n$ is
\begin{align*}
  \label{eq:Bn-presentation}
  \begin{split}
    B_n = \langle \sigma_1,\ldots,\sigma_{n-1} \mid{}& \sigma_i\sigma_{i+1}\sigma_i = \sigma_{i+1}\sigma_i\sigma_{i+1}, \quad 1\leq i < n-1 \\
                                                     & \sigma_i\sigma_j=\sigma_j\sigma_i, \quad 1\leq i<j-1<n-1 \rangle.
  \end{split}
\end{align*}

The holomorphic map $R\colon\Conf_4\C\to\Conf_3\C$ induces the homomorphism $R_*\colon B_4\twoheadrightarrow B_3$ given by $\sigma_1,\sigma_3\mapsto\sigma_1$ and $\sigma_2\mapsto\sigma_2$.

From Artin's presentation it can be seen that $B_3=\langle\alpha,\beta\mid\alpha^3=\beta^2\rangle$, where $\alpha=\sigma_1\sigma_2$ and $\beta=\sigma_1\sigma_2\sigma_1$. Hence $Z(B_3)=\langle\alpha^3\rangle=\langle\beta^2\rangle$, and there is a short exact sequence
\[
  1 \to Z(B_3) \to B_3 \to \PSL_2\Z \to 1.
\]

Bearing these facts in mind, we can state the two main theorems to be shown in this section.
\begin{theorem}\label{thm:BntoG}
  If $\varphi\colon B_n\to \PSL_2\Z$ is a homomorphism, then the following hold.
  \begin{enumerate}[label=(\roman*)]
  \item If $n\geq5$, then $\varphi$ is cyclic.
  \item If $n=4$, then $\varphi$ factors through $R_*\colon B_4\twoheadrightarrow B_3$.
  \item If $n=3$ and $\varphi$ is non-cyclic, then $\varphi$ descends to an endomorphism of $\PSL_2\Z$.
  \end{enumerate}
\end{theorem}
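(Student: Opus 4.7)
The plan is to prove the three parts in the order (iii), (ii), (i), using throughout that $\PSL_2\Z \cong \Z/2 * \Z/3$ and hence every element of $\PSL_2\Z$ has order $1$, $2$, $3$, or infinite. For (iii), a homomorphism $\varphi\colon B_3 \to \PSL_2\Z$ is determined by a pair $A = \varphi(\alpha)$, $B = \varphi(\beta)$ with $A^3 = B^2$; let $Z$ denote this common value. Case-checking on the order of the base element, one sees $A^3$ has order in $\{1, 2, \infty\}$ and $B^2$ has order in $\{1, 3, \infty\}$, so $Z$ has order $1$ or is of infinite order. If $Z$ has infinite order then both $A$ and $B$ lie in the centralizer of $Z$; the action of $\Z/2 * \Z/3$ on its Bass--Serre tree shows that the centralizer of any infinite-order element is infinite cyclic, so the image of $\varphi$ is cyclic. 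Otherwise $Z = 1$, so $\varphi$ kills $Z(B_3) = \langle \alpha^3 \rangle$ and descends to an endomorphism of $\PSL_2\Z = B_3 / Z(B_3)$.

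For (ii), I would take $\varphi\colon B_4 \to \PSL_2\Z$ and restrict it to the two standard $B_3$ subgroups $\langle \sigma_1, \sigma_2 \rangle$ and $\langle \sigma_2, \sigma_3 \rangle$, to each of which (iii) applies. A key observation is that commuting elements which also satisfy the braid relation are equal: if $xy = yx$ and $xyx = yxy$, then $x^2 y = yxy$, hence $x^2 = yx$ and $x = y$. When at least one restriction is cyclic, two applications of this principle (using $\sigma_1\sigma_3 = \sigma_3\sigma_1$ to propagate through the middle generator) force $\varphi(\sigma_1) = \varphi(\sigma_2) = \varphi(\sigma_3)$, so $\varphi$ is cyclic. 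In the remaining case both restrictions descend to endomorphisms $\psi_1, \psi_2$ of $\PSL_2\Z$; parametrizing each by the images $(s_i, t_i)$ of the order-$2$ and order-$3$ generators, the gluing constraint $\psi_1(\bar\sigma_2) = \psi_2(\bar\sigma_1)$ and the commutation $[\psi_1(\bar\sigma_1), \psi_2(\bar\sigma_2)] = 1$ yield word equations in $\Z/2 * \Z/3$ whose analysis gives $\varphi(\sigma_1) = \varphi(\sigma_3)$, which is precisely the condition for $\varphi$ to factor through $R_*\colon B_4 \twoheadrightarrow B_3$.

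For (i) with $n \geq 5$, I would apply (ii) to each $B_4$ subgroup $\langle \sigma_i, \sigma_{i+1}, \sigma_{i+2} \rangle$ to obtain $\varphi(\sigma_i) = \varphi(\sigma_{i+2})$ for every admissible $i$. Since $n \geq 5$ the generators $\sigma_1$ and $\sigma_4$ commute, so $\varphi(\sigma_1)$ and $\varphi(\sigma_4) = \varphi(\sigma_2)$ commute; the braid relation $\sigma_1\sigma_2\sigma_1 = \sigma_2\sigma_1\sigma_2$ and the principle above then give $\varphi(\sigma_1) = \varphi(\sigma_2)$, so all the $\varphi(\sigma_j)$ coincide and $\varphi$ is cyclic.

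I expect the hardest step to be the ``both restrictions descend'' subcase of (ii). No single commutation or conjugacy relation on its own suffices; one must simultaneously constrain two endomorphisms of $\PSL_2\Z$ that agree at $\bar\sigma_2$ and satisfy a commutation on their images of $\bar\sigma_1, \bar\sigma_2$, and solve the resulting word equations in the free product $\Z/2 * \Z/3$. This algebraic bottleneck is what motivates the preceding detailed classification of homomorphisms $B_3 \to \PSL_2\C$ and its specialization to $\PSL_2\Z$.
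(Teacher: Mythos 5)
Your treatment of (iii) and (i) is sound. For (iii) you argue inside $\PSL_2\Z\cong\Z/2*\Z/3$ via element orders and the fact that centralizers of infinite-order elements are infinite cyclic, whereas the paper works in $\PSL_2\C$ and uses fixed-point sets of M\"obius transformations (\Cref{lem:B3-to-PSL2C}); both are correct. Your argument for (i) is essentially the paper's appeal to \Cref{lem:cyclic-homomorphism}(ii), rederived by hand.

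The genuine gap is in (ii), in the case where both restrictions to the standard copies of $B_3$ are non-cyclic. There you reduce to ``word equations in $\Z/2*\Z/3$ whose analysis gives $\varphi(\sigma_1)=\varphi(\sigma_3)$'' but never carry out that analysis, and this is exactly where the content of (ii) lies. The paper's route uses an input you never mention: $\sigma_1$ and $\sigma_3$ are \emph{conjugate} in $B_4$, so $\varphi(\sigma_1)$ and $\varphi(\sigma_3)$ are commuting and conjugate elements of $\PSL_2\Z$, hence equal or inverse to each other (\Cref{lem:comm-conj}, proved by comparing axes and fixed points); the inverse case is then excluded because, by \Cref{prop:End-PSL2Z}, the common image of the $\sigma_i$ in the abelianization $\Z/6$ of $\PSL_2\Z$ is a generator, hence not $2$-torsion. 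Your framework can be completed along the same lines using tools you already deploy in (iii): the images $\varphi(\sigma_i)$ have infinite order (their image in $\Z/6$ generates), so commuting forces $\varphi(\sigma_1),\varphi(\sigma_3)$ to lie in a common infinite cyclic subgroup $\langle w\rangle$, and conjugacy forces equal translation lengths on the Bass--Serre tree, giving $\varphi(\sigma_3)=\varphi(\sigma_1)^{\pm1}$, after which the abelianization computation finishes. But as written the decisive step is asserted rather than proved, and the two ingredients that make it work --- the conjugacy of $\sigma_1$ and $\sigma_3$ in $B_4$, and the order-$6$ abelianization constraint --- are absent from your proposal.
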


Since $\PSL_2\Z\cong\Z/2*\Z/3$, it is possible to classify its many endomorphisms, see \Cref{prop:End-PSL2Z}. This gives a classification of homomorphisms $B_n\to\PSL_2\Z$.

We also classify the homomorphisms $B_3\to\PSL_2\Z$ which send $\sigma_1$ and $\sigma_2$ to parabolic elements of $\PSL_2\Z$. This additional constraint will be relevant in \Cref{sec:conf-spaces} where we classify holomorphic maps.

We write $\begin{bsmallmatrix}a&b\\c&d\end{bsmallmatrix}$ for the image of $\begin{psmallmatrix}a&b\\c&d\end{psmallmatrix}\in\SL_2\Z$ in $\PSL_2\Z$.

\begin{theorem}
  \label{thm:b3-to-parabolic}
  If $\varphi\colon B_3\to\PSL_2\Z$ is a non-cyclic homomorphism with $\varphi(\sigma_1)$ and $\varphi(\sigma_2)$ parabolic, then it is given by composing the quotient map $B_3\twoheadrightarrow B_3/Z(B_3)$, with some isomorphism $B_3/Z(B_3)\cong\PSL_2\Z$. In other words, up to automorphisms of $\PSL_2\Z$ it is given by
  \[
    \varphi(\sigma_1) = \begin{bmatrix} 1 & 1 \\ 0 & 1 \end{bmatrix}, \qquad \varphi(\sigma_2) = \begin{bmatrix} 1 & 0 \\ -1 & 1 \end{bmatrix}.
  \]
\end{theorem}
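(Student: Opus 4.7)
The plan is to use \Cref{thm:BntoG}(iii) to reduce the question to classifying non-cyclic endomorphisms $\psi$ of $\PSL_2\Z$ under which the image of $\sigma_1$ remains parabolic, and then to show that any such $\psi$ must be an automorphism.

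Since $\varphi$ is non-cyclic, \Cref{thm:BntoG}(iii) gives a factorization $\varphi = \psi \circ \pi$, where $\pi\colon B_3 \twoheadrightarrow B_3/Z(B_3) \cong \PSL_2\Z$ is the quotient and $\psi$ is an endomorphism of $\PSL_2\Z$. By \Cref{prop:End-PSL2Z}, a non-cyclic $\psi$ of $\PSL_2\Z = \langle\alpha\rangle * \langle\beta\rangle$ satisfies $\psi(\alpha) = u\alpha^{\epsilon}u^{-1}$ and $\psi(\beta) = w\beta w^{-1}$ for some $u, w \in \PSL_2\Z$ and $\epsilon \in \{\pm 1\}$. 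Because parabolicity is a conjugation invariant, post-composing with the inner automorphism by $w^{-1}$ reduces to the case $\psi(\beta) = \beta$ and $\psi(\alpha) = V\alpha^{\epsilon}V^{-1}$ for $V = w^{-1}u$. Since $\pi(\sigma_1) = \alpha^{-1}\beta$, the parabolicity of $\varphi(\sigma_1)$ becomes the requirement that $V\alpha^{-\epsilon}V^{-1}\beta$ be parabolic in $\PSL_2\Z$.

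I would then translate this constraint into a trace equation by lifting to $\SL_2\Z$. Letting $V$ have entries $a,b,c,d$ with $ad-bc=1$, and using cyclic invariance of trace together with the standard matrix lifts of $\alpha$ and $\beta$, a direct computation reduces the parabolicity condition to the Diophantine equation
\[
  a^2 + ab + b^2 + c^2 + cd + d^2 = 2.
\]
The form $a^2 + ab + b^2$ is the norm form of the Eisenstein integers and takes only non-negative integer values, never equal to $2$ (one checks that $4(a^2+ab+b^2) = (2a+b)^2 + 3b^2$ cannot equal $8$ over $\Z$). Hence both summands must equal $1$, restricting $(a,b)$ and $(c,d)$ each to six possibilities, and combined with the determinant condition and reduction mod $\pm I$ this leaves only the six candidates $V \in \{I,\alpha,\alpha^{-1},\beta,\sigma_2,T^{-1}\}$ in $\PSL_2\Z$.

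Finally, I would verify case-by-case that each such $V$ makes $\psi$ an automorphism: absorbing the centralizer $\langle\alpha\rangle$ into the description, $\psi$ is the identity, the inner automorphism by $\beta$, the outer automorphism $\alpha\mapsto\alpha^{-1}$, or a composition of these. Therefore $\varphi = \psi \circ \pi$ is the quotient $B_3 \twoheadrightarrow B_3/Z(B_3)$ composed with an automorphism of $\PSL_2\Z$, which is exactly the stated form. The main obstacle is the Diophantine step; once one confirms that $a^2 + ab + b^2$ never equals $2$ over $\Z$, the remaining case analysis is routine book-keeping.
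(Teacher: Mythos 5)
Your argument is correct, and it takes a genuinely different route from the paper's. You reduce via \Cref{thm:BntoG}(iii) and \Cref{prop:End-PSL2Z} to Kurosh-type endomorphisms $\psi$ of $\PSL_2\Z$, normalize $\psi(b)=b$, and then impose parabolicity of $\psi(a^{-1}b)=Va^{-\epsilon}V^{-1}b$ as a trace condition. I checked the computation: with the standard lifts $a\mapsto\begin{psmallmatrix}0&1\\-1&1\end{psmallmatrix}$, $b\mapsto\begin{psmallmatrix}0&1\\-1&0\end{psmallmatrix}$ the trace is $\pm(a^2+ab+b^2+c^2+cd+d^2)$ for $\epsilon=\pm1$, so your Diophantine equation is right; the Eisenstein norm form indeed never represents $2$, forcing both summands to be $1$; and the six resulting $V$ are exactly the two cosets $\langle a\rangle$ and $b\langle a\rangle$ of $C_{\PSL_2\Z}(a)=\langle a\rangle$, so $\psi$ is one of four automorphisms. (There is no circularity: \Cref{thm:BntoG}(iii) is just \Cref{lem:B3-to-PSL2C}, which precedes the theorem.) The paper instead works directly with the generators: it normalizes $\varphi(\sigma_1)=\begin{bsmallmatrix}1&x\\0&1\end{bsmallmatrix}$, conjugates by the translation $T_\eta\in\PSL_2\Q$ carrying the fixed point $\eta$ of $\varphi(\sigma_2)$ to $0$, uses integrality of the determinant and entries to conclude $\varphi(\sigma_2)$ becomes $\begin{bsmallmatrix}1&0\\y&1\end{bsmallmatrix}$ with $y\in\Z$, and then the braid relation gives $xy+1=0$, hence $(x,y)=(1,-1)$ and $\eta\in\Z$. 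Your route buys a structural classification (and shows in passing that the hypothesis on $\varphi(\sigma_2)$ is redundant, as it must be since $\sigma_2$ is conjugate to $\sigma_1$), at the cost of invoking Kurosh and an arithmetic fact about $a^2+ab+b^2$; the paper's route is shorter and entirely elementary, using only the braid relation and integrality. Both are complete proofs.
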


Note that the quotient map $B_3\twoheadrightarrow\PSL_2\Z$ lifts to a surjection $H_*\colon B_3\twoheadrightarrow\pi_1^{\orb}(\M_{1,1})\cong\SL_2\Z$ given by
\[
  \sigma_1 \mapsto
  \begin{pmatrix}
    1 & 1 \\
    0 & 1
  \end{pmatrix},
  \qquad
  \sigma_2 \mapsto
  \begin{pmatrix}
    1 & 0 \\
    -1 & 1
  \end{pmatrix}.
\]

In \Cref{sec:BntoB3} we will address the implications of \Cref{thm:BntoG,thm:b3-to-parabolic} for homomorphisms $B_n\to B_3$.

A homomorphism $\varphi\colon G\to H$ is \emph{cyclic} if the image $\varphi(G)$ is a cyclic subgroup of $H$. We will use the following extensively.

\begin{lemma}[{\cite[Corollary 2]{For96}}]
  \label{lem:cyclic-homomorphism}
  Let $\varphi\colon B_n\to G$ be a homomorphism. Then either of the following conditions imply that $\varphi$ is cyclic.
  \begin{enumerate}[label=(\roman*)]
  \item $\varphi(\sigma_i)$ commutes with $\varphi(\sigma_{i+1})$ for some $1\leq i<n-1$.
  \item $n\geq5$, and $\varphi(\sigma_i)=\varphi(\sigma_j)$ for some $1\leq i<j<n$.
  \end{enumerate}
\end{lemma}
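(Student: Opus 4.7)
For (i), I will set $a = \varphi(\sigma_i)$ and $b = \varphi(\sigma_{i+1})$. The braid relation gives $aba = bab$, which, combined with $ab = ba$, simplifies to $a^2 b = a b^2$, hence $a = b$. So $\varphi(\sigma_i) = \varphi(\sigma_{i+1})$. I then plan to propagate this equality along the Coxeter chain by a short induction: at each step, $\sigma_k$ is adjacent in the Coxeter graph to a previously handled generator (giving a braid relation with $a$) and commutes in $B_n$ with another (giving commutativity with $a$), and the same algebraic manipulation forces $\varphi(\sigma_k) = a$. Equivalently, one checks that the normal closure of $\sigma_i \sigma_{i+1}^{-1}$ in $B_n$ equals the commutator subgroup $[B_n, B_n]$, so $\varphi$ factors through $B_n^{\mathrm{ab}} \cong \Z$.

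For (ii), the case $j = i + 1$ reduces immediately to (i). Otherwise $j \geq i + 2$, so $\sigma_i$ and $\sigma_j$ already commute in $B_n$ and equality of their images is not directly useful. My plan is to use $n \geq 5$ to produce an index $k$ such that $\sigma_k$ is adjacent in the Coxeter graph to one of $\sigma_i, \sigma_j$ while commuting with the other: if $i \geq 2$ take $k = i - 1$ (adjacent to $\sigma_i$, commuting with $\sigma_j$ since $j - (i - 1) \geq 3$); if $i = 1$ and $j \leq n - 2$ take $k = j + 1$ symmetrically; and in the remaining boundary case $i = 1$, $j = n - 1$ take $k = 2$, which is adjacent to $\sigma_1$ and commutes with $\sigma_{n-1}$ precisely because $|2 - (n - 1)| \geq 2$ when $n \geq 5$. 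In each case $\varphi(\sigma_k)$ commutes with $\varphi(\sigma_i) = \varphi(\sigma_j)$, and combining this with the braid relation between $\sigma_k$ and its neighbour forces $\varphi(\sigma_k) = \varphi(\sigma_i)$. Two adjacent generators then have equal (hence commuting) images, and (i) closes out the argument.

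I expect the delicate point to be the boundary case $i = 1$, $j = n - 1$ in (ii), which is the only place $n \geq 5$ is actually used. The hypothesis cannot be relaxed: for $n = 4$, the map $R_*\colon B_4 \twoheadrightarrow B_3$ satisfies $R_*(\sigma_1) = R_*(\sigma_3)$ and has non-cyclic image.
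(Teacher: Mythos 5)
Your argument is correct. Note that the paper does not prove this lemma at all --- it is quoted from Formanek \cite{For96} --- so there is no in-paper proof to compare against; what you have written is a correct, self-contained, elementary reconstruction of the standard argument. The two pillars both check out: the cancellation step (from $aba=bab$ and $ab=ba$ one gets $a^2b=ab^2$, hence $a=b$) is valid in any group, and your propagation along the Coxeter chain works because after the initial step you always have two \emph{consecutive} generators with a common image $a$, so the next generator in either direction braid-relates with one of them and commutes with the other, forcing its image to equal $a$ as well; equivalently, as you say, the normal closure of $\sigma_i\sigma_{i+1}^{-1}$ is all of $[B_n,B_n]$. Your case analysis in (ii) is also complete: when $j\geq i+2$ the three subcases ($i\geq2$; $i=1$, $j\leq n-2$; $i=1$, $j=n-1$) exhaust all possibilities, and you correctly isolate the boundary case $i=1$, $j=n-1$ as the only place where $n\geq5$ is genuinely needed. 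Your closing remark that $R_*\colon B_4\twoheadrightarrow B_3$ shows the hypothesis $n\geq5$ in (ii) is sharp matches exactly the role this homomorphism plays throughout the paper.
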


\subsection{$B_3\to\PSL_2\C$ and $B_3\to\PSL_2\Z$}\label{sec:B3toPSL}

\begin{lemma}
  \label{lem:B3-to-PSL2C}
  If $\varphi\colon B_3\to\PSL_2\C$ is non-cyclic, then $Z(B_3)\leq\ker(\varphi)$. In particular $\varphi$ descends to a homomorphism $\PSL_2\Z\to\PSL_2\C$.
\end{lemma}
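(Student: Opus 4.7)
The plan is to establish the contrapositive: if $C \coloneqq \varphi(\alpha^3) = \varphi(\beta^2) \in \PSL_2\C$ is non-trivial, then $\varphi$ is cyclic. Writing $A = \varphi(\alpha)$ and $B = \varphi(\beta)$, we have $A^3 = B^2 = C$, and since $\alpha$ commutes with $\alpha^3$ while $\beta$ commutes with $\beta^2$, both $A$ and $B$ lie in the centralizer of $C$ in $\PSL_2\C$.

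The key input is the standard fact that the centralizer of any non-identity element of $\PSL_2\C$ is abelian: up to conjugation, such an element is either loxodromic/elliptic and centralized by a maximal torus $\cong \C^*$, or parabolic and centralized by the one-parameter unipotent subgroup $\cong (\C,+)$. Granted this, $A$ and $B$ commute.

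I then apply the following purely algebraic observation, valid in any abelian group: if $A^3 = B^2$, then $g \coloneqq BA^{-1}$ satisfies $g^2 = B^2 A^{-2} = A^3 A^{-2} = A$ and $g^3 = B^3 A^{-3} = B^3 B^{-2} = B$, so $A, B \in \langle g \rangle$. Since $\alpha$ and $\beta$ generate $B_3$, the image $\varphi(B_3) = \langle A, B \rangle$ sits inside the cyclic group $\langle g \rangle$, contradicting non-cyclicity. The ``in particular'' statement is then immediate from the short exact sequence $1 \to Z(B_3) \to B_3 \to \PSL_2\Z \to 1$ recorded just before the lemma.

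There is no genuine obstacle: the argument cleanly reduces to a well-known description of centralizers in $\PSL_2\C$ together with a slick identity in abelian groups exploiting $\gcd(2,3) = 1$.
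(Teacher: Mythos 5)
Your overall strategy is the same as the paper's: assume $C\coloneqq\varphi(\alpha^3)=\varphi(\beta^2)\neq I$, deduce that $A=\varphi(\alpha)$ and $B=\varphi(\beta)$ commute, and conclude cyclicity (your $\gcd(2,3)=1$ identity makes explicit a step the paper leaves implicit, and is correct). However, the key fact you invoke is false as stated: the centralizer of a non-identity element of $\PSL_2\C$ is \emph{not} always abelian. The exception is exactly the involutions. For example, the centralizer of $z\mapsto -z$ contains both the torus $\{z\mapsto az\}$ and the elements $\{z\mapsto a/z\}$, and these do not commute with each other; equivalently, under $\PSL_2\C\cong\SO_3(\C)$ the centralizer of a rotation by $\pi$ is $\mathrm{O}_2(\C)\cong\C^*\rtimes\Z/2$. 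The description you give (torus, or unipotent one-parameter subgroup) is the centralizer in $\SL_2\C$; passing to $\PSL_2\C$ enlarges it precisely when $hgh^{-1}=-g$ is solvable, i.e.\ when $g$ has eigenvalues $\pm i$. So in the case where $C$ is an involution, your argument does not yet show that $A$ and $B$ commute.

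The gap is patchable: if $C$ has order $2$, then $B^2=C$ forces $B$ to have order $4$, while every element of the non-identity component of $C_{\PSL_2\C}(C)$ is an involution, so $B$ lies in the torus component; likewise $A$ has order $2$ or $6$, and if it has order $6$ it also lies in the torus, whence $A$ and $B$ commute, while if it has order $2$ then $A=A^3=C=B^2$ and the image is $\langle B\rangle$ outright. You should add this case (or some equivalent). The paper sidesteps the issue entirely by arguing with fixed-point sets rather than centralizers: non-identity powers of a M\"obius transformation have the same fixed-point set in $\CP^1$, so $A$, $B$, and $C=A^3=B^2$ all share the same fixed-point set, and two M\"obius transformations with identical fixed-point sets commute. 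That formulation is insensitive to the involution subtlety and is the cleaner route.
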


\begin{proof}
  Write $a=\varphi(\sigma_1\sigma_2)$ and $b=\varphi(\sigma_1\sigma_2\sigma_1)$, so that $a^3=b^2$. It suffices to show that either $a$ and $b$ commute, or $a^3=b^2=I$. Non-identity powers of a given M\"{o}bius transformation have the same fixed point set in $\CP^1$. Hence, if $a^3=b^2$ is not the identity, then $a$ and $b$ both have the same fixed point set, and thus $a$ and $b$ commute by \cite[Theorem 4.3.5(ii)]{Bea83}.
\end{proof}

\begin{remark}
  Since $H^2(B_3;\Z/2)=0$ \cite{Fuk70}, any homomorphism $B_3\to\PSL_2\C$ lifts to a representation $B_3\to\SL_2\C$. Formanek classified the two-dimensional irreducible representations of $B_3$ \cite[Theorem 11]{For96}. From that classification one can easily check that $Z(B_3)$ has image contained in $\{\pm I\}$ under any irreducible representation $B_3\to\SL_2\C$. However \Cref{lem:B3-to-PSL2C} also handles the reducible indecomposable representations $B_3\to\SL_2\C$.
\end{remark}

The following proposition classifies the endomorphisms of $\PSL_2\Z$.

\begin{proposition}
  \label{prop:End-PSL2Z}
  Let $a$, $b$ denote the images of $\alpha=\sigma_1\sigma_2$, $\beta=\sigma_1\sigma_2\sigma_1$ respectively under the quotient map $B_3\twoheadrightarrow\PSL_2\Z$, so that $\PSL_2\Z=\langle a\rangle*\langle b\rangle$ and $a^3=b^2=1$. Then the non-cyclic endomorphisms of $\PSL_2\Z$ are precisely those of the form
  \[ a \mapsto ga^{\pm1}g^{-1}, \quad b \mapsto hbh^{-1}, \]
  for some group elements $g,h\in\PSL_2\Z$.
\end{proposition}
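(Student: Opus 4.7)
The plan is to use the universal property of the free product $\PSL_2\Z \cong \langle a\rangle * \langle b\rangle$ together with the standard fact that every torsion element in a free product $G_1 * G_2$ is conjugate to an element of one of the factors. In our case this means every element of finite order in $\PSL_2\Z$ is conjugate to a power of $a$ or to a power of $b$; in particular the only elements of order $3$ are conjugates of $a$ or $a^{-1}$, and the only elements of order $2$ are conjugates of $b$.

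First I would observe that by the presentation $\PSL_2\Z = \langle a, b \mid a^3, b^2\rangle$, specifying an endomorphism $\varphi$ is the same as choosing elements $x, y \in \PSL_2\Z$ with $x^3 = y^2 = 1$ and setting $\varphi(a) = x$, $\varphi(b) = y$. Next I apply the torsion fact to $\varphi(a)$: since $\varphi(a)^3 = 1$, either $\varphi(a) = 1$ or $\varphi(a)$ has order exactly $3$, hence is a conjugate of $a^{\pm 1}$. Similarly either $\varphi(b) = 1$ or $\varphi(b) = hbh^{-1}$ for some $h$.

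I then rule out the trivial cases. If $\varphi(a) = 1$, then the image of $\varphi$ is generated by $\varphi(b)$ and is therefore cyclic; analogously if $\varphi(b) = 1$. So whenever $\varphi$ is non-cyclic, both $\varphi(a)$ and $\varphi(b)$ have the stated form. Conversely, for any $g, h \in \PSL_2\Z$, the assignment $a \mapsto g a^{\pm 1} g^{-1}$, $b \mapsto h b h^{-1}$ respects the relations $a^3 = b^2 = 1$ and so defines an endomorphism; its image contains an element of order $2$ and an element of order $3$, so it cannot be cyclic (a cyclic group with elements of orders $2$ and $3$ contains an element of order $6$, whereas $\PSL_2\Z$ has no such element, again by the torsion fact).

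The main, and essentially only, ingredient I am relying on is the classification of torsion in the free product $\Z/2 * \Z/3$. I expect no serious obstacle: once that fact is invoked, the argument is purely bookkeeping on the presentation. If a self-contained justification is wanted, one can give it directly using normal form in the free product, or cite the Kurosh subgroup theorem.
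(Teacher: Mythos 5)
Your proof is correct and takes essentially the same route as the paper, which invokes Kurosh's theorem to conjugate the images of the finite factors $\langle a\rangle$ and $\langle b\rangle$ into the factors of the free product; your appeal to the torsion theorem for free products is the same idea. If anything you are slightly more thorough, since you explicitly dispose of the cases $\varphi(a)=1$ or $\varphi(b)=1$ and verify that every map of the stated form is genuinely non-cyclic, points the paper leaves implicit.
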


\begin{proof}
  By the free product description of $\PSL_2\Z$ all maps above extend to endomorphisms. Since the order of $a$ and the order of $b$ are coprime integers, Kurosh's theorem \cite{Kur34} implies that any endomorphism must carry $\langle a\rangle$ into a conjugate subgroup $g\langle a\rangle g^{-1}$, and also $\langle b\rangle$ to a conjugate subgroup $h\langle b\rangle h^{-1}$. Hence these are the only possibilities.
\end{proof}

We now describe the non-cyclic homomorphisms $B_3\to\PSL_2\Z$ where the images of the standard generators are parabolic.

\begin{proof}[Proof of \Cref{thm:b3-to-parabolic}]
Without loss of generality, we may assume that $\varphi(\sigma_1)=\begin{bsmallmatrix}1&x\\0&1\end{bsmallmatrix}$ for some integer $x>0$. Let $\eta\in\PP^1(\Q)=\Q\cup\{\infty\}$ be the unique fixed point of $\varphi(\sigma_2)$. Since $\varphi$ is non-cyclic, $\eta\neq\infty$.

  Let $T_\eta\coloneqq\begin{bsmallmatrix}1&\eta\\0&1\end{bsmallmatrix}$, and define $\psi\colon B_3\to\PSL_2\Q$ by $\psi(g)=T_\eta^{-1}\cdot\varphi(g)\cdot T_\eta$. Note that $\psi(\sigma_2)$ fixes $0\in\PP^1(\Q)$, hence its top right entry is zero. We can directly compute the remaining entries in terms of $\varphi(\sigma_2)=\begin{bsmallmatrix} a & \ast \\ c & d \end{bsmallmatrix}$ to be
  \[
    \psi(\sigma_2) = \begin{bmatrix} a - c\eta & 0 \\ c & d + c\eta \end{bmatrix}.
  \]
  Since $\det(\psi(\sigma_2))=1\in\Z$ and $a,d\in\Z$, we must have $c\eta\in\Z$. Hence $\psi(\sigma_2)=\begin{bsmallmatrix}1&0\\y&1\end{bsmallmatrix}$, where $y=\pm c\in\Z$ is non-zero. The braid relation implies
  \begin{gather*}
    \begin{bmatrix} 1 & x \\ 0 & 1 \end{bmatrix} \begin{bmatrix} 1 & 0 \\ y & 1 \end{bmatrix} \begin{bmatrix} 1 & x \\ 0 & 1 \end{bmatrix} = \begin{bmatrix} 1 & 0 \\ y & 1 \end{bmatrix} \begin{bmatrix} 1 & x \\ 0 & 1 \end{bmatrix} \begin{bmatrix} 1 & 0 \\ y & 1 \end{bmatrix} \\[0.5em] \implies  xy + 1 = 0 \implies (x,y) = (1,-1).
  \end{gather*}
  Hence $c=\pm1$, and thus $\eta\in\Z$. Thus $T_\eta\in\PSL_2\Z$, and $\psi\colon B_3\to\PSL_2\Z$ has the required form, completing the proof.
\end{proof}

\subsection{$B_n\to\PSL_2\Z$}\label{sec:BntoPSL}

We first prove a useful lemma about $\PSL_2\Z$, by viewing it as a subgroup of $\Isom^+(\HH)\cong\PSL_2\R$.

\begin{lemma}\label{lem:comm-conj}
If two elements of $\PSL_2\Z$ commute and are conjugate, then they must be either equal or inverses.
\end{lemma}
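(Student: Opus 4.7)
My plan is to view $\PSL_2\Z$ as a subgroup of $\Isom^+(\HH)\cong\PSL_2\R$ and to case-split on the type of the non-trivial element $g$ (the case $g=e$ being immediate). Writing $h=cgc^{-1}$ with $c\in\PSL_2\Z$, the hypothesis $gh=hg$ forces $h$ into the $\PSL_2\R$-centralizer of $g$, which is a one-parameter subgroup. A short argument --- ruling out the possibility that $h$ swaps the two boundary endpoints of a hyperbolic $g$'s axis, which would make $h$ a finite-order involution and contradict $h$ being conjugate to the infinite-order $g$ --- shows that $g$ and $h$ share their fixed points in $\overline{\HH}$ pointwise. Consequently $c$ must preserve the fixed point set of $g$ as a set.

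If $g$ is elliptic with fixed point $z_0\in\HH$, then $c$ also fixes $z_0$, and the $\PSL_2\Z$-stabilizer of any point of $\HH$ is finite cyclic (of order at most $3$), so $c$ centralizes $g$ and hence $h=g$. If $g$ is parabolic with fixed cusp $p\in\Q\cup\{\infty\}$, I would conjugate in $\PSL_2\Z$ to arrange $p=\infty$, reducing to $g=\begin{bsmallmatrix}1&n\\0&1\end{bsmallmatrix}$. Since an upper triangular matrix in $\SL_2\Z$ must have $\pm 1$ on the diagonal, the $\PSL_2\Z$-stabilizer of $\infty$ is the abelian cyclic group of integer translations, so again $c$ centralizes $g$ and $h=g$.

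If $g$ is hyperbolic with fixed points $p,q\in\partial\HH$, then $c$ either fixes both $p,q$ or swaps them. In the first subcase $c$ lies in the abelian one-parameter subgroup of $\PSL_2\R$ stabilizing the axis of $g$, so $c$ centralizes $g$ and $h=g$; in the second, $c$ reverses the oriented axis of $g$, giving $cgc^{-1}=g^{-1}$ and so $h=g^{-1}$. In every case $h\in\{g,g^{-1}\}$, as required.

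I expect the main step requiring care to be the fixed-point-swap argument in the hyperbolic case (for the element $h$, not the conjugator $c$); everything else reduces to elementary properties of the action of $\PSL_2\Z\leq\PSL_2\R$ on $\HH\cup\partial\HH$, together with the observation that cyclic parabolic and elliptic stabilizers in $\PSL_2\Z$ are abelian.
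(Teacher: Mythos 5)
Your proof is correct. It rests on the same two pillars as the paper's: the trichotomy elliptic/parabolic/hyperbolic, and the fact that commuting non-identity elements of $\PSL_2\R$ share their fixed point set (your ruling-out of the order-two exceptional case via the conjugacy hypothesis is a valid way to secure this). Where you diverge is in how the conclusion is extracted. The paper never looks at the conjugator: it compares conjugation-invariants of $g$ and $h$ directly --- same axis and translation length in the hyperbolic case, same fixed point and unsigned rotation angle in the elliptic case, and for parabolics a normalization to $\infty$ followed by the observation that distinct powers of $\begin{bsmallmatrix}1&1\\0&1\end{bsmallmatrix}$ other than inverse pairs are not conjugate in $\PSL_2\Z$. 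You instead deduce that the conjugator $c$ preserves $\mathrm{Fix}(g)$ and then analyze stabilizers in $\PSL_2\Z$ (finite cyclic point stabilizers, abelian cusp stabilizers, the axis stabilizer). Your route exploits the discreteness of $\PSL_2\Z$ more heavily and yields slightly finer information (it tells you exactly when $h=g$ versus $h=g^{-1}$, and shows $h=g$ is forced in the elliptic and parabolic cases); the paper's route is shorter and, except for the parabolic step, works verbatim in all of $\PSL_2\R$. Both are complete proofs of the lemma.
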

\begin{proof}
  If both are hyperbolic they have the same translation axis and translation length. If they are both elliptic they have a common fixed point and the same unsigned angle of rotation. If they are both parabolic we may assume they both fix $\infty\in\PP^1(\Q)$. Hence both are powers of $\begin{bsmallmatrix} 1 & 1 \\ 0 &1 \end{bsmallmatrix}$, thus equal since they are conjugate in $\PSL_2\Z$. In all three cases they must be equal or inverses.
\end{proof}

\begin{proof}[Proof of \Cref{thm:BntoG}]
  \Cref{lem:B3-to-PSL2C} implies (iii). It follows from \Cref{lem:cyclic-homomorphism}(ii) that (ii)$\implies$(i). Suppose then that $n=4$.
By \Cref{lem:comm-conj}, $\varphi(\sigma_1)$ and $\varphi(\sigma_3)$ are either equal or inverses. By \Cref{prop:End-PSL2Z} if $\varphi$ is non-cyclic, then $\varphi(\sigma_i)$ and $\varphi(\sigma_{i+1})$ are mapped to the same generator of the abelianization $\Z/6\Z$ of $\PSL_2\Z$, for $i=1,2$. This rules out the possibility that $\varphi(\sigma_1)$ and $\varphi(\sigma_3)$ are inverses. Hence $\varphi(\sigma_1)=\varphi(\sigma_3)$, which proves (ii).
\end{proof}

\subsection{Lifts to $B_n\to B_3$}\label{sec:BntoB3}

Throughout this paper we will denote the centralizer of a group $H$ in a group $G$ by $C_G(H)$, and $C_G(g)\coloneqq C_G(\langle g\rangle)$ for $g\in G$.

\begin{definition}[\bf{Transvection}]
  Let $\varphi\colon G\to H$ be a homomorphism. If $t\colon G\to C_H(\varphi(G))$ is a cyclic homomorphism, then the \emph{transvection of $\varphi$ by $t$} is the homomorphism $\varphi^t\colon G\to H$ defined by
  \[
    \varphi^t(g) = \varphi(g)t(g).
  \]
\end{definition}

\begin{theorem}
  \label{thm:bntob3}
  Let $\varphi\colon B_n\to B_3$ be a homomorphism.
  \begin{enumerate}[label=(\roman*)]
  \item If $n\geq5$, then $\varphi$ is cyclic.
  \item If $n=4$, then $\varphi$ factors through $R_*$.
  \item If $n=3$ and $\varphi$ is non-cyclic, then up to a transvection by a homomorphism $B_3\to Z(B_3)$ and post-composing by an automorphism of $B_3$, the map $\varphi$ has the form
    \[
      \alpha \mapsto g\alpha g^{-1}, \quad \beta \mapsto \beta,
    \]
    for some $g\in B_3$.
  \end{enumerate}
\end{theorem}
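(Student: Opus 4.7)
The plan is to reduce each part to a statement about the composition $\bar\varphi\colon B_n\to B_3\twoheadrightarrow\PSL_2\Z$ via \Cref{thm:BntoG}, and then lift back using the short exact sequence $1\to Z(B_3)\to B_3\to\PSL_2\Z\to1$. Throughout, let $z$ be the generator of $Z(B_3)$ with $z=\alpha^3=\beta^2$.

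For (i), $n\geq5$: \Cref{thm:BntoG}(i) yields that $\bar\varphi$ is cyclic, so $\varphi(B_n)$ lies in the preimage $H$ in $B_3$ of some cyclic subgroup of $\PSL_2\Z$. This $H$ is always abelian: finite cyclic subgroups of $\PSL_2\Z$ lift to $\langle\alpha\rangle$, $\langle\beta\rangle$, or $Z(B_3)$, each isomorphic to $\Z$, and an infinite cyclic subgroup has a preimage that fits in $1\to Z(B_3)\to H\to \Z\to 1$ with $Z(B_3)$ central, hence $H\cong\Z^2$. Since $\varphi$ takes values in an abelian group, it factors through $B_n^{\mathrm{ab}}\cong\Z$ and is cyclic.

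For (ii), $n=4$: By \Cref{thm:BntoG}(ii), $\bar\varphi$ factors through $R_*$, hence $\varphi(\sigma_3)=\varphi(\sigma_1)z^k$ for some $k\in\Z$. Writing $a=\varphi(\sigma_1)$, $b=\varphi(\sigma_2)$, $c=\varphi(\sigma_3)$, the braid relation $bcb=cbc$ together with centrality of $z$ rewrites as $bab\cdot z^k=aba\cdot z^{2k}$; combining with $aba=bab$ from the first braid relation forces $z^k=1$, so $k=0$ and $\varphi$ factors through $R_*$.

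For (iii), $n=3$ and $\varphi$ non-cyclic: the argument in (i) shows $\bar\varphi$ must also be non-cyclic, so by \Cref{thm:BntoG}(iii) and \Cref{prop:End-PSL2Z} the induced endomorphism of $\PSL_2\Z$ has the form $\bar\alpha\mapsto\bar g\bar\alpha^\epsilon\bar g^{-1}$, $\bar\beta\mapsto\bar h\bar\beta\bar h^{-1}$ for some $\epsilon\in\{\pm1\}$ and $\bar g,\bar h\in\PSL_2\Z$. Lifting, we may write $\varphi(\alpha)=g\alpha^\epsilon g^{-1}z^j$ and $\varphi(\beta)=h\beta h^{-1}z^k$, and the relation $\alpha^3=\beta^2$ imposes $\epsilon+3j=1+2k$. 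First post-compose with the inner automorphism $c_{h^{-1}}$ so that $\varphi(\beta)=\beta z^k$. Next transvect by the homomorphism $B_3\to Z(B_3)$ with $\sigma_i\mapsto z^{-m}$, which sends $(j,k)\mapsto(j-2m,k-3m)$; the constraint forces $j$ to be even, so we may kill $j$. When $\epsilon=+1$, $3j=2k$ then gives $k=0$ and we are done. When $\epsilon=-1$, we are left with $\varphi(\alpha)=g'\alpha^{-1}(g')^{-1}$ and $\varphi(\beta)=\beta z^{-1}=\beta^{-1}$ (since $z=\beta^2$); post-composing with the outer automorphism $\iota\in\Aut(B_3)$ defined by $\sigma_1\mapsto\sigma_2^{-1}$, $\sigma_2\mapsto\sigma_1^{-1}$, which satisfies $\iota(\alpha)=\alpha^{-1}$ and $\iota(\beta)=\beta^{-1}$, then yields the desired form.

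The main obstacle is the $\epsilon=-1$ case in (iii), where the transvection alone cannot clear the residual central factor from $\varphi(\beta)$; the key observation is that $\beta z^{-1}=\beta^{-1}$, so the outer automorphism $\iota$ simultaneously corrects the sign on $\alpha$ and the central factor on $\beta$.
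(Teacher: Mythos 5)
Your proposal is correct and follows essentially the same route as the paper: project to $\PSL_2\Z$ via the central extension $1\to Z(B_3)\to B_3\to\PSL_2\Z\to1$, apply \Cref{thm:BntoG}, and lift back, with lifts differing by central (transvection) factors. The paper states this in three sentences and leaves the bookkeeping implicit; you carry out the lifting computations explicitly (the abelian preimage argument for (i), killing $z^k$ via the braid relation for (ii), and the $\epsilon=-1$ case handled by the inversion automorphism for (iii)), all of which check out.
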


\begin{proof}
  If we post-compose the above homomorphisms with the quotient map $B_3\twoheadrightarrow B_3/Z(B_3)\cong\PSL_2\Z$, then we obtain all homomorphisms classified in \Cref{thm:BntoG}. Furthermore, any two homomorphisms $B_n\to B_3$ that lift the same homomorphism $B_n\to\PSL_2\Z$ can only differ by a transvection $B_n\to Z(B_3)$. Hence this is a complete list of homomorphisms $B_n\to B_3$.
\end{proof}

Dyer and Grossman \cite{DG81} have shown that $\Aut(B_n)=\Inn(B_n)\rtimes\langle\iota\rangle$ for $n\geq2$, where $\iota\colon B_n\to B_n$ is the inversion automorphism given by $\sigma_i\mapsto\sigma_i^{-1}$ for $i=1,\ldots,n-1$, so this gives a complete classification of the homomorphisms $B_n\to B_3$.

In \Cref{thm:bntob3} the case $n\geq5$ is a special case of a Theorem of Lin \cite[Theorem 3.1]{Lin04b}, and $n=4$ is already known by Chen, Kordek and Margalit \cite[Proposition 8.6]{CKM23}. The case $n=3$ is stated by Orevkov \cite[Proposition 1.8]{Ore22} without proof. Our proof of Theorem~\ref{thm:BntoG} is independent of these results.

\section{Nielsen--Thurston theory}\label{sec:Nielsen-Thurston}

The purpose of this section is twofold: to recall well-known facts about the braid group from the Nielsen--Thurston point of view, while also establishing the notation and conventions we use.

There is a canonical isomorphism between the braid group $B_n$ and the mapping class group $\Mod(\D_n)$ of $\D_n$. Here $\D_n$ is a disk including its boundary with $n$ unordered marked points in the interior. Recall that the mapping class group of a surface $\Mod(S)$ is the group of orientation preserving homeomorphisms of $S$ that preserve its marked points as a set, and fix each boundary component pointwise.

The Nielsen--Thurston classification theorem states that every element of a mapping class group, hence also every braid, is either reducible, periodic, or pseudo-Anosov \cite{Thu88}. We will outline what this means for the braid group $B_n$. For more detail, see \cite[Chapters 9 and 13]{FM12}.

Our strategy for classifying homomorphisms $B_n\to B_4$ involves considering the possible Nielsen--Thurston types of the image of a generator of $Z(B_n)$ in $B_4$, and its possible canonical reduction systems.

\subsection{Canonical reduction systems}

A \emph{simple closed curve} in $\D_n$ is an isotopy class of smooth maps $S^1\to\D_n$ without self intersection that avoid the $n$ marked points. For convenience, we do not consider null-homotopic curves to count as simple closed curves. The braid group $B_n$ acts on the set of simple closed curves in $\D_n$.

A \emph{multicurve} is a set of disjoint simple closed curves. For our purposes, it will be convenient to allow multicurves to contain non-essential simple closed curves, i.e., curves that are isotopic to a marked point or boundary component. We say a multicurve is \emph{nontrivial} if it contains at least one essential component, and we say it is \emph{essential} if all its components are essential.

A \emph{reduction system} of a braid $g\in B_n$ is an essential multicurve preserved by $g$. The \emph{canonical reduction system} $\crs(g)$ of $g$ is the intersection of all maximal reduction systems of $g$. In the special case of the surface $\D_n$ there is one additional notion we may define: the \emph{outer canonical reduction system} $\ocrs(g)$. We define this to be the outermost loops in the canonical reduction system, together with a small loop surrounding every marked point not already encircled by a component of the canonical reduction system.

Note that $\crs(ghg^{-1})=g\cdot\crs(h)$ and $\ocrs(ghg^{-1})=g\cdot\ocrs(h)$. In particular commuting elements preserve each other's (outer) canonical reduction systems.

The outer canonical reduction systems in $\D_n$, up to the action of $B_n$, are in bijection with the partitions of $n$: the partition of $\ocrs(g)$ is obtained by recording the number of marked points each component of $g$ encircles.

\subsection{Reducible braids}

A braid is \emph{reducible} if its canonical reduction system is non-empty. This is equivalent to its outer canonical reduction system being non-trivial.

Important examples of reducible braids include \emph{Dehn twists} $T_\gamma$ about a simple closed curve $\gamma$, see \cite[Chapter 3]{FM12} for a definition. The Dehn twist $z$ about a curve isotopic to the boundary component of $\D_n$ generates $Z(B_n)$. A \emph{multitwist} is a product $\prod_{i=1}^kT_{\gamma_i}^{p_i}$ of Dehn twists, where $p_i\in\Z$ and the curves $\gamma_i$ are pairwise disjoint. The canonical reduction system of such a product is the set of all $\gamma_i$ that are not isotopic to a boundary component.

If $\varphi\colon G\to B_m$ is a homomorphism, we say it is \emph{reducible} if some essential multicurve is preserved by $\varphi(G)$. If $G=B_n$ and $z$ is a generator of $Z(B_n)$, then $\varphi$ is reducible if and only if $\varphi(z)$ is. For this reason we define $\crs(\varphi)\coloneqq\crs(\varphi(z))$ and $\ocrs(\varphi)\coloneqq\ocrs(\varphi(z))$.

We will need to work with the stabilizer $B_\Gamma$ in $B_n$ of an outer canonical reduction system $\Gamma\coloneqq\{\gamma_1,\ldots,\gamma_m\}$ in $\D_n$. If we collapse the disks bounded by each $\gamma_i$ to a point, then we obtain a disk $\D_m$ whose $i$th marked point can be labelled with the number $n_i$ of marked points originally encircled by $\gamma_i$.

Given a labelling $L$ of the marked points in $\D_m$, let $B_L$ be the subgroup of $B_m$ preserving this labelling. Groups of the form $B_L$ are \emph{mixed braid groups}. The labelling $L$ determines a partition of the $m$ marked points. If $k_1,\ldots,k_\ell$ are the sizes of the parts of this partition, then these integers determine the isomorphism type of $B_L$, and we write $B_L=B_{k_1,\ldots,k_\ell}$.

If a labelling is obtained by the above construction collapsing the disks bounded by components of $\Gamma$, then denote the labelling by $[\Gamma]$. See \Cref{fig:split} for an example. For example, if $\Gamma\coloneqq\{\gamma_1,\ldots,\gamma_5\}$ is the outer canonical reduction system in \Cref{fig:split}, then $B_{[\Gamma]}\cong B_{1,2,1,1}<B_5$.

\begin{figure}
  \centering
  \begin{tikzpicture}
    \draw (0.15,0) circle (3);
    \node at (0.4,0)[circle,fill,inner sep=1pt]{};
    \node at (0.8,0)[circle,fill,inner sep=1pt]{};
    \node at (1.2,0)[circle,fill,inner sep=1pt]{};
    \node at (1.6,0)[circle,fill,inner sep=1pt]{};
    \node at (2,0)[circle,fill,inner sep=1pt]{};
    \node at (2.4,0)[circle,fill,inner sep=1pt]{};
    \node at (0,0)[circle,fill,inner sep=1pt]{};
    \node at (-0.4,0)[circle,fill,inner sep=1pt]{};
    \node at (-0.8,0)[circle,fill,inner sep=1pt]{};
    \node at (-1.2,0)[circle,fill,inner sep=1pt]{};
    \node at (-1.6,0)[circle,fill,inner sep=1pt]{};
    \node at (-2,0)[circle,fill,inner sep=1pt]{};
    \node at (-2.4,0)[circle,fill,inner sep=1pt]{};
    \node at (-2.5,2.5){$\D_{13}$};
    \draw (-1.8,0) ellipse (0.75 and 0.4);
    \node at (-1.8,-0.75) {$\gamma_1$};
    \draw (-0.4,0) ellipse (0.55 and 0.35);
    \node at (-0.4,-0.75) {$\gamma_2$};
    \draw (0.8,0) ellipse (0.55 and 0.35);
    \node at (0.8,-0.75) {$\gamma_3$};
    \draw (1.8,0) ellipse (0.35 and 0.3);
    \node at (1.8,-0.75) {$\gamma_4$};
    \draw (2.4,0) circle (0.2);
    \node at (2.4,-0.75) {$\gamma_5$};
    \draw (6,0) circle (2);
    \node at (4.2,1.8){$\D_5$};
    \node at (6,0)[circle,fill,inner sep=1pt]{};
    \node at (4.8,-0.4){$4$};
    \node at (6.6,0)[circle,fill,inner sep=1pt]{};
    \node at (5.4,-0.4){$3$};
    \node at (7.2,0)[circle,fill,inner sep=1pt]{};
    \node at (6,-0.4){$3$};
    \node at (5.4,0)[circle,fill,inner sep=1pt]{};
    \node at (6.6,-0.4){$2$};
    \node at (4.8,0)[circle,fill,inner sep=1pt]{};
    \node at (7.2,-0.4){$1$};
  \end{tikzpicture}
  \caption{Labelling from an outer canonical reduction system.}
  \label{fig:split}
\end{figure}
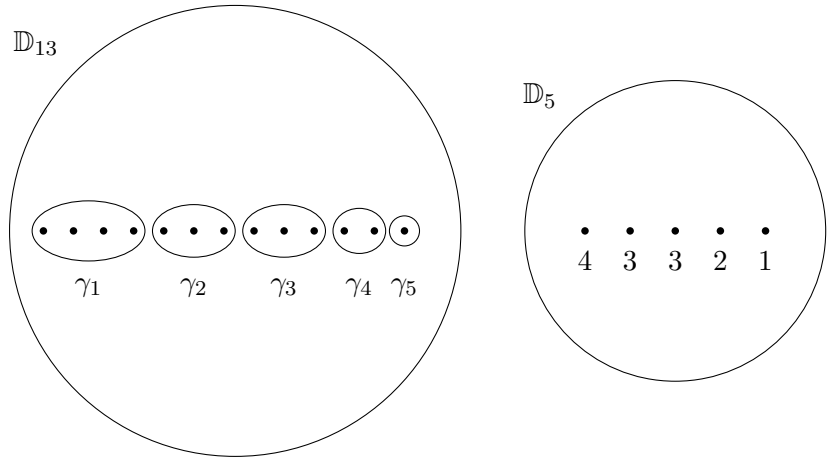

\begin{lemma}[{\cite[Lemma 3.1]{KM22}}]
  \label{lem:split}
  The short exact sequence
  \[
    1\to (B_{n_1}\times\cdots\times B_{n_m})\to B_\Gamma\to B_{[\Gamma]}\to1
  \]
  obtained by collapsing the disks bounded by $\gamma_i$ to a point, has a section. We obtain a semi-direct product $B_\Gamma \cong (B_{n_1}\times\cdots\times B_{n_m})\rtimes B_{[\Gamma]}$. The action of $B_{[\Gamma]}$ on $B_{n_1}\times\cdots\times B_{n_m}$ factors through the homomorphism $B_{[\Gamma]}\to S_m$ and is given by permuting the factors.
\end{lemma}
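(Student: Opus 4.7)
The plan is to build an explicit splitting by cutting along $\Gamma$ and choosing coherent identifications of the pieces. The first step is to identify the kernel: a mapping class in $B_\Gamma$ that collapses to the identity on $\D_m$ admits a representative that is the identity on $\D_n$ outside the interior disks $D_1,\ldots,D_m$ bounded by $\gamma_1,\ldots,\gamma_m$. Once the representative is supported in $\bigsqcup_{i=1}^m D_i$, the restrictions to the disjoint subdisks give the product decomposition $\ker \cong B_{n_1}\times\cdots\times B_{n_m}$ using the tautological isomorphism $\Mod(D_i)\cong B_{n_i}$.

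To construct the section, I would fix once and for all, for each $k$ appearing among the $n_i$, a standard disk $D^{(k)}$ with $k$ unordered marked points, together with identifications $\phi_i\colon D_i\xrightarrow{\sim}D^{(n_i)}$. Given $[f]\in B_{[\Gamma]}$ inducing a label-preserving permutation $\pi=\pi_f$ of $\{1,\ldots,m\}$, I would isotope a representative $\bar f$ of $\D_m$ to be standard on small neighborhoods of the marked points (possible because the labels match), then lift to $s(f)\in B_\Gamma$ by copying $\bar f$ on $\D_n\setminus\bigsqcup_i D_i$ and declaring $s(f)|_{D_i}=\phi_{\pi(i)}^{-1}\circ\phi_i$ on each interior disk. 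The composition $B_{[\Gamma]}\xrightarrow{s}B_\Gamma\to B_{[\Gamma]}$ is manifestly the identity, and $s$ is a homomorphism because on each $D_i$ we have
\[
  s(f)\circ s(g)|_{D_i} = \bigl(\phi_{\pi_f\pi_g(i)}^{-1}\circ\phi_{\pi_g(i)}\bigr)\circ\bigl(\phi_{\pi_g(i)}^{-1}\circ\phi_i\bigr) = \phi_{\pi_{fg}(i)}^{-1}\circ\phi_i = s(fg)|_{D_i},
\]
using that $\pi_{fg}=\pi_f\pi_g$.

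For the action by conjugation, if $\sigma\in B_{n_i}$ is supported in $D_i$ then $s(f)\,\sigma\,s(f)^{-1}$ is supported in $s(f)(D_i)=D_{\pi(i)}$, so the action permutes the factors according to $\pi_f$. The induced isomorphism $B_{n_i}\to B_{n_{\pi(i)}}$ is conjugation by $\phi_{\pi(i)}^{-1}\circ\phi_i$, the canonical identification between two disks with the same number of marked points; under the tautological identifications $B_{n_i}\cong B_k\cong B_{n_{\pi(i)}}$ this is just the identity, so the action factors through $B_{[\Gamma]}\twoheadrightarrow S_m$ and is by permutation of factors, as claimed.

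The main obstacle is the very first step: showing that a kernel element admits a representative supported in $\bigsqcup_i D_i$, since a priori an element of $B_\Gamma$ only preserves $\Gamma$ setwise and may shuffle collars. A clean way to handle this is via the Alexander method and the change-of-coordinates principle \cite[\S2.3]{FM12}: first isotope the kernel element to fix each $\gamma_i$ pointwise inside a fixed annular collar; then observe that the restriction to $\D_n$ minus the interior disks induces a mapping class of $\D_m$ (rel marked points and boundary) which is trivial by hypothesis, hence isotopic to the identity there. Piecing these isotopies together yields the desired representative, and simultaneously confirms that $s$ is well-defined on isotopy classes independent of the choice of $\bar f$.
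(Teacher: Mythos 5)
The paper does not prove this lemma itself (it is quoted from \cite{KM22}), so I am comparing your argument with the standard one. Your three-part architecture --- identify the kernel, build a section by cut-and-paste, read off the conjugation action --- is the right shape, and the kernel and action parts are fine modulo standard facts (injectivity of $B_{n_1}\times\cdots\times B_{n_m}\to B_\Gamma$, which you elide, is the usual statement that the inclusion of a disjoint union of essential subsurfaces induces an injection of mapping class groups, \cite[Theorem 3.18]{FM12}).

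The genuine gap is the well-definedness of $s$, which is where essentially all the content of the lemma lives. Two representatives $\bar f_0,\bar f_1$ of the same class of $B_{[\Gamma]}$, both standard on the small neighborhoods of the marked points, need not have isotopic lifts: the isotopy joining them may rotate those neighborhoods, so the lifts can differ by an arbitrary element of $\langle T_{\gamma_1},\ldots,T_{\gamma_m}\rangle$. Concretely, take $f=\id$, let $\bar f_0$ be the identity, and let $\bar f_1$ be a twist about an inessential curve $\delta$ surrounding the $i$th marked point of $\D_m$, supported in an annulus disjoint from the standard neighborhoods; then $\bar f_1$ is isotopic to the identity in $B_{[\Gamma]}$ and is standard near the marked points, yet your recipe lifts it to $T_{\gamma_i}$, the generator $z_{n_i}$ of $Z(B_{n_i})$, which is nontrivial in $B_\Gamma$ whenever $n_i\geq2$. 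So $s([f])$ is only determined up to $\prod_i T_{\gamma_i}^{a_i}$, and your closing paragraph does not resolve this: the capping exact sequence you invoke there (the kernel of $\Mod(\text{$m$-holed disk})\to\Mod(\D_m)$ is generated by the boundary twists) is precisely the source of the ambiguity, not its cure. The standard fixes are either to define the section by cabling at the level of spaces --- replace each point labelled $k$ by a standard cluster of $k$ points in a disk whose radius varies continuously with the configuration (small compared to the distances to the other points); continuity of this assignment is exactly what pins down the twisting --- or to define $s$ only on the standard generators of $B_{[\Gamma]}$ by your recipe and verify the braid and commutation relations directly.
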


\subsection{Periodic braids}

A braid in $B_n$ is \emph{periodic} if it has finite order in $B_n/Z(B_n)$.

Let
  \[
    \alpha_n\coloneqq\sigma_1\cdots\sigma_{n-1}, \quad \beta_n\coloneqq\alpha_n\sigma_1, \quad z_n\coloneqq\alpha_n^n=\beta_n^{n-1}.
  \]

   It is a result of Chow that $Z(B_n)$ is infinite cyclic generated by $z_n$ \cite{Cho48}. We will continue to write $\alpha=\alpha_3$ and $\beta=\beta_3$.

The following proposition follows from work of Ker\'ekj\'art\'o and Eilenberg.

\begin{proposition}[\cite{Ker19,Eil34}]
  \label{prop:what-is-periodic}
  The periodic elements of $B_n$ are all conjugate to a power of either $\alpha_n$ or $\beta_n$.
\end{proposition}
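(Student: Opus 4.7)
The plan is to invoke the Kerékjártó--Eilenberg theorem on finite-order homeomorphisms of $S^2$, after transferring from the disk setting to the sphere setting by capping off. Let $g\in B_n$ be periodic, so $g^N=z_n^M$ for some $N\geq1$. Identifying $B_n\cong\Mod(\D_n)$, I would cap off the boundary of $\D_n$ with a once-marked disk to obtain $S^2$ with $n+1$ marked points $p_1,\ldots,p_n,p_\infty$. Since $z_n$ becomes trivial after capping off, this induces a homomorphism $B_n/Z(B_n)\to\Mod(S^2,n+1)$ landing in the stabilizer of $p_\infty$, under which the image $\bar g$ of $g$ has finite order.

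Next, I would realize $\bar g$ by an actual homeomorphism of the same finite order via Nielsen realization on the sphere; in this setting this follows already from the Kerékjártó--Eilenberg theorem, which asserts that any orientation-preserving periodic homeomorphism of $S^2$ is topologically conjugate to a Euclidean rotation. After conjugation we may assume $\bar g$ is represented by a rotation $r$ of $S^2$ setwise preserving the marked points, and since $\bar g$ fixes $p_\infty$ we may further conjugate so that $p_\infty$ is a fixed point of $r$.

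The other fixed point $q$ of $r$ is either not a marked point, in which case $r$ permutes $p_1,\ldots,p_n$ in orbits whose common size equals its order $k$ (so $k\mid n$), or $q=p_i$ for some $i$, in which case $r$ permutes the remaining $n-1$ marked points in orbits of size $k$ (so $k\mid n-1$). In the first case $r$ is conjugate through a homeomorphism of $S^2$ fixing $p_\infty$ and permuting the marked points to the standard rotation realizing $\alpha_n^{n/k}$; in the second case to the standard rotation realizing $\beta_n^{(n-1)/k}$. Hence $\bar g$ is conjugate in $\Mod(S^2,n+1)$ to the image of a power of $\alpha_n$ or $\beta_n$, and lifting back to $B_n$ shows that $g$ itself is conjugate to such a power multiplied by an element of $Z(B_n)=\langle z_n\rangle$. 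Since $z_n=\alpha_n^n=\beta_n^{n-1}$, any power of $\alpha_n$ or $\beta_n$ times a power of $z_n$ is again a power of $\alpha_n$ or $\beta_n$ respectively.

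The main technical point is the last step, lifting the sphere-level conjugacy back to a conjugacy in $B_n$. This requires verifying that the ambient conjugating homeomorphism of $S^2$, which fixes $p_\infty$ and permutes the marked points, is the image of an element of $B_n$, and then controlling the remaining central ambiguity. Both issues are resolved by exploiting the central extension $1\to Z(B_n)\to B_n\to B_n/Z(B_n)\to 1$ and the fact that the generator $z_n$ is absorbed into powers of $\alpha_n$ and $\beta_n$ themselves.
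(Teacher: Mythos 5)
Your argument is correct and is essentially the paper's approach: the paper gives no proof of this proposition, only the citation to Ker\'ekj\'art\'o and Eilenberg, and what you have written is the standard derivation behind that citation (cap off $\partial\D_n$ to get the stabilizer of $p_\infty$ in $\Mod(S^2,n+1)$ modulo $Z(B_n)=\langle z_n\rangle$, conjugate the periodic class to a rotation, split into the cases where the second pole is unmarked or marked, and absorb the central ambiguity using $z_n=\alpha_n^n=\beta_n^{n-1}$). The one imprecision is that Ker\'ekj\'art\'o--Eilenberg classifies actual finite-order homeomorphisms of $S^2$, so the preliminary step of realizing the finite-order \emph{mapping class} $\bar g$ by a finite-order homeomorphism does not ``follow already'' from it but requires Nielsen realization for cyclic groups on punctured spheres (classical, and you do invoke it by name), after which your argument goes through.
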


Gonz\'alez-Meneses and Wiest attribute the computation of centralizers of non-central periodic braids below to Bessis, Digne and Michel \cite{BDM02}.

\begin{proposition}\cite[Theorems 3.2 and 3.4]{GW04}
  \label{prop:periodic}\
  \begin{enumerate}[label=(\alph*)]
  \item Let $d=\gcd(k,n)$, and suppose that $d<n$. The centralizer of $\alpha_n^k$ in $B_n$ is isomorphic to the mixed braid group $B_{d,1}$.
  \item Let $d=\gcd(k,n-1)$, and suppose that $d<n-1$. The centralizer of $\beta_n^k$ in $B_n$ is isomorphic to $B_{d,1}$.
  \end{enumerate}
\end{proposition}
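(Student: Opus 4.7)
The plan is to realize $\alpha_n^k$ and $\beta_n^k$ by concrete finite-order-modulo-$Z(B_n)$ homeomorphisms of $\D_n$, and then identify their centralizers with the mapping class group of a quotient orbifold whose structure makes the answer transparent.

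For (a), arrange the $n$ marked points of $\D_n$ as the vertices of a regular $n$-gon about an interior point $c$, and represent $\alpha_n$ by a rotation $\rho$ by $2\pi/n$ about $c$ composed with a compensating twist supported in a collar of $\partial\D_n$ so that the boundary is fixed pointwise. Then $\alpha_n^k$ is represented by $\rho^k$, whose underlying rotation has order exactly $n/d$ where $d=\gcd(k,n)<n$, fixes only $c$ in the interior, and acts freely on the $n$ marked points in $d$ orbits of size $n/d$. For (b), place $n-1$ of the marked points as the vertices of a regular $(n-1)$-gon about $c$ and the final marked point at $c$, and represent $\beta_n^k$ by the analogous rotation by $2\pi k/(n-1)$ about $c$. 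Since $d=\gcd(k,n-1)<n-1$, this rotation has order $(n-1)/d$, fixes the central marked point, and permutes the remaining $n-1$ marked points in $d$ free orbits.

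Next I would apply the standard identification --- a consequence of Nielsen realization for finite cyclic groups together with equivariant isotopy (Birman--Hilden--style arguments) --- between the centralizer of a finite-order mapping class and the mapping class group of the quotient orbifold, with the images of the original marked points treated as marked points and interior fixed points of $\rho^k$ remembered as distinguished (cone) marked points. In case (a), the quotient $\D_n/\langle\rho^k\rangle$ is a disk with $d$ marked points coming from the free orbits on the $n$-gon, plus a single distinguished marked point at the image of $c$; by definition this is the mixed braid group $B_{d,1}$. In case (b), the quotient is a disk with $d$ marked points from the free orbits on the outer polygon together with the image of $c$ as the one distinguished marked point; again a disk with $d+1$ marked points, one distinguished, so the centralizer is $B_{d,1}$.

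The main obstacle is the content of the middle step: the identification of the centralizer of a finite-order mapping class with the mapping class group of the quotient orbifold. This requires an equivariant isotopy theorem to produce a well-defined ``descend to the quotient'' map $C_{B_n}(\alpha_n^k)\to\Mod(\D_n/\langle\rho^k\rangle)$ with the expected kernel, and a lifting result to see surjectivity. Once this input is secured, the remaining orbit count is elementary, and one recognises $B_{d,1}$ directly from the definition of the mixed braid group preceding \Cref{lem:split}.
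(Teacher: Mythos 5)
This proposition is quoted in the paper directly from González-Meneses--Wiest \cite[Theorems 3.2 and 3.4]{GW04}; the paper supplies no proof of its own, so there is no in-paper argument to compare against. Your outline does follow the strategy of the cited proof (which GW04 attribute in part to Bessis--Digne--Michel): realize $\alpha_n$ and $\beta_n$ as rigid rotations of the disk with the marked points on a regular polygon (with an extra central marked point in the $\beta_n$ case), count orbits to get $d$ free orbits plus one distinguished fixed point, and identify the centralizer with the mapping class group of the quotient, recognised as $B_{d,1}$. The elementary parts of this --- the realization, the orbit counts, the hypotheses $d<n$ and $d<n-1$ guaranteeing the rotation is nontrivial --- are all correct.

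The step you defer, however, is not a routine application of Nielsen realization plus equivariant isotopy, and it is worth being precise about why. In $\Mod(\D_n)$ with the boundary fixed pointwise, $\alpha_n^k$ has infinite order; it is periodic only modulo the boundary Dehn twist generating $Z(B_n)$. The standard ``centralizer of a finite-order mapping class equals the mapping class group of the quotient orbifold'' correspondence applies after capping the boundary with a marked disk, i.e.\ in a spherical mapping class group, and one must then pull the answer back through the resulting central extension. Doing this bookkeeping correctly --- in particular seeing that the central $\Z$ survives and that the quotient's distinguished cone point contributes the ``$1$'' in $B_{d,1}$ rather than collapsing --- is exactly the content of \cite[Theorems 3.2 and 3.4]{GW04}, where the centralizer is first identified with a braid group of the annulus $B_d(A)$ and then with the mixed braid group $B_{d,1}\le B_{d+1}$. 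So your sketch is a faithful roadmap of the known proof, but the ``main obstacle'' you name is genuinely the whole theorem, and for the purposes of this paper the correct resolution is the citation already given.
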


\subsection{Pseudo-Anosov braids}

We will not define pseudo-Anosov braids here, we refer to \cite[Chapter 13]{FM12} for a definition. By the Nielsen--Thurston classification theorem, they are precisely the non-periodic irreducible braids.

The centralizers of pseudo-Anosov braids are well understood:

\begin{proposition}[{\cite[Proposition 4.1]{GW04}}]\label{prop:central}
  The centralizer in $B_n$ of a pseudo-Anosov braid is isomorphic to $\Z^2$.
\end{proposition}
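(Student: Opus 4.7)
The plan is to combine classical Nielsen--Thurston theory for centralizers of pseudo-Anosov mapping classes with the torsion-freeness of $B_n$ and the description $Z(B_n)=\langle z_n\rangle$ recalled above. Fix a pseudo-Anosov $f\in B_n$ and write $C\coloneqq C_{B_n}(f)$.

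First I would pass to the quotient $B_n/Z(B_n)$. Capping $\partial\D_n$ with a once-punctured disk identifies this quotient with the stabilizer of the new puncture inside $\Mod(S_{0,n+1})$, the mapping class group of the $(n+1)$-punctured sphere, and the image $\bar f$ remains pseudo-Anosov there. At this point I would invoke the standard fact that the centralizer of a pseudo-Anosov class in a finite-type mapping class group is virtually infinite cyclic: every element commuting with $\bar f$ must preserve its pair of invariant measured foliations, and the induced stretch factors form a discrete subgroup of $\R_{>0}$. Consequently $C/Z(B_n)$ is virtually $\Z$, so $C$ is virtually $\Z^2$.

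For the lower bound, since $f$ has infinite order in $B_n/Z(B_n)$ (being pseudo-Anosov) while $z_n$ has trivial image in $B_n/Z(B_n)$, no nonzero power of $f$ equals a nonzero power of $z_n$; hence $\langle z_n, f\rangle\cong\Z^2$. This subgroup lies in the center of $C$, because $z_n$ is central in all of $B_n$ and every element of $C$ commutes with $f$ by the definition of centralizer. Therefore $Z(C)$ has finite index in $C$. By Schur's theorem $[C,C]$ is finite, and since $B_n$ is torsion-free this forces $[C,C]=1$, so $C$ is abelian. A finitely generated torsion-free abelian group containing $\Z^2$ with finite index is itself $\Z^2$, which yields $C\cong\Z^2$.

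The main obstacle is the virtual cyclicity step: it is the only place genuine pseudo-Anosov dynamics enters, via the stretch-factor map with discrete image on the centralizer. A secondary technicality is ensuring that $\bar f$ remains pseudo-Anosov in $\Mod(S_{0,n+1})$ after collapsing $\partial\D_n$, which amounts to checking that the capping homomorphism sends reducing curves to reducing curves. Once these two points are secured, the remaining conclusions are formal consequences of $Z(B_n)=\langle z_n\rangle$ and of $B_n$ being torsion-free.
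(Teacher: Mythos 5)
Your argument is correct. Note, however, that the paper does not prove this proposition at all: it is imported verbatim from Gonz\'alez-Meneses--Wiest \cite[Proposition 4.1]{GW04}, so there is no in-paper proof to compare against. What you have written is a valid standalone derivation. The chain of reasoning checks out: capping $\partial\D_n$ kills exactly $Z(B_n)=\langle z_n\rangle$ and preserves the pseudo-Anosov property, so $C/Z(B_n)$ embeds in the centralizer of a pseudo-Anosov class in $\Mod(S_{0,n+1})$, which is virtually cyclic by McCarthy's theorem (your stretch-factor argument); the preimage of $\langle \bar f\rangle$ is $\langle z_n,f\rangle\cong\Z^2$, which is central in $C$ and of finite index, so Schur's theorem together with the torsion-freeness of $B_n$ forces $C$ to be abelian, hence free abelian of rank $2$. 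The one external input you should make explicit is McCarthy's result on centralizers of pseudo-Anosov mapping classes (or, equivalently, the discreteness of the stretch-factor homomorphism together with finiteness of its kernel), and the standard fact that $B_n$ is torsion-free. By comparison, Gonz\'alez-Meneses and Wiest prove a sharper statement---they exhibit explicit generators of the centralizer (a root of the pseudo-Anosov braid and a companion element)---whereas your route gets the isomorphism type more cheaply at the cost of not identifying generators; for the purposes of this paper (only \Cref{prop:central} as stated is used, via \Cref{lem:cyclic-homomorphism}) your weaker conclusion suffices.
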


By Lemma \ref{lem:cyclic-homomorphism} this implies that if $\rho\colon B_n\to B_m$ is a homomorphism and $\rho(z_n)$ is pseudo-Anosov, then $\rho$ has cyclic image.

\section{Equivalence classes of homomorphisms}\label{sec:equivalence-classes}

Following \cite{CKM23}, we define an equivalence relation on the set of homomorphisms $G\to H$ as follows:

\begin{definition}[\textbf{Equivalence of homomorphisms}]
  Two homomorphisms $G\to H$ are \emph{equivalent} if they are related by post-composition by an automorphism of $H$ and a sequence of transvections.
\end{definition}

The goal of this section is to prove \Cref{thm:equivalence-classes}, namely that there are infinitely many equivalence classes of homomorphisms $B_n\to B_m$ when $n\in\{3,4\}$ and $m\geq3$.

We will also give an explicit list of the equivalence classes of homomorphisms $B_n\to B_3$. 
To do this, we first compute the centralizers of the non-cyclic homomorphisms from \Cref{thm:bntob3}.

\subsection{Centralizers}

We will repeatedly use the following result of Gonz\'alez-Meneses and Wiest.
\begin{lemma}[{\cite[Propositions 3.3 and 3.5]{GW04}}]\label{GWlem}\
\begin{enumerate}[label=(\roman*)]
\item The centralizer of $\alpha_n$ in $B_n$ is $\langle\alpha_n\rangle$.
\item The centralizer of $\beta_n$ in $B_n$ is $\langle\beta_n\rangle$. 
\end{enumerate}
\end{lemma}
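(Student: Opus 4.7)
The plan is to use Proposition~\ref{prop:periodic} to pin each centralizer down to a copy of $\Z$, and then identify the generator by a quotient argument in $B_n/Z(B_n)$.

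For part (i), apply Proposition~\ref{prop:periodic} with $k=1$: since $d=\gcd(1,n)=1<n$, the centralizer $C_{B_n}(\alpha_n)$ is isomorphic to $B_{1,1}\cong\Z$. Since $\alpha_n^n=z_n$ generates the infinite cyclic group $Z(B_n)$, the braid $\alpha_n$ has infinite order, so $\langle\alpha_n\rangle$ is a finite-index subgroup of $C_{B_n}(\alpha_n)$. To show the index is $1$, I would take any $g\in C_{B_n}(\alpha_n)$ with $g^k=\alpha_n$ for some $k\geq1$ and argue that $k=1$.

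Pass to $B_n/Z(B_n)$, where $\bar\alpha_n$ has order exactly $n$. Setting $r=\mathrm{ord}(\bar g)$, the relation $\bar g^k=\bar\alpha_n$ gives $r/\gcd(r,k)=n$, so $r\geq n$. On the other hand $\bar g$ is periodic, hence by Proposition~\ref{prop:what-is-periodic} conjugate to a power of $\bar\alpha_n$ or $\bar\beta_n$; in particular $r$ divides $n$ or $n-1$, and hence $r\leq n$. Therefore $r=n$ and $\gcd(n,k)=1$. Now $\bar g^n=1$ gives $g^n=z_n^t$ for some $t\in\Z$, and raising $g^k=\alpha_n$ to the $n$-th power yields $z_n^{tk}=\alpha_n^n=z_n$, forcing $tk=1$ and hence $k=1$. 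This proves (i).

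Part (ii) follows the same template with $\beta_n$, the identity $\beta_n^{n-1}=z_n$, and Proposition~\ref{prop:periodic} applied to $\beta_n^1$ (where $d=\gcd(1,n-1)=1<n-1$ for $n\geq3$; the case $n=2$ is degenerate and can be checked by hand). I expect the main (minor) obstacle to be a slightly subtler order bound in this case: passing to the quotient yields $r=(n-1)\gcd(r,k)$, and we must rule out the possibility that $r$ divides $n$ rather than $n-1$. But if $r\mid n$, then combining $r\geq n-1$ with $r\mid n$ forces $r=n$ for $n\geq 3$, whence $\gcd(r,k)=n/(n-1)$, which is non-integral. Hence $r\mid(n-1)$, forcing $r=n-1$ and $\gcd(n-1,k)=1$, after which the lift-and-raise argument concludes $k=1$ exactly as before.
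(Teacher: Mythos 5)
Your proof is correct, but it takes a genuinely different route from the paper: the paper gives no proof of this lemma at all, simply citing \cite[Propositions 3.3 and 3.5]{GW04}, whereas you derive it from the other two cited ingredients already in the paper, namely \Cref{prop:periodic} (which only pins the centralizer down abstractly as $B_{1,1}\cong\Z$) and \Cref{prop:what-is-periodic}. You correctly identify the real content that must be added, namely identifying the generator of that copy of $\Z$, and your argument for this is sound: a generator $g$ of $C_{B_n}(\alpha_n)$ satisfies $g^k=\alpha_n$ (after replacing $g$ by $g^{-1}$ if needed), the relation $g^{kn}=z_n$ shows $g$ is periodic, the order computation in $B_n/Z(B_n)$ combined with the divisibility constraints from \Cref{prop:what-is-periodic} forces $r=n$ and $g^n\in\langle z_n\rangle$, and the final exponent comparison $z_n^{tk}=z_n$ forces $k=1$; the $\beta_n$ case, including the exclusion of $r\mid n$ via the non-integrality of $n/(n-1)$, goes through the same way. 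What your approach buys is a self-contained deduction of the lemma from results the paper already quotes, at the cost of a page of elementary bookkeeping that the citation avoids. One small caveat: your parenthetical that the $n=2$ case of (ii) "can be checked by hand" is slightly misleading, since there the statement actually fails ($\beta_2=\sigma_1^2$ generates $Z(B_2)$, so its centralizer is all of $B_2=\langle\sigma_1\rangle$); this is harmless because the paper only invokes the lemma for $n\in\{3,4\}$, but you should say the hypothesis $n\geq3$ is genuinely needed for (ii) rather than suggesting the small case also holds.
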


\begin{lemma}
  \label{lem:centralizer-b3tob3}
  If $\rho$ is a non-cyclic endomorphism of $B_3$, then the centralizer of $\rho(B_3)$ in $B_3$ is the center $Z(B_3)$.
\end{lemma}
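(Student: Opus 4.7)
The plan is to reduce to the explicit normal form for non-cyclic endomorphisms of $B_3$ supplied by \Cref{thm:bntob3}(iii), then compute a pair of centralizers using \Cref{GWlem} and intersect them.

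First I would verify that the quantity to be computed behaves well under the equivalences in \Cref{thm:bntob3}(iii). Post-composition by an automorphism $\phi$ of $B_3$ replaces $C_{B_3}(\rho(B_3))$ by $\phi(C_{B_3}(\rho(B_3)))$, and since $\phi$ preserves $Z(B_3)$, the conclusion for $\phi\circ\rho$ is equivalent to the conclusion for $\rho$. For a transvection $\rho'(x)=\rho(x)t(x)$ with $t\colon B_3\to Z(B_3)$, the element $t(x)$ is central, so $h\rho'(x)=\rho'(x)h$ if and only if $h\rho(x)=\rho(x)h$; hence $C_{B_3}(\rho'(B_3))=C_{B_3}(\rho(B_3))$. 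Thus I may assume $\rho(\alpha)=g\alpha g^{-1}$ and $\rho(\beta)=\beta$ for some $g\in B_3$.

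Next I would compute $C_{B_3}(\rho(B_3)) = C_{B_3}(\rho(\alpha))\cap C_{B_3}(\rho(\beta))$. By \Cref{GWlem} we have $C_{B_3}(\beta)=\langle\beta\rangle$ and $C_{B_3}(\alpha)=\langle\alpha\rangle$, so $C_{B_3}(g\alpha g^{-1})=g\langle\alpha\rangle g^{-1}$. The problem therefore reduces to showing
\[
  \langle\beta\rangle\cap g\langle\alpha\rangle g^{-1} = Z(B_3).
\]
The inclusion $Z(B_3)\subseteq\langle\beta\rangle\cap g\langle\alpha\rangle g^{-1}$ is immediate because $z=\beta^2=\alpha^3$ is central, hence fixed under conjugation by $g$.

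The main point is the reverse inclusion. Suppose $\beta^k=g\alpha^\ell g^{-1}$ for some $k,\ell\in\Z$. Abelianizing $B_3\to\Z$ via $\sigma_i\mapsto 1$, one has $\alpha\mapsto 2$ and $\beta\mapsto 3$, so $3k=2\ell$ and thus $\ell=3m$, $k=2m$ for some $m\in\Z$. But then $\alpha^\ell=\alpha^{3m}=z^m$ is central, so $g\alpha^\ell g^{-1}=z^m=\beta^{2m}$, which lies in $Z(B_3)$. This is the only step with any content; there is no real obstacle, since the coprimality of the $Z$-weights of $\alpha$ and $\beta$ instantly forces the offending exponents to be multiples of $3$ and $2$ respectively, collapsing the intersection into the centre.
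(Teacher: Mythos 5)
Your proof is correct and follows essentially the same route as the paper: reduce to the normal form from \Cref{thm:bntob3}(iii), compute $C_{B_3}(g\alpha g^{-1})=g\langle\alpha\rangle g^{-1}$ and $C_{B_3}(\beta)=\langle\beta\rangle$ via \Cref{GWlem}, and show the intersection collapses to $Z(B_3)$ by a divisibility argument on exponents. The only (cosmetic) difference is that you use the abelianization $B_3\to\Z$ where the paper uses the quotient $B_3\to S_3$; both force $3\mid\ell$ and $2\mid k$ equally quickly.
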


\begin{proof}
  By \Cref{thm:bntob3} it suffices to show that $C_{B_3}(g\alpha g^{-1})\cap C_{B_3}(\beta)=Z(B_3)$ for all $g\in G$. The image of $g\alpha g^{-1}$ in $S_3$ is a 3-cycle, and the image of $\beta$ is a 2-cycle. Therefore, the intersection of $C_{B_3}(g\alpha g^{-1})=\langle g\alpha g^{-1}\rangle$ with $C_{B_3}(\beta)=\langle\beta\rangle$ is contained in, and hence equal to, $\langle (g\alpha g^{-1})^3 \rangle = \langle\beta^2 \rangle = Z(B_3)$.
\end{proof}

\subsection{Reduction to endomorphisms of $B_3$} Next we show that Castel's conjecture, the $n=m=3$ case of \Cref{thm:equivalence-classes}, implies all other cases of \Cref{thm:equivalence-classes}.

\begin{lemma}
  \label{lem:no-collapsing}
  Inequivalent endomorphisms of $B_3$ remain inequivalent as homomorphisms after either one or both of the following operations
  \begin{enumerate}[label=(\roman*)]
  \item Post-composition with the standard inclusion $B_3\hookrightarrow B_m$
  \item Pre-composition with $R_*\colon B_4\twoheadrightarrow B_3$.
  \end{enumerate}
\end{lemma}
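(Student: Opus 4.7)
My plan is to prove the three sub-claims of \Cref{lem:no-collapsing} separately: (i) post-composition with $\mu\colon B_3 \hookrightarrow B_m$, (ii) pre-composition with $R_*\colon B_4 \twoheadrightarrow B_3$, and the combined operation. In all cases, cyclicity of the image is preserved under equivalence and any two cyclic homomorphisms out of $B_3$ are mutually equivalent (via transvection by the inverse of the homomorphism itself), so I may assume both $\rho_1,\rho_2$ are non-cyclic; \Cref{thm:bntob3} then lets me take them in the normal form $\alpha \mapsto g_i\alpha g_i^{-1}$, $\beta \mapsto \beta$, and in particular $\rho_i(z_3) = z_3$.

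For claim (ii), suppose $\phi \circ (\rho_1 R_*)^t = \rho_2 R_*$ for $\phi \in \Aut(B_3)$ and cyclic $t\colon B_4 \to C_{B_3}(\rho_1 R_*(B_4))$. Surjectivity of $R_*$ and \Cref{lem:centralizer-b3tob3} force the target of $t$ to be $Z(B_3)$. Being cyclic, $t$ factors through $B_4^{\mathrm{ab}} \cong \Z$, so $t(\sigma_1) = t(\sigma_2) = t(\sigma_3)$; combined with $R_*(\sigma_1) = R_*(\sigma_3) = \sigma_1$ and $R_*(\sigma_2) = \sigma_2$, this lets me write $t = s \circ R_*$ for a cyclic $s\colon B_3 \to Z(B_3)$. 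Substituting and using surjectivity of $R_*$ gives $\phi \circ \rho_1^s = \rho_2$, i.e., $\rho_1 \sim \rho_2$.

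For claim (i), suppose $\phi \circ (\mu\rho_1)^t = \mu\rho_2$. Using the Dyer and Grossman description of $\Aut(B_m)$ together with $\iota_m \circ \mu = \mu \circ \iota_3$, I absorb any $\iota_m$ factor of $\phi$ into $\rho_1$ and reduce to $\phi = \mathrm{conj}_h$, giving
\[
h \cdot \mu\rho_1(g) \cdot t(g) \cdot h^{-1} = \mu\rho_2(g) \quad \text{for all } g \in B_3.
\]
Let $\gamma \subset \D_m$ be a simple closed curve enclosing the first three marked points, so $\mu(z_3) = T_\gamma$, and put $\Gamma = \{\gamma\}$. Since $T_\gamma \in \mu\rho_1(B_3)$, every element of $C_{B_m}(\mu\rho_1(B_3))$ stabilizes $\gamma$ and so lies in $B_\Gamma$; combining the direct product decomposition $B_\Gamma \cong B_3 \times B_{[\Gamma]}$ from \Cref{lem:split} with \Cref{lem:centralizer-b3tob3} gives $C_{B_m}(\mu\rho_1(B_3)) = \langle T_\gamma \rangle \times B_{[\Gamma]}$. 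Cyclic $t$ factors through $B_3^{\mathrm{ab}}$, so writing $c \coloneqq t(\sigma_1) = (T_\gamma^a, c_2)$ and substituting $g = z_3$ yields $T_{h\gamma} = T_\gamma c^{-6}$. Now $h\gamma$ is essential and encloses three marked points. If $h\gamma = \gamma$, torsion-freeness of $B_m$ gives $c = 1$, and writing $h = (h_1,h_2) \in B_3 \times B_{[\Gamma]}$ the equation reads $\mu(h_1\rho_1(g)h_1^{-1}) = \mu\rho_2(g)$, so $\rho_2 = \mathrm{conj}_{h_1} \circ \rho_1$. If $h\gamma \neq \gamma$, then $T_{h\gamma} \in B_\Gamma$ forces $h\gamma$ disjoint from $\gamma$; as both enclose three marked points, $h\gamma$ must lie exterior to $\gamma$ (requiring $m \geq 6$). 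But then $T_{h\gamma}$ has trivial $B_3$-component while $T_\gamma c^{-6}$ has $B_3$-component $T_\gamma^{1-6a}$, forcing $1 - 6a = 0$, impossible for $a \in \Z$.

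For the combined operation, the cyclic transvection $t\colon B_4 \to C_{B_m}(\mu\rho_1(B_3))$ appearing in $\phi \circ (\mu\rho_1 R_*)^t = \mu\rho_2 R_*$ factors through $B_4^{\mathrm{ab}}$, hence through $R_*$ as in claim (ii), reducing the hypothesis to $\mu\rho_1 \sim \mu\rho_2$ which falls under claim (i). The main obstacle is the case $h\gamma \neq \gamma$ in claim (i); what makes it tractable is the direct product structure of $C_{B_m}(\mu\rho_1(B_3))$ provided by \Cref{lem:split}, which isolates the $B_3$-component of the Dehn twist equation and yields a contradiction via a simple divisibility check on exponents.
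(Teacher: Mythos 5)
Your overall strategy is the paper's: isolate the curve $\gamma$ enclosing the three relevant marked points, identify its stabilizer as $B_3\times B_{1,m-3}$ via \Cref{lem:split}, pin down the centralizer of the image as $Z(B_3)\times B_{1,m-3}$ using \Cref{lem:centralizer-b3tob3}, and force the automorphism to preserve $\gamma$; your treatment of (ii) via surjectivity of $R_*$ and descent of the transvection through the abelianization also matches the paper's (terser) argument. However, there are two gaps. First, equivalence is generated by a \emph{sequence} of transvections, and you only treat a single one. This matters: after transvecting by $t_1$, the element $(\mu\rho_1)^{t_1}(z_3)=T_\gamma t_1(z_3)$ is no longer $T_\gamma$, so your justification that the centralizer of the (new) image lies in $B_\Gamma$ --- which rests on $T_\gamma$ itself being in the image --- does not directly iterate. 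The paper's fix is to observe that transvections do not change the map on $[B_3,B_3]$, and that $\mu\rho_1([B_3,B_3])$ contains elements whose canonical reduction system is exactly $\{\gamma\}$; this gives a centralizer bound that is stable under repeated transvection and lets one collapse the sequence to a single transvection by induction. You need some version of this step.

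Second, there is an algebraic slip in your Dehn-twist equation: substituting $g=z_3$ gives $T_{h\gamma}=T_\gamma\cdot h\,c^{-6}h^{-1}$, not $T_\gamma c^{-6}$; the conjugation by $h$ cannot be dropped, since $h$ is not known to centralize $c$. Consequently your deduction that $T_{h\gamma}\in B_\Gamma$, and the ensuing component comparison in the case $h\gamma\neq\gamma$, do not go through as written (the element $hc^{-6}h^{-1}$ lives in the stabilizer of $h\gamma$, not of $\gamma$, so the comparison must be made in the decomposition attached to $h\gamma$). The cleaner route --- and essentially what the paper does --- is to restrict the equation to $[B_3,B_3]$, where the transvection disappears entirely: for $g\in[B_3,B_3]$ with nontrivial image in $A_3$ one has $\crs(\mu\rho_1(g))=\crs(\mu\rho_2(g))=\{\gamma\}$, and conjugation by $h$ carries one canonical reduction system to the other, forcing $h\gamma=\gamma$ outright and eliminating your second case. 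With those two repairs your argument closes up and coincides with the paper's proof.
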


\begin{proof}
  \begin{figure}
    \centering
    \begin{tikzpicture}
      \draw (0,0) circle (2);
      \node at (-1.4,0)[circle,fill,inner sep=1pt]{};
      \node at (-1,0)[circle,fill,inner sep=1pt]{};
      \node at (-0.6,0)[circle,fill,inner sep=1pt]{};
      \node at (0,0)[circle,fill,inner sep=1pt]{};
      \node at (0.7,0){$\cdots$};
      \node at (1.4,0)[circle,fill,inner sep=1pt]{};
      \draw (-1,0) ellipse (0.6 and 0.4);
      \node at (-1,-0.75) {$\gamma$};
      \draw [decorate,decoration={brace,amplitude=5pt,mirror,raise=1ex}]
      (-0.1,0) -- (1.5,0) node[midway,yshift=-1.75em]{\small $m-3$};
    \end{tikzpicture}
    \caption{}
    \label{fig:B3-in-Bm}
  \end{figure}
  Since $R_*$ is surjective, (ii) doesn't collapse equivalence classes of homomorphisms. So it suffices to show (i) doesn't collapse equivalence classes of endomorphisms of $B_3$.

  Suppose that $\varphi\colon B_3\to B_m$ is a non-cyclic endomorphism of $B_3$ postcomposed with the standard inclusion $B_3\hookrightarrow B_m$. Let $t\colon B_3\to B_{1,m-3}$ be a cyclic homomorphism. View $B_3\times B_{1,m-3}$ as a subgroup of $B_m$, namely the stabilizer of the simple closed curve $\gamma$ in \Cref{fig:B3-in-Bm} surrounding three marked points, cf. \Cref{lem:split}. We claim $C_{B_m}(\varphi^t(B_3))\leq Z(B_3)\times B_{1,m-3}$.

  First note that $\varphi$ and $\varphi^t$ agree on $[B_3,B_3]$. By the classification of endomorphisms of $B_3$, $\varphi(B_3)\leq B_3$ surjects onto $S_3$, hence $\varphi^t([B_3,B_3])=\varphi([B_3,B_3])$ surjects onto $A_3$. If $g\in\varphi^t([B_3,B_3])$ has non-trivial image in $A_3$, then its canonical reduction system in $\D_m$ is $\{\gamma\}$. By a result of Gonz\'alez-Meneses and Wiest \cite[Theorem 1.1]{GW04}, $C_{B_m}(g)=C_{B_3}(g)\times B_{1,m-3}$. In particular $C_{B_m}(\varphi^t(B_3))\leq B_3\times B_{1,m-3}$. Now \Cref{lem:centralizer-b3tob3} implies $C_{B_m}(\varphi^t(B_3))\leq Z(B_3)\times B_{1,m-3}$.

  This implies by induction that repeated transvections of $\varphi$ can be realized by one transvection in $B_3$ together with one transvection by a cyclic homomorphism $t\colon B_3\to B_{1,m-3}\leq B_m$. Suppose that $\tau\in\Aut(B_3)$, $t\colon B_3\to B_{1,m-3}$ is a cyclic homomorphism, and that $\tau(\varphi^t(B_3))\leq B_3\leq B_m$. Since $\varphi^t([B_3,B_3])=\varphi([B_3,B_3])\leq B_3$ surjects onto $A_3$, it follows that $\tau$ must preserve $\gamma$. Thus $\tau$ restricts to an automorphism of $B_3\leq B_m$. This implies $t$ is trivial.

  Hence, if two homomorphisms $\varphi_1,\varphi_2\colon B_3\to B_3\hookrightarrow B_m$ are equivalent as homomorphisms $B_3\to B_m$, then they are also equivalent as endomorphisms of $B_3$.
\end{proof}

\subsection{Equivalence classes of endomorphisms of $B_3$} Let $\rho_g$ be the endomorphism of $B_3$ defined by $\alpha\mapsto g\alpha g^{-1}$, $\beta\mapsto\beta$. By \Cref{thm:bntob3} every endomorphism of $B_3$ is equivalent to some $\rho_g$, hence \Cref{thm:equivalence-classes} is equivalent to the $\rho_g$ representing infinitely many equivalence classes. The next lemma shows that these equivalence classes are in bijective correspondence with certain double cosets in $B_3$.

\begin{lemma}\label{lem:bijective-double}
  The endomorphisms $\rho_g$ and $\rho_{g'}$ are equivalent if and only if the double cosets $\langle\beta\rangle g\langle\alpha\rangle$ and $\langle\beta\rangle g'\langle\alpha\rangle$ are equal.
\end{lemma}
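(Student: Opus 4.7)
The plan is to prove the two implications separately.

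For the backward direction, the key observations are that $\rho_{g\alpha^k}=\rho_g$ (since $\alpha^k$ commutes with $\alpha$) and that postcomposing $\rho_g$ with the inner automorphism $x\mapsto\beta^j x\beta^{-j}$ yields $\rho_{\beta^j g}$ (a direct check on generators, using that $\beta^j\beta\beta^{-j}=\beta$). Combining these, $\rho_{g'}\sim\rho_g$ whenever $g'\in\langle\beta\rangle g\langle\alpha\rangle$.

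For the forward direction, I would first normalise an arbitrary equivalence. Using the identity $(\tau\circ\varphi)^s=\tau\circ\varphi^{\tau^{-1}\circ s}$, together with pointwise multiplication of transvections (valid because the relevant centralizer is abelian), any equivalence $\rho_g\sim\rho_{g'}$ can be rewritten as $\rho_{g'}=\tau\circ\rho_g^t$ for a single $\tau\in\Aut(B_3)$ and a single cyclic homomorphism $t\colon B_3\to C_{B_3}(\rho_g(B_3))$. By \Cref{lem:centralizer-b3tob3} this centralizer equals $Z(B_3)=\langle z\rangle$, and since $B_3^{\mathrm{ab}}\cong\Z$ with $\alpha,\beta$ mapping to $2,3$, every such $t$ has the form $t(\alpha)=z^{2k}$, $t(\beta)=z^{3k}$ for some $k\in\Z$. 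By Dyer--Grossman \cite{DG81}, $\tau$ is either inner or lies in $\iota\cdot\Inn(B_3)$, and I would split into these two cases.

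In the inner case $\tau=\mathrm{conj}_h$, equating images of $\beta$ on both sides of $\rho_{g'}=\tau\circ\rho_g^t$ yields $\beta=h\beta h^{-1}z^{3k}$. Reducing modulo $Z(B_3)$ forces $\bar h$ to centralize $\bar\beta$ in $\PSL_2\Z\cong\Z/3*\Z/2$; by standard free-product theory this centralizer is $\langle\bar\beta\rangle$, so $h\in\langle\beta\rangle$, say $h=\beta^j$. The original equation then forces $z^{3k}=1$ and hence $k=0$. Equating images of $\alpha$ gives $g'\alpha(g')^{-1}=(\beta^j g)\alpha(\beta^j g)^{-1}$, so $(\beta^j g)^{-1}g'\in C_{B_3}(\alpha)=\langle\alpha\rangle$ by \Cref{GWlem}(i), whence $g'\in\beta^j g\langle\alpha\rangle\subseteq\langle\beta\rangle g\langle\alpha\rangle$ as required.

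The main obstacle I expect is ruling out the case $\tau=\iota\circ\mathrm{conj}_h$. Using $\iota(\beta)=\beta^{-1}$ and $\iota(z)=z^{-1}$, the equation on $\beta$ becomes $\iota(h)\beta^{-1}\iota(h)^{-1}=\beta z^{3k}$. Since $\bar\beta^{-1}=\bar\beta$ in $\PSL_2\Z$, the same centralizer argument forces $\iota(h)\in\langle\beta\rangle$, and substituting $\iota(h)=\beta^j$ collapses the equation to $z^{-1}=z^{3k}$. As $3\nmid -1$, this has no integer solution, so this case cannot occur, and the proof is complete.
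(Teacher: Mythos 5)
Your proof is correct and follows essentially the same route as the paper: normalise the equivalence to $\rho_{g'}=\tau\circ\rho_g^t$ via \Cref{lem:centralizer-b3tob3}, then evaluate at $\beta$ and $\alpha$ and use the cyclic centralizers $C_{B_3}(\beta)=\langle\beta\rangle$, $C_{B_3}(\alpha)=\langle\alpha\rangle$. The only (cosmetic) difference is that the paper rules out $t$ and the outer automorphism $\iota$ simultaneously by restricting the equation to $Z(B_3)$, whereas you split into the inner and outer cases of Dyer--Grossman and eliminate the latter by the evaluation at $\beta$; both are sound.
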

\begin{proof}
  First note that $\rho_g$ itself only depends on the coset $g\langle\alpha\rangle$. Conjugating by $\beta$ shows that its equivalence class only depends on the double coset $\langle\beta\rangle g\langle\alpha\rangle$.

  Conversely, suppose that $\rho_g$ is equivalent to $\rho_{g'}$. By \Cref{lem:centralizer-b3tob3}, there exists a homomorphism $t\colon B_3\to Z(B_3)$ and $\tau\in\Aut(B_3)$ such that
  \begin{equation}
    \label{eq:equivalent}
    \rho_{g'} = \tau \circ \rho_g^t.
  \end{equation}
  By restricting both sides of \eqref{eq:equivalent} to $Z(B_3)$, we can conclude that $t$ is trivial and $\tau\in\Inn(B_3)$. By evaluating both sides of \eqref{eq:equivalent} at $\beta$, we see that $\tau$ fixes $\beta$. Hence $\tau\colon g\mapsto hgh^{-1}$ for some $h\in C_{B_3}(\beta)=\langle\beta\rangle$. By evaluating both sides of \eqref{eq:equivalent} at $\alpha$ we find that $(g')^{-1}hg\in C_{B_3}(\alpha)=\langle \alpha \rangle$. Hence $\langle\beta\rangle g\langle\alpha\rangle=\langle\beta\rangle g'\langle\alpha\rangle$.
\end{proof}

Everything is now in place to prove Theorem \ref{thm:equivalence-classes}.

\begin{proof}[Proof of Theorem \ref{thm:equivalence-classes}]
  By \Cref{lem:no-collapsing,lem:bijective-double} it suffices to show that there are infinitely many double cosets $\langle\beta\rangle g\langle\alpha\rangle$. Recall that $B_3/Z(B_3)\cong\langle a\rangle*\langle b\rangle\cong\Z/3*\Z/2$, where $a$ and $b$ are the images of $\alpha$ and $\beta$ respectively. By considering the usual normal form for elements of a free product, the double cosets $\langle b\rangle (ab)^n\langle a\rangle=\{b^i(ab)^na^j : i,j\in\Z\}$ are distinct as $n$ ranges over the positive integers. Therefore, the double cosets $\langle\beta\rangle(\alpha\beta)^n\langle\alpha\rangle$ are also all distinct as $n$ ranges over the positive integers.
\end{proof}

\section{Classification of homomorphisms $B_3\to B_4$ and $B_4\to B_4$}\label{sec:B3toB3andB4}

A major goal of this section is to prove the following theorem.

\begin{theorem}
  \label{thm:B3-to-B4}
  The non-cyclic homomorphisms $B_3\to B_4$ are, up to a transvection $B_3\to Z(B_4)$ and post-composing by an automorphism of $B_4$, one of the following.
  \begin{enumerate}[label=(\roman*)]
  \item An endomorphism of $B_3$, followed by the standard inclusion $B_3\hookrightarrow B_4$.
  \item Given by $\alpha\mapsto g\beta_4g^{-1}$, $\beta\mapsto\alpha_4^2$ for some $g\in B_4$.
  \end{enumerate}
\end{theorem}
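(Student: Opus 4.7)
The plan is to analyze a non-cyclic homomorphism $\varphi\colon B_3\to B_4$ according to the Nielsen--Thurston type of $\varphi(z)$, where $z=\alpha^3=\beta^2$ generates $Z(B_3)$. Since $\varphi(B_3)\leq C_{B_4}(\varphi(z))$, the centralizer results of \Cref{prop:periodic,prop:central,GWlem} will strongly constrain $\varphi$. Two extremes are immediate: if $\varphi(z)$ is pseudo-Anosov, then \Cref{prop:central} makes its centralizer abelian, so \Cref{lem:cyclic-homomorphism}(i) forces $\varphi$ to be cyclic; and if $\varphi(z)=1$, then $\varphi$ factors through $B_3/Z(B_3)\cong\PSL_2\Z$, which is generated by torsion and so maps trivially into the torsion-free group $B_4$.

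In the periodic case, \Cref{prop:what-is-periodic} lets me conjugate by an element of $B_4$ so that $\varphi(z)$ is a power of $\alpha_4$ or $\beta_4$. The principal sub-case is $\varphi(z)\in Z(B_4)=\langle z_4\rangle$, where $\varphi$ descends to $\bar\varphi\colon\PSL_2\Z\to B_4/Z(B_4)$, and $\bar\varphi(\alpha)$, $\bar\varphi(\beta)$ have orders dividing $3$ and $2$ respectively. Using \Cref{prop:what-is-periodic} to identify these modulo center as conjugates of $\beta_4^{\pm 1}$ and $\alpha_4^2$, then composing with the inversion automorphism $\iota$ if needed, an inner automorphism to normalize $\varphi(\beta)=\alpha_4^2$, and a transvection by $B_3\to Z(B_4)$ to absorb any central discrepancy in the relation $\varphi(\alpha)^3=\varphi(\beta)^2$, I arrive at case (ii). The sub-case where $\varphi(z)$ is periodic but not central is to be ruled out: \Cref{prop:periodic} gives $C_{B_4}(\varphi(z))\cong B_{d,1}$ for some $d\leq 3$, and a direct analysis of these small mixed braid groups combined with \Cref{lem:cyclic-homomorphism} shows that no non-cyclic $\varphi$ can land in such a centralizer with the prescribed central image.

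If $\varphi(z)$ is reducible, set $\Gamma=\ocrs(\varphi(z))$; since commuting elements preserve one another's outer canonical reduction systems, $\varphi$ factors through $B_\Gamma$, whose structure is given by \Cref{lem:split}. The partition of $4$ recorded by $\Gamma$ must be among $(3,1)$, $(2,2)$, and $(2,1,1)$. The $(2,2)$ and $(2,1,1)$ cases are to be eliminated by showing that the corresponding stabilizer admits no non-cyclic homomorphism from $B_3$ with the required central image, using the semi-direct-product decomposition of \Cref{lem:split} and applying \Cref{lem:cyclic-homomorphism} to adjacent generators. In the remaining $(3,1)$ case, $B_\Gamma\cong B_3\times\Z$ and $\varphi$ projects to an endomorphism of $B_3$, classified by \Cref{thm:bntob3}; absorbing the $\Z$-component into a transvection by $B_3\to Z(B_4)$ and post-composing with the standard inclusion $B_3\hookrightarrow B_4$ gives case (i).

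The hardest step will be the reducible analysis: eliminating the $(2,2)$ and $(2,1,1)$ partitions, and verifying in the $(3,1)$ case that the $\Z$-component of $B_\Gamma$ really can be absorbed by a transvection into $Z(B_4)$, since $Z(B_4)$ is only one distinguished copy of $\Z$ inside the larger centralizer. This requires understanding how the generator of the $B_{1,1}$ factor in $B_\Gamma$ is related to the full twist $z_4$. Ruling out the non-central periodic sub-case, and treating the degenerate sub-cases of the central periodic analysis (when $\bar\varphi(\alpha)$ or $\bar\varphi(\beta)$ is trivial, so that the image could a priori still be non-cyclic after lifting), will also require some care to make sure no equivalence class of non-cyclic $\varphi$ is missed.
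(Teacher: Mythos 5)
Your proposal follows essentially the same route as the paper's proof: a trichotomy on the Nielsen--Thurston type of $\varphi(z)$, centralizer constraints from \Cref{prop:periodic,prop:central}, and a case analysis of the three outer canonical reduction systems via \Cref{lem:split}, including the key point you correctly flag in the $(3,1)$ case, namely that the complementary $\Z$-factor of $B_\Gamma\cong B_3\times\Z$ is generated by $z_3z_4^{-1}$ and hence $B_\Gamma=B_3\times Z(B_4)$. One small correction: in the non-central periodic sub-case \Cref{prop:periodic} forces $d\in\{1,2\}$, not merely $d\leq3$; this matters because $B_{3,1}$ contains the standard copy of $B_3$ and does admit non-cyclic homomorphisms from $B_3$, so your claimed elimination would fail if $d=3$ could actually occur.
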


We provide a proof of \Cref{thm:B3-to-B4} based on the Nielsen--Thurston classification theorem and canonical reduction systems. These methods were suggested to us by Farb. \Cref{thm:B3-to-B4} is also stated by Orevkov \cite[Proposition 1.9]{Ore22}. Although a proof is not provided, the methods suggested there are the same as our own.

We also prove the following theorem in \Cref{sec:pseudo-periodic}, which classifies the homomorphisms $B_3\to B_4$ satisfying certain conditions. In \Cref{sec:conf-spaces} these will be shown to be necessary conditions to be induced by a non-root holomorphic map.

\begin{theorem}
  \label{thm:b3-to-b4-pseudoperiodic}
  The homomorphism $\Psi_{3*}\colon B_3\to B_4$ given by
  \[
    \Psi_{3*}(\sigma_1) = \sigma_1\sigma_2, \qquad \Psi_{3*}(\sigma_2) = \sigma_3\sigma_2,
  \]
  is, up to automorphisms of $B_4$ and transvections $B_3\to Z(B_4)$, the only non-cyclic, irreducible homomorphism $\varphi\colon B_3\to B_4$ such that $\varphi(\sigma_1)$ has some positive power that is a multitwist.
\end{theorem}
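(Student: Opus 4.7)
The plan is to use the Nielsen--Thurston classification to reduce to an explicit parameterization. Let $\varphi\colon B_3\to B_4$ be non-cyclic, irreducible, with some $\varphi(\sigma_1)^N$ a multitwist.

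First, I would show $\varphi(z_3)\in Z(B_4)$. Since $z_3\in Z(B_3)$, the canonical reduction system $\crs(\varphi(z_3))$ is preserved by every element of $\varphi(B_3)$, so irreducibility forces $\crs(\varphi(z_3))=\emptyset$. Hence $\varphi(z_3)$ is either periodic or pseudo-Anosov, and the pseudo-Anosov case is ruled out by \Cref{prop:central}: the image would sit in $\Z^2$ and hence be cyclic. So $\varphi(z_3)$ is periodic, and by \Cref{prop:what-is-periodic} conjugate to a power of $\alpha_4$ or $\beta_4$. If $\varphi(z_3)$ is non-central then, by \Cref{prop:periodic}, $\varphi(B_3)$ lies in a centralizer isomorphic either to $B_{1,1}\cong\Z$, forcing $\varphi$ to be cyclic directly, or to $B_{2,1}$. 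I would rule out this remaining subcase by combining the multitwist condition with a root-existence argument (analogous to the one below) carried out inside $B_{2,1}\leq B_4$.

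Second, with $\varphi(z_3)=z_4^m$, the defining relation $\alpha^3=\beta^2=z_3$ gives $\varphi(\alpha)^3=\varphi(\beta)^2=z_4^m$. Since $B_4$ is torsion-free and powers of pseudo-Anosov or reducible elements are non-central, both $\varphi(\alpha)$ and $\varphi(\beta)$ are periodic. By \Cref{prop:what-is-periodic,prop:periodic}, an analysis of cube and square roots of $z_4^m$ in $B_4$ shows $\varphi(\alpha)$ is central or conjugate to $\beta_4^m$, and $\varphi(\beta)$ is central or conjugate to $\alpha_4^{2m}$. If either is central, the image is generated by a central element and one other element, hence abelian and therefore cyclic, contradicting non-cyclicity; so $\gcd(m,6)=1$. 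Transvection by a generator of the group of homomorphisms $B_3\to Z(B_4)\cong\Z$ shifts $m$ by $6$ and the inversion automorphism of $B_4$ negates $m$, so we may reduce to $m=1$.

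Finally, after post-composing with an inner automorphism of $B_4$ we may set $\varphi(\alpha)=\beta_4$. Writing $a=\varphi(\sigma_1)$ and $b=\varphi(\sigma_2)$, the condition $ab=\beta_4$ together with the braid relation $aba=bab$ yields $(a\beta_4)^2=z_4$, so $a\beta_4$ is a square root of $z_4$ and is therefore conjugate to $\alpha_4^2$. Thus $a=g\alpha_4^2 g^{-1}\beta_4^{-1}$ for some $g\in B_4$, unique modulo the double coset $\langle\beta_4\rangle\backslash B_4/C_{B_4}(\alpha_4^2)$. The main obstacle is to pin down this double coset using that some positive power of $a$ is a multitwist together with irreducibility of $\varphi$; this is the most delicate step. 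I would proceed by case analysis on $\crs(a)\subset\D_4$, considering the five possibilities: empty (i.e., $a$ periodic), a single pair curve, two disjoint pair curves, a single triple curve, or a nested pair-plus-triple configuration. Using that $\crs(b)$ is conjugate to $\crs(a)$ and that together $a$ and $b$ preserve no essential multicurve in $\D_4$, all cases except the single triple curve are excluded. In the remaining case, the periodic action of $a$ on the inner disk $\D_3$ must be $\alpha_3$ and its action on the outer region must be trivial, which uniquely identifies $\varphi$ as $\Psi_{3*}$ up to the stated equivalence.
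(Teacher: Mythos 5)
Your first two paragraphs track the paper's route: they re-derive (in slightly different words) the classification of irreducible non-cyclic $\varphi\colon B_3\to B_4$ as $\alpha\mapsto g\beta_4g^{-1}$, $\beta\mapsto\alpha_4^2$ up to transvection and automorphism (\Cref{thm:B3-to-B4} and \Cref{lem:b3-to-b4-other-cases}), and your identification of $\crs(\varphi(\sigma_1))$ as a single curve encircling three marked points is exactly \Cref{lem:pseudo-periodic-constraint}. The genuine gap is in your final sentence. Knowing that $a=\varphi(\sigma_1)$ acts as $\alpha_3$ on the inner disk and trivially outside only determines the \emph{conjugacy class} of $a$; but once $\varphi(\alpha)=\beta_4$ is fixed, $\varphi$ is determined by the actual element $a$ (via $b=a^{-1}\beta_4$), so you must still decide for which $h\in B_4$ the element $a=h(\sigma_1\sigma_2)h^{-1}$ satisfies the braid relation $aba=bab$, equivalently for which $h$ the product $\beta_4\,h(\sigma_1\sigma_2)h^{-1}$ is a square root of $z_4$. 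The answer is a proper, and a priori hard to describe, union of double cosets, and nothing in the reduction-system analysis addresses it. Likewise, your claim that the outer twisting of $a$ is trivial (i.e.\ ruling out $a$ conjugate to $(\sigma_1\sigma_2)^{6k+1}$ with $k\neq0$) does not follow from the multitwist hypothesis alone; it too requires the braid relation. In the paper, both points are handled by passing to Gassner's splitting $B_4\cong F_2\rtimes B_3$ with $F_2=\ker(R_*)=\langle x,y\rangle$, writing $\varphi(\sigma_2)=w(\sigma_1\sigma_2)^{6k+1}$ with $w\in F_2$, and solving the word equation $f^{6k+1}(w)f^{-6k-1}(w)=w$ where $f$ is conjugation by $\sigma_1\sigma_2$; the entire \Cref{sec:f2} is devoted to showing $k=0$ and $w=f^n(x)$. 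Your proposal contains no substitute for this computation, which is the actual content of the theorem.

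A secondary, smaller issue: in your case analysis you assert that the case $\crs(a)=\emptyset$ (i.e.\ $a$ periodic) is excluded by joint irreducibility, but irreducibility of $\varphi$ says nothing about a single element being periodic. The paper excludes this case by observing that $a=g\beta_4^{-1}g^{-1}\alpha_4^2$ has order $6$ in the abelianization $\Z/12$ of $B_4/Z(B_4)$, while periodic elements have order dividing $3$ or $4$ there; you would need this (or an equivalent) argument.
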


A complete proof of \Cref{thm:b3-to-b4-pseudoperiodic} will rely on the solution to an equation in the free group of rank 2 that \Cref{sec:f2} is dedicated to proving.

Chen, Kordek, and Margalit prove that every endomorphism of $B_4$ is either a transvection of the identity, or factors through the homomorphism $R_*\colon B_4\twoheadrightarrow B_3$ \cite[Theorem 8.4]{CKM23}. Together with \Cref{thm:B3-to-B4}, this implies the following classification of endomorphisms of $B_4$.

\begin{theorem}
  \label{thm:B4-to-B4}
  The non-cyclic endomorphisms of $B_4$ are, up to a transvection $B_4\to Z(B_4)$ and post-composing by an automorphism of $B_4$, one of the three following types.
  \begin{enumerate}[label=(\roman*)]
  \item The identity.
  \item The resolving quartic homomorphism $R_*\colon B_4\twoheadrightarrow B_3$, followed by an endomorphism of $B_3$, followed by the standard inclusion $B_3\hookrightarrow B_4$.
  \item The resolving quartic homomorphism $R_*\colon B_4\twoheadrightarrow B_3$, followed by a homomorphism $B_3\to B_4$ given by $\alpha\mapsto g\beta_4g^{-1}$, $\beta\mapsto\alpha_4^2$ for some $g\in B_4$.
  \end{enumerate}
\end{theorem}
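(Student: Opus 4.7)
The plan is to combine Theorem~8.4 of Chen, Kordek, and Margalit \cite{CKM23} with our \Cref{thm:B3-to-B4}. Their result asserts that any endomorphism $\varphi$ of $B_4$ is either a transvection of the identity or factors as $\varphi = \psi \circ R_*$ for some homomorphism $\psi\colon B_3 \to B_4$. Assuming $\varphi$ is non-cyclic, in the first alternative we have $\varphi = \id^t$ for some cyclic $t\colon B_4 \to Z(B_4)$, which is exactly case (i) of the theorem.

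In the second alternative, since $R_*$ is surjective and $\varphi$ is non-cyclic, $\psi$ must also be non-cyclic. Applying \Cref{thm:B3-to-B4} classifies $\psi$, up to a transvection $B_3 \to Z(B_4)$ and post-composition by an automorphism of $B_4$, as either (i) an endomorphism of $B_3$ followed by the standard inclusion $B_3 \hookrightarrow B_4$, or (ii) the map $\alpha\mapsto g\beta_4 g^{-1}$, $\beta\mapsto \alpha_4^2$. Precomposing each of these with $R_*$ produces precisely cases (ii) and (iii) of \Cref{thm:B4-to-B4}.

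The last step is to verify that the equivalence relation on homomorphisms $B_3 \to B_4$ from \Cref{thm:B3-to-B4} lifts to the equivalence relation on endomorphisms of $B_4$. Given a cyclic transvection $t\colon B_3 \to Z(B_4)$, the composition $t \circ R_* \colon B_4 \to Z(B_4)$ is also cyclic, and a direct computation gives
\[
  (\psi^t) \circ R_* = (\psi \circ R_*)^{t \circ R_*}.
\]
Thus a transvection of $\psi$ by $t$ corresponds to a transvection of $\varphi$ by $t \circ R_*$. Post-composition by an automorphism of $B_4$ trivially commutes with precomposition by $R_*$. Therefore the equivalence class of $\varphi$ is determined by the equivalence class of $\psi$, and the classification transfers without loss.

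There is no substantial obstacle here: the real work is done by the Chen--Kordek--Margalit factorization theorem and by \Cref{thm:B3-to-B4}. The proof is essentially a bookkeeping exercise, with the only mild care needed being in tracking how transvections pull back along $R_*$.
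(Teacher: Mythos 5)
Your proposal is correct and follows exactly the route the paper takes: the paper derives \Cref{thm:B4-to-B4} by citing \cite[Theorem 8.4]{CKM23} together with \Cref{thm:B3-to-B4}, leaving the transvection bookkeeping (which you spell out via $(\psi^t)\circ R_*=(\psi\circ R_*)^{t\circ R_*}$) implicit. No gaps.
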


Let $\varphi\colon B_3\to B_4$ be a non-cyclic homomorphism, and let $Z(B_3)=\langle z_3\rangle$. \Cref{thm:B3-to-B4} follows from two lemmas that we prove in \Cref{sec:reducible,sec:other-cases}.

\subsection{The reducible case}\label{sec:reducible}

\begin{lemma}\label{lem:b3-to-b4-reducible}
  If $\varphi$ is reducible, then $\varphi$ belongs to case (i) of \Cref{thm:B3-to-B4}.
\end{lemma}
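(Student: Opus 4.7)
The strategy is to consider the possible isotopy types of $\ocrs(\varphi)=\ocrs(\varphi(z_3))$ in $\D_4$, a non-trivial essential multicurve preserved by all of $\varphi(B_3)$. Its isotopy type corresponds to a partition of $4$ in which at least one part has size $\geq 2$ and which is not $(4)$ (since a curve around all four marked points is isotopic to the boundary). Hence the only possibilities are $(3,1)$, $(2,2)$, and $(2,1,1)$. I plan to rule out the latter two and show that in the first case $\varphi$ must have the form stated in case (i).

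To rule out partition $(2,2)$, I would use \Cref{lem:split} to write $B_\Gamma\cong(B_2\times B_2)\rtimes B_2$, with the outer $B_2$ acting via its projection to $S_2$ by swapping the two $B_2$ factors. If $\varphi(B_3)$ preserves each $\gamma_i$ individually, then $\varphi$ lands in the pure stabilizer $B_2\times B_2\times P_2\cong\Z^3$, which is abelian, so $\varphi$ factors through $H_1(B_3)=\Z$ and is cyclic. Otherwise, write $\varphi(\sigma_i)=c_is$ with $c_i\in\Z^3$ and $s$ a fixed lift of the swap. Since $s^2$ lies in the central $\Z^3$, the braid relation collapses to $2(c_1-c_2)=\overline{c_1-c_2}$ in $\Z^3$, where the bar denotes the swap of the first two coordinates; coordinatewise this forces $c_1=c_2$, so $\varphi(\sigma_1)=\varphi(\sigma_2)$, contradicting non-cyclicity by \Cref{lem:cyclic-homomorphism}(i).

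To rule out partition $(2,1,1)$, \Cref{lem:split} gives $B_\Gamma\cong B_2\times B_{1,2}$, where $B_{1,2}\leq B_3$ is the mixed braid group fixing the distinguished (size $2$) marked point. Projection yields $\varphi''\colon B_3\to B_{1,2}\leq B_3$. If $\varphi''$ is cyclic then $\varphi$ has abelian image and is itself cyclic. If $\varphi''$ is non-cyclic, then by \Cref{thm:bntob3}(iii) the image of $\varphi''(\alpha)$ in $S_3$ is a $3$-cycle (since transvection by a central element and post-composition by an automorphism preserve this property). But the image of $B_{1,2}$ in $S_3$ is contained in the point-stabilizer $S_2$, which contains no $3$-cycle---contradiction.

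This leaves partition $(3,1)$. Here \Cref{lem:split} gives $B_\Gamma\cong B_3\times P_2$ with trivial action (since $P_2$ is pure). The $B_3$ factor is the mapping class group of the three marked points inside $\gamma_1$, i.e., the standard subgroup $B_3\hookrightarrow B_4$ up to conjugation. The $P_2$ factor is generated by the Dehn twist about a curve enclosing all four marked points; that curve is isotopic to $\partial\D_4$, so $P_2=\langle z_4\rangle=Z(B_4)$. Writing $\varphi=(\varphi',t)$ with $\varphi'\colon B_3\to B_3$ and $t\colon B_3\to Z(B_4)$, the map $t$ automatically takes values in $C_{B_4}(\varphi'(B_3))$, and $\varphi$ is a transvection by $t$ of $\varphi'$ composed with the standard inclusion. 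Since $\varphi$ is non-cyclic and $Z(B_4)$ is abelian, $\varphi'$ must be non-cyclic as well, placing $\varphi$ in case (i). The main subtlety I expect is the bookkeeping required to identify the outer $P_2$ factor in the $(3,1)$ case with $Z(B_4)$; once this is clear the rest reduces to easy casework and abelianization arguments.
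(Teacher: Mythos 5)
Your proposal is correct and follows essentially the same route as the paper: the same case analysis on the partition of $4$ determined by $\ocrs(\varphi)$, the same braid-relation computation in $(B_2\times B_2)\rtimes B_2\cong\Z^2\rtimes\Z$ for the partition $2+2$, the same $S_3$-surjection argument for $2+1+1$, and the same identification $B_\Gamma\cong B_3\times Z(B_4)$ for $3+1$. (One cosmetic slip: the index-two subgroup $\Z^3$ is normal but not central in $B_\Gamma$; your computation only uses that $s^2$ lies in this abelian subgroup and that the images of the conjugate elements $\sigma_1,\sigma_2$ lie in a common coset, so nothing breaks.)
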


\begin{proof}[Proof of \Cref{lem:b3-to-b4-reducible}]

Suppose that $\varphi$ is reducible. Up to the action of $B_n$ there are three possible values of $\ocrs(\varphi)$. These are depicted in \Cref{fig:4-fix-curve,fig:4-fix-2-curve,fig:4-fix-3-curve}, and they correspond to the three non-trivial partitions of 4: $2+2$, $2+1+1$, and $3+1$. We consider these three cases in turn.

\begin{figure}
  \centering
  \begin{minipage}[c]{0.32\linewidth}
    \centering
    \begin{tikzpicture}
    \draw (0,0) circle (2);
    \node at (-1.2,0)[circle,fill,inner sep=1pt]{};
    \node at (-0.4,0)[circle,fill,inner sep=1pt]{};
    \node at (0.4,0)[circle,fill,inner sep=1pt]{};
    \node at (1.2,0)[circle,fill,inner sep=1pt]{};
    \draw (-0.8,0) ellipse (0.7 and 0.4);
    \node at (-0.8,-0.75) {$\gamma_1$};
    \draw (0.8,0) ellipse (0.7 and 0.4);
    \node at (0.8,-0.75) {$\gamma_2$};
  \end{tikzpicture}
  \caption{}
  \label{fig:4-fix-curve}
  \end{minipage}
  \begin{minipage}[c]{0.32\linewidth}
    \centering
\begin{tikzpicture}
\draw (0,0) circle (2);
\node at (-1.2,0)[circle,fill,inner sep=1pt]{};
\node at (-0.4,0)[circle,fill,inner sep=1pt]{};
\node at (0.4,0)[circle,fill,inner sep=1pt]{};
\node at (1.2,0)[circle,fill,inner sep=1pt]{};
\draw (-0.8,0) ellipse (0.7 and 0.4);
\node at (-0.8,-0.75) {$\gamma_1$};
\draw (0.4,0) circle (0.3);
\node at (0.4,-0.75) {$\gamma_2$};
\draw (1.2,0) circle (0.3);
\node at (1.2,-0.75) {$\gamma_3$};
\end{tikzpicture}
    \caption{}
    \label{fig:4-fix-2-curve}
  \end{minipage}
  \begin{minipage}[c]{0.32\linewidth}
    \centering
\begin{tikzpicture}
\draw (0,0) circle (2);
\node at (-1.2,0)[circle,fill,inner sep=1pt]{};
\node at (-0.4,0)[circle,fill,inner sep=1pt]{};
\node at (0.4,0)[circle,fill,inner sep=1pt]{};
\node at (1.2,0)[circle,fill,inner sep=1pt]{};
\draw (-0.4,0) ellipse (1.1 and 0.45);
\node at (-0.4,-0.75) {$\gamma_1$};
\draw (1.2,0) circle (0.3);
\node at (1.2,-0.75) {$\gamma_2$};
\end{tikzpicture}
    \caption{}
    \label{fig:4-fix-3-curve}
  \end{minipage}
  \captionsetup{labelformat=empty}
  \caption{Nontrivial outer canonical reduction systems in $B_4$.}
\end{figure}

\subsubsection{The partition $2+2$}\ \medskip

\textbf{Claim:} If $\ocrs(\varphi)$ is $\Gamma\coloneqq\{\gamma_1,\gamma_2\}$ as depicted in \Cref{fig:4-fix-curve} up to the action of $B_4$, then $\varphi$ is cyclic.

\begin{proof}
  By \Cref{lem:split}, $B_\Gamma\cong(B_2\times B_2)\rtimes B_2\cong\Z^2\rtimes\Z$, where $n\in\Z$ acts on $\Z^2$ by swapping coordinates if $n$ is odd. We present the group $\Z^2\rtimes\Z$ as $\langle x,y,z \mid [x,y]=1,\ zx=yz,\ zy=xz\rangle$, where every element of this group can be written uniquely in the form $x^ay^bz^c$. Write
  \[
    \varphi(\sigma_i) = x^{a_i}y^{b_i}z^{c_i}, \quad i=1,2.
  \]
Since $\langle x,y,z\rangle/\langle x,y\rangle\cong\langle z\rangle$ and since $\sigma_1$ and $\sigma_2$ are conjugate, we see that $c_1=c_2$. If this common value $c$ is even, then $\varphi$ is cyclic since $\langle x,y,z^2\rangle$ is abelian. So suppose that $c$ is odd. Since $\sigma_1\sigma_2\sigma_1=\sigma_2\sigma_1\sigma_2$, we find
  \[
    x^{2a_1+b_2}y^{2b_1+a_2}z^{3c} = x^{2a_2+b_1}y^{2b_2+a_1}z^{3c}.
  \]
  Hence $a_1=a_2$ and $b_1=b_2$, and so again $\varphi$ is cyclic. 
\end{proof}

\subsubsection{The partition $2+1+1$}\ \medskip

\textbf{Claim:} If $\ocrs(\varphi)$ is $\Gamma\coloneqq\{\gamma_1,\gamma_2,\gamma_3\}$ as depicted in \Cref{fig:4-fix-2-curve} up to the action of $B_4$, then $\varphi$ is cyclic.

\begin{proof}
  By \Cref{lem:split}, \[B_\Gamma\cong(B_2\times B_1\times B_1)\rtimes B_{1,2}\cong\Z\times B_{1,2}.\] So it suffices to show that any homomorphism $B_3\to B_{1,2}$ is cyclic. By \Cref{thm:bntob3}(iii), the image of any non-cyclic homomorphism $B_3\to B_3$ surjects onto $S_3$. Since $B_{1,2}<B_3$ does not surject onto $S_3$, it follows that $\varphi$ is cyclic.
\end{proof}

\subsubsection{The partition $3+1$}\ \medskip

\textbf{Claim:} If $\ocrs(\varphi)$ is $\Gamma\coloneqq\{\gamma_1,\gamma_2\}$ as depicted in \Cref{fig:4-fix-3-curve} up to the action of $B_4$, then $\varphi$ belongs to case (i) of \Cref{thm:B3-to-B4}.

\begin{proof}
  By \Cref{lem:split}, \[B_\Gamma\cong(B_3\times B_1)\rtimes B_{1,1}\cong B_3\times\Z.\] The $B_3$ factor can be identified with the standard copy of $B_3<B_4$. The $\Z$ factor produced by the section in \Cref{lem:split} is generated by $z_3z_4^{-1}$, where $z_3=(\sigma_1\sigma_2)^3$ generates the center of $B_3$ and $z_4=(\sigma_1\sigma_2\sigma_3)^4$ generates the center of $B_4$. Hence $B_\Gamma=B_3\times Z(B_4)$, and so $\varphi$ is a composition of an endomorphism of $B_3$, followed by the standard inclusion, up to transvections by a homomorphism $B_3\to Z(B_4)$ and automorphisms of $B_4$.
\end{proof}

In conclusion, if $\varphi\colon B_3\to B_4$ is reducible and non-cyclic, then it belongs to case (i) of \Cref{thm:B3-to-B4}, which completes the proof of \Cref{lem:b3-to-b4-reducible}.
\end{proof}

\subsection{The periodic case}\label{sec:other-cases}

\begin{lemma}\label{lem:b3-to-b4-other-cases}
  If $\varphi(z_3)$ is periodic, then $\varphi$ belongs to case (ii) of \Cref{thm:B3-to-B4}.
\end{lemma}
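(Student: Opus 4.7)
The plan is to work modulo the center. Write $q\colon B_4\twoheadrightarrow B_4/Z(B_4)$ and $\bar\varphi=q\circ\varphi$. First, $\varphi(z_3)\neq 1$: otherwise $\varphi(\alpha)^3=\varphi(\beta)^2=1$ in the torsion-free group $B_4$ forces $\varphi=1$. Second, $\varphi$ is cyclic if and only if $\bar\varphi$ is, because $B_3^{\mathrm{ab}}=\Z$ and a central extension of a cyclic group by $\Z$ is abelian, so any homomorphism $B_3\to\varphi(B_3)$ with abelian image factors through $\Z$. Thus $\bar\varphi$ is non-cyclic. The key intermediate claim is that $\bar\varphi(z_3)=1$, equivalently $\varphi(z_3)\in Z(B_4)$.

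Let $N$ be the order of $\bar\varphi(z_3)$ in $B_4/Z(B_4)$. By \Cref{prop:what-is-periodic}, finite-order conjugacy classes in $B_4/Z(B_4)$ are represented by powers of $\bar\alpha_4$ (orders $1,2,4$) or $\bar\beta_4$ (orders $1,3$); in particular $N\in\{1,2,3,4\}$ and $B_4/Z(B_4)$ has no elements of order $6$, $9$, or $12$. The relation $\bar\varphi(\alpha)^3=\bar\varphi(z_3)$ forces the order $m$ of $\bar\varphi(\alpha)$ to satisfy $m/\gcd(m,3)=N$, so $m\in\{N,3N\}$; the absence of large orders rules out $m=3N$ when $N\geq2$, leaving $m=N$. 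If $N=2$, then $\bar\varphi(\alpha)^3=\bar\varphi(\alpha)=\bar\varphi(z_3)$ is central in $\bar\varphi(B_3)$, so $\bar\varphi(B_3)$ is abelian, contradicting non-cyclicity. If $N=3$, then $\bar\varphi(\alpha)^3=1$, contradicting $N=3$. If $N=4$, then $\bar\varphi(\alpha)$ is conjugate to $\bar\alpha_4^{\pm1}$ and, by \Cref{GWlem}, the centralizer in $B_4/Z(B_4)$ of $\bar\varphi(z_3)=\bar\varphi(\alpha)^3$ equals the cyclic subgroup $\langle\bar\varphi(\alpha)\rangle$ of order $4$, which must contain $\bar\varphi(\beta)$; so $\bar\varphi$ is cyclic. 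Hence $N=1$.

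With $\bar\varphi(z_3)=1$, the map $\bar\varphi$ factors through $B_3/Z(B_3)\cong\PSL_2\Z$. Non-cyclicity forces $\bar\varphi(\alpha)$ and $\bar\varphi(\beta)$ to have orders $3$ and $2$, and hence to be conjugate to $\bar\beta_4^{\pm1}$ and $\bar\alpha_4^2$ respectively. Post-composing with the inversion $\iota\in\Aut(B_4)$ if needed (which sends $\bar\beta_4$ to a conjugate of $\bar\beta_4^{-1}$, as one sees by comparing abelianizations in $B_4$), and then with an inner automorphism of $B_4$, one can arrange
\[
  \varphi(\alpha)=g\beta_4 g^{-1}z_4^a, \qquad \varphi(\beta)=\alpha_4^2 z_4^b,
\]
for some $g\in B_4$ and $a,b\in\Z$. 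The identities $\beta_4^3=\alpha_4^4=z_4$ turn $\varphi(\alpha)^3=\varphi(\beta)^2$ into $z_4^{1+3a}=z_4^{1+2b}$, so $(a,b)=(2c,3c)$ for some $c\in\Z$. The transvection $t\colon B_3\to Z(B_4)$ with $t(\sigma_i)=z_4^{-c}$ then absorbs the central factors and yields the form of case~(ii) of \Cref{thm:B3-to-B4}. The main obstacle is the case $N=4$, which requires identifying the centralizer of the order-$4$ element $\bar\varphi(z_3)$ via \Cref{GWlem} and noting that it is again cyclic of order $4$.
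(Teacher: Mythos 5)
Your proof is correct, and it diverges from the paper's in one substantive place. The central case is handled essentially as in the paper; you simply make explicit the bookkeeping $z_4^{1+3a}=z_4^{1+2b}$ forced by $\alpha^3=\beta^2$ and the choice of transvection, which the paper leaves implicit. Where you take a genuinely different route is in ruling out the possibility that $\varphi(z_3)$ is periodic but not central. The paper applies \Cref{prop:periodic} directly: the centralizer in $B_4$ of a non-central periodic element is isomorphic to $B_{1,1}\cong\Z$ or $B_{2,1}$, and since $\varphi(B_3)$ lies in this centralizer while every homomorphism $B_3\to B_{2,1}$ is cyclic (non-cyclic endomorphisms of $B_3$ surject onto $S_3$), $\varphi$ would be cyclic. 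You instead pass to $B_4/Z(B_4)$ and exploit the list of orders of torsion elements ($1,2,3,4$) together with the relation $\bar\varphi(\alpha)^3=\bar\varphi(z_3)$; this is more elementary in that it replaces most of the centralizer computation by an order count, at the cost of a three-way case analysis. One small point to patch: in the case $N=4$ you invoke \Cref{GWlem} for the centralizer of $\bar\varphi(z_3)$ \emph{in the quotient} $B_4/Z(B_4)$, but that lemma computes centralizers in $B_4$ itself, and in general the centralizer of an image can be strictly larger than the image of the centralizer. The statement you need is still true: if $h\alpha_4h^{-1}=\alpha_4z_4^{k}$, then comparing images in $B_4^{\mathrm{ab}}\cong\Z$ gives $3=3+12k$, so $k=0$ and $h\in C_{B_4}(\alpha_4)=\langle\alpha_4\rangle$; this one-line argument should be added to justify that $C_{B_4/Z(B_4)}(\bar\varphi(z_3))$ is the cyclic group of order $4$ generated by $\bar\varphi(\alpha)$.
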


\begin{proof}
  We consider two cases.

\subsubsection{Periodic but not central}\ \medskip

\textbf{Claim:} If $\varphi(z_3)$ is periodic but not central, then $\varphi$ is cyclic.

\begin{proof}
  By \Cref{prop:periodic}, the centralizer of $\varphi(z_3)$ is isomorphic to $B_{d,1}$ where $d$ is a proper divisor of either 3 or 4, i.e., $d=1$ or $d=2$. If $d=1$, then $B_{d,1}\cong\Z$ and so $\varphi$ is cyclic. If $d=2$, then $B_{d,1}=B_{2,1}$. By \Cref{thm:bntob3}, the image of any non-cyclic endomorphism of $B_3$ surjects onto $S_3$. Thus any homomorphism $B_3\to B_{2,1}$ is cyclic. Therefore $\varphi$ is cyclic.
\end{proof}

\subsubsection{Central}\ \medskip

\textbf{Claim:} If $\varphi(z_3)$ is central, then $\varphi$ belongs to case (ii) of \Cref{thm:B3-to-B4}.

\begin{proof}
  Since $\varphi(Z(B_3))\leq Z(B_4)$, $\varphi$ descends to a homomorphism
  \[
    \psi\colon B_3/Z(B_3) \to B_4/Z(B_4).
  \]
  Let $a,b$ denote the images of $\alpha,\beta$ in $Z/Z(B_3)$. If $\psi(a)$ or $\psi(b)$ is trivial, then $\psi$ is cyclic, and hence $\varphi$ is cyclic, contradiction. Hence $\psi(a)$ has order 3 and $\psi(b)$ has order 2. By \Cref{prop:what-is-periodic}, possibly after replacing $\varphi$ with $\iota_4\circ\varphi$, we find that $\psi(a)$ and $\psi(b)$ are conjugate to the images of $\beta_4$ and $\alpha_4^2$ in $ B_4/Z(B_4)$, respectively. By the relation $\alpha^3=\beta^2$ it follows that, up to a transvection $B_3\to Z(B_4)$ and an automorphism of $B_4$, $\varphi$ belongs to case (ii) of \Cref{thm:B3-to-B4}.
\end{proof}

This completes the proof of \Cref{lem:b3-to-b4-other-cases}, and hence \Cref{thm:B3-to-B4}.
\end{proof}

\subsection{Proof of Theorem~\ref{thm:b3-to-b4-pseudoperiodic}}\label{sec:pseudo-periodic}

We now classify the irreducible homomorphisms $\varphi\colon B_3\to B_4$ such that $\varphi(\sigma_1)$ and $\varphi(\sigma_2)$ have some powers which are multitwists. This will be used in \Cref{sec:conf-spaces}. The following lemma imposes a strong constraint on the image of $\sigma_1$.

\begin{lemma}
  \label{lem:pseudo-periodic-constraint}
  Let $\varphi\colon B_3\to B_4$ be an irreducible, non-cyclic homomorphism such that $\varphi(\sigma_1)$ has a power that is a multitwist. Then there exist $\tau\in\Aut(B_4)$ and $t\colon B_3\to Z(B_4)$ such that $\tau\circ\varphi^t$ maps $\sigma_1\mapsto (\sigma_1\sigma_2)^{6k+1}$ for some $k\in\Z$.
\end{lemma}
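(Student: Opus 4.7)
The plan is to mirror the Nielsen--Thurston arguments of \Cref{lem:b3-to-b4-reducible,lem:b3-to-b4-other-cases}. Since $\varphi$ is non-cyclic and irreducible, $\varphi(z_3)$ itself is irreducible; \Cref{prop:central} combined with \Cref{lem:cyclic-homomorphism} rules out the pseudo-Anosov case (forcing the image into an abelian $\Z^2$, contradicting non-cyclicity), and the periodic-non-central case is ruled out exactly as in \Cref{lem:b3-to-b4-other-cases}. So $\varphi(z_3)$ is central, and the central subcase places $\varphi$, up to a transvection by $B_3\to Z(B_4)$ and an automorphism of $B_4$, in the form $\varphi(\alpha)=g\beta_4g^{-1}$, $\varphi(\beta)=\alpha_4^2$, whence $\varphi(\sigma_1)=g\beta_4^{-1}g^{-1}\alpha_4^2$. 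Next I would use the exponent-sum homomorphism $e\colon B_4\to\Z$, $e(\sigma_i)=1$: directly $e(\varphi(\sigma_1))=-4+6=2$, and the permitted equivalences shift this only by multiples of $e(z_4)=12$ (transvection) or by sign (via $\iota_4$), so $e(\varphi(\sigma_1))\equiv\pm 2\pmod{12}$. By \Cref{prop:what-is-periodic}, periodic braids in $B_4$ are conjugate to powers of $\alpha_4$ or $\beta_4$ modulo $Z(B_4)$, with exponent sums $3j$ or $4j$ modulo $12$, lying in $\{0,3,4,6,8,9\}\pmod{12}$, never $\pm 2$. So $\varphi(\sigma_1)$ is not periodic, and it is not pseudo-Anosov either (any power is pseudo-Anosov, not a multitwist). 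Hence $\varphi(\sigma_1)$ is reducible.

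The next phase analyzes $\Gamma\coloneqq\crs(\varphi(\sigma_1))$, which up to the $B_4$-action is of type $2+2$, $2+1+1$, or $3+1$. From $\sigma_2=\beta\sigma_1\beta^{-1}$ we get $\varphi(\sigma_2)=\alpha_4^2\varphi(\sigma_1)\alpha_4^{-2}$, so $\crs(\varphi(\sigma_2))=\alpha_4^2\cdot\Gamma$. Since the braid action on isotopy classes of simple closed curves in $\D_4$ factors through permutations of the marked points, and $\alpha_4^2$ has permutation $(1\,3)(2\,4)$, in the $2+2$ case this swaps the two components of $\Gamma$, so $\crs(\varphi(\sigma_2))=\Gamma$ is a shared essential multicurve, making $\varphi$ reducible---contradiction. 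In the $2+1+1$ case, preserving the 2-subset of its own CRS forces $\pi_{\varphi(\sigma_1)}\in S_{\{1,2\}}\times S_{\{3,4\}}$ (taking the curve to enclose $\{1,2\}$); since this subgroup of $S_4$ is preserved by conjugation by $(1\,3)(2\,4)$, also $\pi_{\varphi(\sigma_2)}$ lies there, so both braids preserve the multicurve $\Gamma\cup\alpha_4^2\Gamma$, again contradicting irreducibility. Hence $\Gamma$ is of type $3+1$; by an inner automorphism of $B_4$ we place $\Gamma=\{\gamma\}$ with $\gamma$ the standard curve around $\{1,2,3\}$, and \Cref{lem:split} gives $\varphi(\sigma_1)\in B_3\times\langle z_4\rangle$. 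A transvection by $B_3\to\langle z_4\rangle$ removes the central factor, leaving $\varphi(\sigma_1)=x\in B_3\subset B_4$.

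Finally, identify $x$. Since $\crs_{B_4}(x)=\{\gamma\}$ excludes any curve interior to $\D_3$, the element $x$ is irreducible in $B_3$. As $x^N$ is a multitwist (in $B_4$, hence also in $B_3$), $x$ cannot be pseudo-Anosov in $B_3$, so by Nielsen--Thurston it is periodic. Applying \Cref{prop:what-is-periodic} in $B_3$ and a further $B_3$-inner conjugation, $x=\alpha_3^j$ or $x=\beta_3^j$ for some $j\in\Z$. To eliminate the $\beta_3$ case, note that $\varphi(\alpha)=x\cdot\alpha_4^2x\alpha_4^{-2}$ is conjugate to $\beta_4$, whose permutation is the 3-cycle $(1\,3\,4)$; so $\pi_{\varphi(\alpha)}$ must be a 3-cycle. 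But for $x=\beta_3^j$ one has $\pi_x\in\{1,(1\,3)\}$, and a direct computation gives $\pi_{x\cdot\alpha_4^2x\alpha_4^{-2}}=1$, contradicting 3-cycle. So $x=\alpha_3^j$, and the same computation forces $j\not\equiv 0\pmod 3$. The constraint $e(x)=2j\equiv\pm 2\pmod{12}$ then pins $j\equiv\pm 1\pmod 6$; finally, $\iota_4\in\Aut(B_4)$ sends $\alpha_3$ to $\sigma_1^{-1}\sigma_2^{-1}$, which is $B_3$-conjugate to $\alpha_3^{-1}$ via $\sigma_1$, allowing us to flip the sign of $j$ and take $j=6k+1$ for some $k\in\Z$, yielding $\varphi(\sigma_1)=(\sigma_1\sigma_2)^{6k+1}$. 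The main technical points are the $2+1+1$ exclusion in the CRS analysis (hinging on the invariance of $S_{\{1,2\}}\times S_{\{3,4\}}$ under conjugation by $(1\,3)(2\,4)$) and the permutation computation separating the $\alpha_3^j$ and $\beta_3^j$ cases.
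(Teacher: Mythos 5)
Your overall strategy matches the paper's: reduce to the normal form $\varphi(\alpha)=g\beta_4g^{-1}$, $\varphi(\beta)=\alpha_4^2$ via \Cref{thm:B3-to-B4}, show $\varphi(\sigma_1)$ is reducible by an exponent-sum argument (the paper phrases this as $\varphi(\sigma_1)$ having order $6$ in the abelianization $\Z/12$ of $B_4/Z(B_4)$, which is the same computation), locate its canonical reduction system, and then work inside $B_{\{\gamma\}}\cong B_3\times Z(B_4)$. Your endgame --- killing the central factor by a transvection, identifying $x$ as a periodic element of $B_3$, and pinning the exponent modulo $6$ via exponent sums and $\iota_4$ --- is sound.

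The middle step, identifying $\crs(\varphi(\sigma_1))$, rests on a false premise. You assert that the braid action on isotopy classes of simple closed curves in $\D_4$ factors through permutations of the marked points. It does not: $\sigma_1^2$ induces the trivial permutation but moves most curves. Consequently, in the $2+2$ case, $\alpha_4^2\cdot\Gamma$ encloses the same pairs of marked points as $\Gamma$ but need not equal $\Gamma$ as a multicurve (if $\Gamma=h\cdot\Gamma_0$ with $\Gamma_0$ standard, you would need $h^{-1}\alpha_4^2h$ to preserve $\Gamma_0$, which is not automatic); and in the $2+1+1$ case the inference from ``$\pi_{\varphi(\sigma_2)}$ preserves $\{1,2\}$'' to ``$\varphi(\sigma_2)$ preserves the multicurve $\Gamma\cup\alpha_4^2\Gamma$'' is invalid for the same reason. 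You also implicitly assume that outer type $3+1$ forces $\crs(\varphi(\sigma_1))$ to be a single curve, overlooking a possible nested curve around two of the three enclosed points, which would break your later claim that $x$ is irreducible in $B_3$.

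All three problems are repaired at once by the observation the paper uses: $\varphi(\sigma_1)=g\beta_4^{-1}g^{-1}\alpha_4^2$ induces a 3-cycle on the four marked points (in $A_4$ the product of a 3-cycle and a double transposition is again a 3-cycle), and every curve in $\crs(\varphi(\sigma_1))$ must enclose a subset of marked points invariant under some power of this permutation. A 3-cycle on four points preserves no 2-element subset, so the canonical reduction system is exactly one curve around the three permuted points. This same observation also replaces your roundabout elimination of $x=\beta_3^j$ at the end, since $\beta_3^j$ induces a transposition or the identity rather than a 3-cycle.
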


\begin{proof}
  Since $\varphi$ irreducible and non-cyclic, without loss of generality by \Cref{thm:B3-to-B4} we may assume $\varphi$ is given by $\alpha\mapsto g\beta_4g^{-1}$, $\beta\mapsto\alpha_4^2$ for some $g\in B_4$. Thus $\varphi(\sigma_1)=g\beta_4^{-1}g^{-1}\alpha_4^2$. Suppose that $\varphi(\sigma_1)$ preserves no essential simple closed curve. Then $\varphi(\sigma_1)$ is periodic. However, the image of $\varphi(\sigma_1)$ has order 6 in the abelianization $\Z/12\Z$ of $B_4/Z(B_4)$, whereas periodic elements have orders dividing 3 or 4 in $B_4/Z(B_4)$. Therefore, $\varphi(\sigma_1)$ preserves some essential simple closed curve. 

Since $\varphi(\sigma_1)$ induces a permutation of the marked points that is a 3-cycle in $S_4$ then, up to conjugation in $B_4$, the canonical reduction system of $\varphi(\sigma_1)$ consists of the single curve $\gamma\coloneqq\gamma_1$ in \Cref{fig:4-fix-3-curve}. Using the isomorphism $B_{\{\gamma\}}\cong B_3\times B_{1,1}=B_3\times Z(B_4)$ given by \Cref{lem:split} we see that, up to transvections $B_3\to Z(B_4)$, some power of $\varphi(\sigma_1)$ is conjugate to a power of $\sigma_1\sigma_2$, as these are the only periodic elements of $B_3$ having order 3 in $B_3/Z(B_3)$. Since $\varphi(\sigma_1)$  has order 6 in the abelianization $\Z/12$ of $B_4/Z(B_4)$, it follows that $\varphi(\sigma_1)$ is conjugate to a power $(\sigma_1\sigma_2)^{6k+1}$ for some $k\in\Z$.
\end{proof}

It remains to determine the possible values of $\varphi(\sigma_2)\in B_4$ under the hypothesis that $\varphi(\sigma_1)=(\sigma_1\sigma_2)^{6k+1}$. To aid us in this we invoke the following result of Gassner \cite{Gas61}.

\begin{theorem}
  \label{thm:gassner}
  The kernel of $R_*\colon B_4\to B_3$ is freely generated by
  \[
    x\coloneqq\sigma_1^{-1}\sigma_3, \quad \text{ and } \quad y\coloneqq x^{-1}\sigma_2^{-1}x\sigma_2.
  \]
  The action of the subgroup $B_3<B_4$ on $\ker(R_*)$ by conjugation is given by
  \[
    \sigma_1^{-1}x\sigma_1 = x, \quad \sigma_2^{-1}x\sigma_2 = xy, \quad \sigma_1^{-1}y\sigma_1 = yx^{-1}, \quad \sigma_2^{-1}y\sigma_2 = y.
  \]
\end{theorem}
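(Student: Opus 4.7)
The plan is to exploit the fact that $R_*$ admits a section. The standard inclusion $s\colon B_3\hookrightarrow B_4$ given by $\sigma_i\mapsto\sigma_i$ for $i=1,2$ satisfies $R_*\circ s=\id_{B_3}$ by direct verification on generators, so $B_4\cong\ker(R_*)\rtimes B_3$. Adding the relation $\sigma_1=\sigma_3$ (equivalently $x=1$) to Artin's presentation of $B_4$ recovers Artin's presentation of $B_3$, so $\ker(R_*)$ is the normal closure of $x$ in $B_4$. Combined with the semidirect decomposition, this says $\ker(R_*)$ is generated \emph{as an abstract group} by the $B_3$-conjugates of $x$ under the section.

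Next, I would verify the four conjugation formulas by direct computation in $B_4$. The identity $\sigma_1^{-1}x\sigma_1=x$ follows immediately from $[\sigma_1,\sigma_3]=1$, and $\sigma_2^{-1}x\sigma_2=xy$ is a restatement of the definition of $y$. The remaining two formulas $\sigma_1^{-1}y\sigma_1=yx^{-1}$ and $\sigma_2^{-1}y\sigma_2=y$ are proved by expanding $y=\sigma_3^{-1}\sigma_1\sigma_2^{-1}\sigma_1^{-1}\sigma_3\sigma_2$ and repeatedly applying the braid relations (used in consequences such as $\sigma_2\sigma_1\sigma_2^{-1}=\sigma_1^{-1}\sigma_2\sigma_1$ and its $(\sigma_2,\sigma_3)$ analogue) together with $[\sigma_1,\sigma_3]=1$. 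Once these four identities are established, $\langle x,y\rangle$ is a $B_3$-invariant subgroup of $B_4$ that contains $x$, so the previous paragraph forces $\ker(R_*)=\langle x,y\rangle$.

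The main obstacle is proving that $x$ and $y$ generate \emph{freely}. My approach is to invoke the faithful Artin representation $B_4\hookrightarrow\Aut(F_4)$, where $F_4=\langle a_1,a_2,a_3,a_4\rangle$, compute the automorphisms of $F_4$ induced by $x$ and $y$ explicitly, and then exhibit a pair of subsets of $F_4$ on which these two automorphisms play ping-pong; any nontrivial reduced word in $x,y$ would then act nontrivially on $F_4$, and hence be nontrivial in $B_4$. A conceptually cleaner alternative is topological: the holomorphic map $R\colon\Conf_4\C\to\Conf_3\C$ is a locally trivial fibration over the aspherical base $\Conf_3\C$, so the long exact sequence of homotopy groups identifies $\ker(R_*)$ with the fundamental group of a fiber. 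Identifying this fiber explicitly as an open Riemann surface and computing its Euler characteristic then shows its fundamental group is free of rank~$2$, completing the proof.
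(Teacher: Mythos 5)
Your reduction of the generation statement is fine in outline, but the heart of this theorem is the \emph{freeness} of $\langle x,y\rangle$, and that is exactly where the proposal does not deliver. (For what it is worth, the paper does not reprove this either: its proof is a citation of Gassner's theorem, which gives the free generating set and the conjugation action.) Your preferred topological route fails at its first step: $R\colon\Conf_4\C\to\Conf_3\C$ is \emph{not} a locally trivial fibration. It has critical points inside $\Conf_4\C$: at an ordered configuration $(a,-a,b,-b)$ with $a,b\neq0$, $a\neq\pm b$, the differentials of $x_1x_4+x_2x_3$ and $x_1x_3+x_2x_4$ are $(-b,b,-a,a)$ and $(b,-b,a,-a)$, so $d\tilde{R}$ has rank $2<3$ there, and hence so does $dR$. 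Worse, the fiber of $R$ through such a configuration is singular at it: writing the fiber in the coefficients $e_1,e_3$ (with $e_2$ fixed and $e_4$ eliminated using the symmetric functions of the three resolvent roots) one gets the plane curve $4e_3^2+e_1^3e_3+4(a^2+b^2)e_1e_3+4a^2b^2e_1^2=0$, which has a node at $e_1=e_3=0$, i.e.\ at the quartic $(t^2-a^2)(t^2-b^2)$, a genuine point of $\Conf_4\C$. So nearby fibers are smooth while this one is not even a manifold, local triviality fails over the critical values, and the long-exact-sequence identification of $\ker(R_*)$ with $\pi_1$ of a fiber is unjustified. Your alternative route via the Artin embedding $B_4\hookrightarrow\Aut(F_4)$ is only a hope as written: no ping-pong sets are exhibited, and producing them (or any substitute argument) is essentially the content of the freeness statement. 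As it stands, freeness is not proved.

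Two smaller points on the generation half. The inference ``combined with the semidirect decomposition, $\ker(R_*)$ is generated by the $B_3$-conjugates of $x$'' does not follow as stated: in a split extension, the normal closure of a subset $S$ of the kernel is the normal closure \emph{inside the kernel} of the set of $H$-conjugates of $S$, not in general the subgroup those conjugates generate. Fortunately your final step can be repaired without this claim: once the four conjugation formulas are verified, $M\coloneqq\langle x,y\rangle$ is carried into itself by conjugation by $\sigma_1^{\pm1}$ and $\sigma_2^{\pm1}$ (the formulas invert to $\sigma_2x\sigma_2^{-1}=xy^{-1}$, $\sigma_1y\sigma_1^{-1}=yx$), and since $\sigma_3=\sigma_1x$ with $x\in M$, also by $\sigma_3^{\pm1}$; hence $M$ is normal in $B_4$, contains $x$, and therefore contains the normal closure of $x$, which your presentation argument correctly identifies with $\ker(R_*)$; the reverse inclusion is clear since $R_*(x)=R_*(y)=1$. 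With that one-line patch the generation and conjugation-action parts are fine; the genuine gap is freeness, which the paper handles by invoking Gassner.
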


\begin{proof}
  Gassner shows that $\ker(R_*)$ is freely generated by $x^{-1}=\sigma_3^{-1}\sigma_1$ and $yx^{-1}=\sigma_2\sigma_1\sigma_3^{-1}\sigma_2^{-1}$ \cite[Theorem 7]{Gas61}, and gives an equivalent description of the action of $B_3$ on these generators.
\end{proof}

Since the standard inclusion $B_3\hookrightarrow B_4$ is a section of $R_*$ the short exact sequence
\[
  1 \to F_2 \to B_4 \xrightarrow{R_*} B_3 \to 1
\]
splits, where $F_2$ is the free group generated by $x$ and $y$. \Cref{thm:gassner} thus gives an explicit description of the semi-direct product decomposition of $B_4=F_2\rtimes B_3$.

Let $f\in\Aut(F_2)$ be defined by $w\mapsto(\sigma_1\sigma_2)^{-1}w(\sigma_1\sigma_2)$. By \Cref{thm:gassner} we have
\[
  f(x) = xy, \qquad f(y) = x^{-1}.
\]
In the proof of \Cref{thm:b3-to-b4-pseudoperiodic} we will show that classifying the irreducible, non-cyclic homomorphisms $\varphi\colon B_3\to B_4$ with $\varphi(\sigma_1)$ having some power that is a multitwist, is equivalent to solving the following delicate equation in $F_2$ involving the automorphism $f$
\[
  f^{6k+1}(w)f^{-6k-1}(w) = w, \quad w\in F_2.
\]
Solving this equation is the purpose of \Cref{sec:f2}, see \Cref{lem:f2-aut}. There we show that if there is a non-identity solution, then $k=0$ and $w=f^n(x)$ for some $n\in\Z$. 

\begin{proof}[Proof of \Cref{thm:b3-to-b4-pseudoperiodic}]
  Let $\varphi\colon B_3\to B_4$ be a non-cyclic, irreducible homomorphism such that some power of $\varphi(\sigma_1)$ is a multitwist. We seek $\tau\in\Aut(B_4)$ and $t\colon B_3\to Z(B_4)$ such that $\Psi_{3*}=\tau\circ\varphi^t$.

  By \Cref{lem:pseudo-periodic-constraint}, we can assume without loss of generality that $\varphi(\sigma_1)=(\sigma_1\sigma_2)^{6k+1}$ for some integer $k$. By \Cref{thm:bntob3}, $R_*\circ\varphi$ is cyclic, so there exists $w\in F_2$ such that $\varphi(\sigma_2)=w(\sigma_1\sigma_2)^{6k+1}$. The braid relation $\varphi(\sigma_1\sigma_2\sigma_1)=\varphi(\sigma_2\sigma_1\sigma_2)$ is equivalent to
  \[
    f^{6k+1}(w)f^{-6k-1}(w) = w.
  \]
  By \Cref{lem:f2-aut}, $k=0$ and $w=f^n(x)$ for some $n\in\Z$. Thus
  \[
    \varphi(\sigma_2) = f^n(x)(\sigma_1\sigma_2) = (\sigma_1\sigma_2)^{-n}(\sigma_1^{-1}\sigma_3)(\sigma_1\sigma_2)^{n+1} = (\sigma_1\sigma_2)^{-n}(\sigma_3\sigma_2)(\sigma_1\sigma_2)^n.
  \]
  Hence we may take $t$ to be trivial, and $\tau\colon g\mapsto(\sigma_1\sigma_2)^nx(\sigma_1\sigma_2)^{-n}$.
\end{proof}

\section{Solution to the equation in the free group of rank 2}
\label{sec:f2}

Let $x,y$ be a free basis of the free group $F_2$ of rank 2, and let $f\in\Aut(F_2)$ be the automorphism defined by
\[
  f(x) \coloneqq xy, \qquad f(y) \coloneqq x^{-1}.
\]

The goal of this section is to prove the following theorem.

\begin{theorem}
  \label{lem:f2-aut}
  If $w\in F_2$ is a non-identity word satisfying
  \[
    f^{6k+1}(w)f^{-6k-1}(w) = w,
  \]
  then $k=0$ and $w=f^n(x)$ for some $n\in\Z$.
\end{theorem}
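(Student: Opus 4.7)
My plan hinges on two structural identities: $f(d) = d$ and $f^6 = \Inn_d$, where $d := [x,y] = xyx^{-1}y^{-1}$. Both can be verified by short direct computations: the first from $[f(x), f(y)] = [xy, x^{-1}] = xyx^{-1}y^{-1}$, and the second by computing $f^6(x)$ and $f^6(y)$ explicitly. Since $f$ fixes $d$, it commutes with $\Inn_{d^k}$, giving $f^{6k+1}(w) = d^k f(w) d^{-k}$ and $f^{-(6k+1)}(w) = d^{-k} f^{-1}(w) d^k$, so the equation rearranges to
\[
d^k f(w) d^{-2k} f^{-1}(w) d^k = w,
\]
isolating the $k$-dependence into a ``twist by $d^k$''.

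When $k = 0$ the equation reduces to $f(w) f^{-1}(w) = w$. The verification $w = x$ is immediate: $xy \cdot y^{-1} = x$. Applying $f$ to the equation gives $f^2(w) \cdot w = f(w)$, so $f(w)$ is again a solution; hence $f^n(x)$ is a solution for every $n \in \Z$. For uniqueness in the $k = 0$ case, set $w_n := f^n(w)$ for a hypothetical solution $w$; the equation is equivalent to the Fibonacci-like recurrence $w_{n+1} = w_n w_{n-1}^{-1}$. Combined with the forced identity $w_6 = dwd^{-1}$ (from $f^6 = \Inn_d$), this determines the entire $f$-orbit of any solution as an explicit word in $w_0 = w$ and $w_1 = f(w)$, and comparing with the actual action of $f$ should force $w \in \{f^n(x) : n \in \Z\}$.

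The hardest step will be ruling out non-identity solutions for $k \neq 0$. A class-2 nilpotent (Heisenberg) quotient $F_2/[F_2,[F_2,F_2]]$ is useless here: in that quotient $\Inn_d$ is trivial, so $f^{6k+1}$ and $f$ agree and the $k$-dependence vanishes entirely. I would instead either pass to a higher nilpotent quotient of $F_2$, or use a direct word-length/normal-form argument exploiting that the centralizer of $d$ in the free group $F_2$ is the cyclic group $\langle d\rangle$, to show that the extra twists by $d^{\pm k}$ in the reformulated equation cannot be absorbed into any non-identity $w$. This delicate step, which must pin down solutions for all values of $k$ simultaneously, is where I expect the main technical difficulty to lie.
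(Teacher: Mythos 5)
Your two structural identities ($f(c)=c$ with $c=[x,y]$, and $f^6=\Inn_c$) are correct and are exactly the paper's starting point, and your rewriting of the equation as $c^kf(w)c^{-2k}f^{-1}(w)c^k=w$ is valid. But the proposal stops precisely where the proof has to begin, in both halves. For $k\neq0$ you openly defer the argument: you name two possible strategies (a higher nilpotent quotient, or a normal-form argument using $C_{F_2}(c)=\langle c\rangle$) without carrying out either, so nonexistence of solutions for $k\neq0$ is simply not established. The paper's actual mechanism is quantitative: it defines $L(w)$ and $R(w)$ as the maximal powers of $c$ with which $w$ begins and ends, proves these are non-decreasing along the $f$-orbit (\Cref{lem:non-decreasing}) and increase by exactly $1$ every six steps eventually, and combines this with a lemma saying a product $w_1w_2$ cannot change both its leading and trailing $c$-power simultaneously (\Cref{lem:free-group-product}); feeding $w_n=w_{n+6k+1}w_{n-6k-1}$ into this forces $k=0$. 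Nothing in your sketch substitutes for this.

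The $k=0$ uniqueness step has the same problem. The recurrence $w_{n+1}=w_nw_{n-1}^{-1}$ together with $w_6=cwc^{-1}$ are consequences of the equation, not a characterization of its solutions: written out, $w_6$ is a fixed word in $w_0$ and $w_1=f(w_0)$, and the resulting single relation does not pin down $w_0$ without invoking the detailed combinatorics of how $f$ acts on reduced words --- which is exactly the content you compress into ``comparing with the actual action of $f$ should force $w\in\{f^n(x)\}$.'' The paper needs, for this, a syllable-by-syllable description of how $f$ transforms the first and last letters of a reduced word (\Cref{lem:start-end}), the rigidity statement that if $w$ and $f(w)$ share a common prefix of length $4$ then that prefix is $c^{\pm1}$ (\Cref{lem:same-start-end}), and a resulting cancellation bound showing $\ell(w)\leq6$, after which a finite check completes the argument (\Cref{lem:f-orbit-x}). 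As written, your proposal is a correct framing of the problem with both substantive steps left as intentions rather than proofs.
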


\subsection{Basic properties of $f$}

Note that $f$ has order 6 in $\GL_2\Z$. In fact:
\[
  f^6(w) = cwc^{-1}, \quad \text{where } c\coloneqq[x,y]=xyx^{-1}y^{-1}.
\]

\begin{lemma}
  \label{lem:f-fixed}
  The set of words fixed by $f$ is the subgroup $\langle c\rangle$.
\end{lemma}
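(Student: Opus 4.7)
The plan is to exploit the identity $f^6(w) = cwc^{-1}$ displayed just before the lemma, which encodes the fact that $f$ has order six in $\Out(F_2)$.

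First I would handle the easy inclusion $\langle c\rangle \subseteq \mathrm{Fix}(f)$ by a direct calculation: $f(c) = (xy)(x^{-1})(xy)^{-1}x = xyx^{-1}y^{-1} = c$, so every power of $c$ is fixed.

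For the reverse inclusion, suppose $f(w)=w$ and apply $f$ five more times to obtain $w = f^6(w) = cwc^{-1}$, so $w$ commutes with $c$ in $F_2$. I would then invoke the standard fact that the centralizer of a non-identity element of a free group is infinite cyclic; equivalently, two elements of a free group commute if and only if they are powers of a common element. This places $w$ and $c$ inside a common cyclic subgroup $\langle u\rangle \leq F_2$.

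The only remaining step, and the main (mild) obstacle, is to verify that $c = [x,y]$ is not a proper power in $F_2$: once this is in hand, $u$ must be a generator of $\langle c\rangle$, forcing $w \in \langle c\rangle$. This is standard and admits several short arguments; for instance, a cyclically reduced word of length four is a proper power only if it is the square of a cyclically reduced word of length two, and a quick inspection shows that no such square produces $xyx^{-1}y^{-1}$. Geometrically, $c$ represents a boundary component of the once-punctured torus whose fundamental group is $F_2$, and boundary curves of surfaces are always primitive.
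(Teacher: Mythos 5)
Your argument is correct and is essentially identical to the paper's proof: both verify $f(c)=c$ directly, use $f^6(w)=cwc^{-1}$ to deduce that a fixed $w$ commutes with $c$, and conclude via the cyclicity of centralizers in $F_2$ together with the primitivity of $c$. Your extra justification that $c$ is not a proper power fills in a detail the paper states without proof.
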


\begin{proof}
  One can easily check that $f(c)=c$, hence $f$ fixes every element of $\langle c\rangle$. Conversely, if $w$ is fixed by $f$, then it is also fixed by $f^6$. Hence $w$ must commute with $c$. Since $c$ has cyclic centralizer and is not an $n$th power in $F_2$ for any $n\geq2$, it follows that $w\in\langle c\rangle$.
 \end{proof}

If $w\in F_2$, then let $\ell(w)$ be the length of $w$ as a reduced word in the alphabet $\{x,x^{-1},y,y^{-1}\}$.

\begin{definition}[\textbf{Start and end}]
  Suppose that $w=uv$ where $u,v\in F_2$ and there is no cancellation in the product $uv$, i.e., $\ell(w)=\ell(u)+\ell(v)$. Then we say that $w$ \emph{starts with} $u$ and \emph{ends with} $v$.

  We write $w=u\gap$ to denote that $w$ starts with $u$, or $w=\gap v$ to denote that $w$ ends with $v$, or $w=u\gap v$ to denote that $w$ starts with $u$ and ends with $v$.
\end{definition}

For example, we may write $c=xy\gap$, or $c=\gap y^{-1}$. It is also true that $c=xyx^{-1}\gap x^{-1}y^{-1}$, since in the expression $w=u\gap v$, the subwords $u$ and $v$ can overlap in the expression of $w$ as a reduced word.

The following lemma gives a precise relationship between the start and end of $w$ and $f(w)$.

\begin{lemma}
  \label{lem:start-end}
  Let $n\in\Z$. Then
   \begin{alignat*}{2}
    w &= y^nx\gap &\iff f(w) &= x^{-n+1}y\gap \\
    w &= y^nx^{-1}\gap & \iff f(w) &= x^{-n}y^{-1}\gap \\
    w &= \gap xy^n & \iff f(w) &= \gap yx^{-n} \\
    w &= \gap x^{-1}y^n & \iff f(w) &= \gap y^{-1}x^{-n-1}.
  \end{alignat*}
\end{lemma}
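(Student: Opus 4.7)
The plan is to prove the first two equivalences, which concern initial segments, by direct case analysis, and then to deduce the last two equivalences, which concern terminal segments, via the antiautomorphism $w \mapsto w^{-1}$ together with the identity $f(w^{-1}) = f(w)^{-1}$.

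For the forward direction of the first equivalence, I would write $w = y^n x u$ with $u$ either empty or beginning with a letter other than $x^{-1}$ (so that $y^n x u$ is already reduced), and compute
\[
f(w) = f(y)^n f(x) f(u) = x^{-n} \cdot xy \cdot f(u) = x^{-n+1} y f(u).
\]
It remains to check that $f(u)$ does not begin with $y^{-1}$; inspecting the possible first letters of $u$, namely $x$, $y$, $y^{-1}$, one sees that $f(u)$ begins respectively with $x$, $x^{-1}$, $x$, so no cancellation absorbs the $y$ from $x^{-n+1} y f(u)$.

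The forward direction of the second equivalence is analogous and gives $f(y^n x^{-1} u) = x^{-n} y^{-1} x^{-1} f(u)$. Here there is one genuinely cancellative case: when $u$ begins with $y^{-1}$, we may write $u = y^{-1} u'$ with $u'$ empty or not beginning with $y$, so that $f(u) = x f(u')$; the leading $x$ then cancels against the preceding $x^{-1}$, yielding $f(w) = x^{-n} y^{-1} f(u')$. A brief inspection of the four possibilities for the first letter of $u'$ shows that $f(u')$ is empty or begins with $x^{\pm 1}$, so no further cancellation reaches the $y^{-1}$. This cancellation case is the main technical point to verify carefully.

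For the backward directions, the key observation is that every nonempty reduced word in $F_2$ is of exactly one of the following types: begins with $y^m x \gap$ for some $m \in \Z$, begins with $y^m x^{-1} \gap$ for some $m \in \Z$, or is a pure power of $y$. Since $f$ is bijective, and since the forward directions sort these three families into images that begin respectively with $x^{-m+1} y$, with $x^{-m} y^{-1}$, or with a pure $x$-power (as $f(y^m) = x^{-m}$), the initial segment of $f(w)$ uniquely determines which family $w$ belongs to and pins down the value of $m$. This yields both converses at once. Finally, the third and fourth equivalences follow by inversion: the condition $w = \gap v$ is equivalent to $w^{-1} = v^{-1} \gap$, and $f$ being a homomorphism gives $f(w^{-1}) = f(w)^{-1}$. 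Applying the second (resp. first) equivalence to $w^{-1}$ in the form $y^{-n} x^{-1} \gap$ (resp. $y^{-n} x \gap$) then yields the fourth (resp. third) equivalence.
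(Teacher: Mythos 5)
Your overall architecture is reasonable --- in particular, deducing the last two equivalences from the first two via $w\mapsto w^{-1}$, and the trichotomy argument for the converses, are both correct and give a cleaner route than the paper's simultaneous treatment of all four statements. But the forward directions, which carry all the content, have a genuine gap at their central step. You assert that ``inspecting the possible first letters of $u$ \ldots one sees that $f(u)$ begins respectively with $x$, $x^{-1}$, $x$.'' The first letter of the \emph{reduced form} of $f(u)$ is not determined by the first letter of $u$: if $u=x u''$, then $f(u)=xy\,f(u'')$, and to know this still begins with $x$ you must already know that $f(u'')$ does not begin with $y^{-1}x^{-1}$ --- which is the statement being proved, one letter deeper. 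So the step is circular unless it is organized as an induction on word length (or on the number of syllables, as in the paper's proof), and you never set up such an induction. Moreover, two of your asserted first letters are actually false: if $u=yx\gap$ then $f(u)=y\gap$, not $x^{-1}\gap$; and in the cancellative case of the second equivalence, if $u'=x^{-1}\gap$ then $f(u')=y^{-1}\gap$, not $x^{\pm1}\gap$. In both instances the conclusion you need (no cancellation reaching the critical $y^{\pm1}$) happens to survive, because beginning with a $y$-letter of the harmless sign is also fine, but your argument as written does not see this.

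The repair is essentially what the paper does: write $w=y^{n_0}x^{m_1}y^{n_1}\cdots x^{m_r}y^{n_r}$ and induct on $r$ (equivalently, strong induction on $\ell(w)$), proving the start and end claims simultaneously so that the inductive hypothesis supplies the first letter of $f(u)$ and $f(u')$; the paper's inductive step then consists of verifying that in the product $f(w_1)f(w_2)$ at most one letter cancels from each factor. If you recast your case analysis in that inductive frame and correct the two misidentified first letters, your forward directions go through, and your converse-by-trichotomy and inversion steps can remain exactly as they are.
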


\begin{proof}
  Write
  \[
    w=y^{n_0}x^{m_1}y^{n_1}\cdots x^{m_r}y^{n_r},
  \]
  where $r\geq1$ and the exponents all nonzero except possibly $n_0$ and $n_r$. We will prove all four equivalences simultaneously by induction on $r$. We have
  \[
    f(y^{n_0}x^{m_1}y^{n_1}) = x^{-n_0}(xy)^{m_1}x^{-n_1} = \begin{cases} x^{-n_0+1}y\gap yx^{-n_1} & m_1>0 \\ x^{-n_0}y^{-1}\gap y^{-1}x^{-n_1-1} & m_1<0 \end{cases}
  \]
  which proves the lemma for $r=1$. Suppose now that $r>1$, and let
  \[
    w_1 \coloneqq y^{n_0}x^{m_1}y^{n_1}\cdots x^{m_{r-1}}y^{n_{r-1}}, \qquad w_2 \coloneqq x^{m_r}y^{n_r},
  \]
  so that $w=w_1w_2$. By the inductive hypothesis
  \[
    f(w_1) = \begin{cases} \gap yx^{-n_{r-1}} & m_{r-1} > 0 \\ \gap y^{-1}x^{-n_{r-1}-1} & m_{r-1} < 0, \end{cases}
  \]
  and
  \[
    f(w_2) = \begin{cases} xy\gap & m_r > 0 \\ y^{-1}\gap & m_r<0 \end{cases}
  \]
  We claim that if there is cancellation in the product $f(w_1)f(w_2)$, then only one letter from each factor cancels: an $x^{-1}$ at the end of $f(w_1)$, and an $x$ at the start of $f(w_2)$.

Note that $n_{r-1}\neq0$ since $r>1$. If $f(w_1)$ ends with a non-trivial power of $y$, then it must end with $y^{-1}$. However $f(w_2)$ does not start with $y$. Hence, if there is cancellation, then $f(w_1)$ must end with a non-trivial power of $x$. If $f(w_2)$ starts with a non-trivial power of $x$, then $m_r>0$, in which case $f(w_2)=xy\gap$. If there is cancellation, then $f(w_1)$ must end with $x^{-1}$. To cancel two letters from each factor would require $f(w_1)$ to end with $y^{-1}x^{-1}$, which is not possible since $n_{r-1}\neq0$.
  
In particular, no powers of $y$ cancel in the product $f(w_1)f(w_2)$. Hence, if we apply the inductive hypothesis to the start of $w_1$ and the end of $w_2$, then we find that all four equivalences also hold for $w$.
\end{proof}

\subsection{Proof that $k=0$}

Suppose that $w\notin\langle c\rangle$. Let $L(w),R(w)$ be maximal in absolute value such that $w=c^{L(w)}\gap c^{-R(w)}$, and let $m(w)\in F_2$ be the unique word satisfying
\[
  w=c^{L(w)}m(w)c^{-R(w)}.
\]
\begin{lemma}
Suppose that $w\notin\langle c\rangle$. Then there is no cancellation in the product $c^{L(w)}m(w)c^{-R(w)}$, i.e., $\ell(w)=\ell(c^{L(w)})+\ell(m(w))+\ell(c^{-R(w)})$.
\end{lemma}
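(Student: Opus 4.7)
The plan is to reduce the lemma to an overlap analysis of reduced words. The hypotheses $w = c^{L(w)}\gap$ and $w = \gap c^{-R(w)}$ already encode that there is no cancellation between $c^{L(w)}$ and what follows it in $w$, nor between what precedes $c^{-R(w)}$ and $c^{-R(w)}$ itself. The remaining content of the lemma is that the prefix $c^{L(w)}$ and the suffix $c^{-R(w)}$ of $w$ do not \emph{overlap} as subwords and that the middle block is non-empty, so that $\ell(w) > 4L(w) + 4R(w)$ (or, if one of $L(w), R(w)$ vanishes, the analogous trivial statement).

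The starting observation I would record is that $c = xyx^{-1}y^{-1}$ is cyclically reduced, so for every $N \geq 1$ the word $c^N$ is the literal $N$-fold concatenation of $xyx^{-1}y^{-1}$, a reduced word of length $4N$ beginning with $x$ and ending in $y^{-1}$; dually, $c^{-N} = (yxy^{-1}x^{-1})^N$ is reduced of length $4N$, beginning with $y$ and ending in $x^{-1}$. The cases in which $L(w) = 0$ or $R(w) = 0$ then follow immediately from the definitions of $L(w)$ and $R(w)$ and the non-cancellation built into the $u\gap v$ notation.

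So I would assume $L := L(w) \geq 1$, $R := R(w) \geq 1$, and suppose for contradiction that the prefix and suffix overlap by $k \geq 1$ letters in $w$. Then the $i$-th letter of the overlap region must equal both the $i$-th letter of $c^{-R}$ read from the left and the $(k - i + 1)$-th letter of $c^L$ read from the right. Reading $c^L$ right-to-left yields the period-four pattern $y^{-1}, x^{-1}, y, x$. For $i = 1$ this letter must be $y$ (the initial letter of $c^{-R}$), forcing $k \equiv 3 \pmod 4$ and in particular $k \geq 3$. For $i = 2$ it must then be $x$ (the second letter of $c^{-R}$), but $k - 1 \equiv 2 \pmod 4$ places us at $x^{-1}$ in the period-four pattern, a contradiction. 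This rules out every $k \geq 1$ and gives $\ell(w) \geq 4L + 4R$.

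Finally, I would dispatch the boundary case $\ell(w) = 4L + 4R$, equivalently $m(w) = e$: then the last letter of the prefix $c^L$ and the first letter of the suffix $c^{-R}$ sit in adjacent positions of $w$, but these are $y^{-1}$ and $y$, contradicting that $w$ is a reduced word. Hence $\ell(w) > 4L + 4R$, $m(w)$ is non-empty, and each of the two seams in $c^L m(w) c^{-R}$ inherits reducedness from $w$, completing the proof. The only real obstacle is the letter-by-letter overlap check in the third paragraph; everything else follows mechanically from the cyclic reducedness of the commutator $c$.
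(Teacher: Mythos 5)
Your overlap strategy is essentially the paper's, and the letter-by-letter periodicity check you carry out is correct as far as it goes, but there is a genuine gap: you assume $L(w)\geq 1$ and $R(w)\geq 1$, whereas $L(w)$ and $R(w)$ are defined to be maximal \emph{in absolute value} and can be negative (the paper relies on this, e.g.\ in \Cref{lem:non-decreasing}, where $L(f(w))<0$ is a live case). So the prefix of $w$ may be a power of $c^{-1}$ and/or the suffix a power of $c$, and you treat only one of the four sign combinations. This is not merely an omitted symmetric case. Take $L>0$ and $R<0$, so that both blocks are positive powers of $c=xyx^{-1}y^{-1}$: your check at $i=1$ forces $k\equiv 0\pmod 4$, and then every subsequent letter of the overlap \emph{matches}, so no contradiction arises from the letters alone. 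In that case the overlap is a genuine power of $c$, $w=c^{L}c^{|R|-k/4}$, and the contradiction is that either $w\in\langle c\rangle$ or $L$ was not maximal --- i.e.\ one must invoke the hypothesis $w\notin\langle c\rangle$, which your argument never uses. The same issue occurs for $L<0$, $R>0$. Your dispatch of the boundary case $m(w)=e$ has the same defect: when the two blocks carry the same sign the seam letters are not inverse to each other, and the contradiction is again $w\in\langle c\rangle$ rather than non-reducedness.

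The paper avoids this case analysis by phrasing the overlap condition sign-uniformly: an overlap produces a word $u$ with $0<\ell(u)<\ell(c)$ that is a suffix of $c^{\epsilon}$ and a prefix of $c^{\epsilon'}$, where $\epsilon,\epsilon'\in\{\pm1\}$ record the signs of $L(w)$ and $-R(w)$; then $\epsilon=-\epsilon'$ forces $u=u^{-1}$, and $\epsilon=\epsilon'$ forces $c^{\epsilon}=u\gap u$, both impossible (with the residual case of an overlap of length divisible by $4$ killed by $w\notin\langle c\rangle$). To repair your write-up, either run the periodicity check in all four sign cases, explicitly invoking $w\notin\langle c\rangle$ in the two where the letters match, or adopt the sign-uniform formulation.
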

\begin{proof}
This is equivalent to showing that the $c^{L(w)}$ at the start of $w$ does not overlap with the $c^{-R(w)}$ at the end of $w$.

Suppose for a contradiction that they do overlap. Then there exists $u\in F_2$ with $0<\ell(u)<\ell(c)$ such that $c^\epsilon=\gap u$ and $c^{\epsilon'}=u\gap$, where $\epsilon,\epsilon'\in\{\pm1\}$ depend on the signs of $L(w)$ and $R(w)$. If $\epsilon=-\epsilon'$, then $u=u^{-1}$ which is impossible. If $\epsilon=\epsilon'$, then $c^\epsilon=u\gap u$, however the only non-trivial word $u$ satisfying this is $u=c^\epsilon$. Hence there is no overlap.
\end{proof}

\begin{lemma}
  \label{lem:free-group-product}
  If $w_1,w_2,w_1w_2\notin\langle c\rangle$, then either $L(w_1w_2)=L(w_1)$, or $R(w_1w_2)=R(w_2)$.
\end{lemma}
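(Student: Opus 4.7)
My plan is to prove the contrapositive: assuming both $L(w_1w_2)\neq L_1$ and $R(w_1w_2)\neq R_2$, I would deduce that $w_1w_2\in\langle c\rangle$, contradicting the hypothesis. The structural reason is that $L(w_1w_2)$ is controlled by the beginning of $w_1$ (and possible extensions from $w_2$ after cancellation), with the symmetric statement for $R(w_1w_2)$; altering both simultaneously forces the cancellation between $w_1$ and $w_2$ to consume the entire middle portions $m_1c^{-R_1}$ of $w_1$ and $c^{L_2}m_2$ of $w_2$.

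Concretely, I would write $w_1=u\tau$, $w_2=\tau^{-1}v$ with $uv$ reduced and $|\tau|=k$, so that $w_1w_2=uv$. If $|u|>4|L_1|$, the letter of $uv$ at position $4|L_1|+1$ is the first letter of $m_1$, which cannot extend the initial $c^{L_1}$-pattern by maximality of $L_1$; hence $L(w_1w_2)=L_1$. So $L(w_1w_2)\neq L_1$ forces $|u|\leq 4|L_1|$, and symmetrically $R(w_1w_2)\neq R_2$ forces $|v|\leq 4|R_2|$. This gives $\tau=\mu m_1c^{-R_1}$ with $\mu$ a suffix of $c^{L_1}$, and $\tau^{-1}=c^{L_2}m_2\nu$ with $\nu$ a prefix of $c^{-R_2}$. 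The identity $(\mu m_1c^{-R_1})^{-1}=c^{L_2}m_2\nu$ becomes the reduced-word equation
\[
c^{R_1}m_1^{-1}\mu^{-1}=c^{L_2}m_2\nu.
\]
Comparing initial letters and using that $m_1^{-1}$ and $m_2$ do not begin with $c^{\pm 1}$, first shows that $R_1$ and $L_2$ have the same sign; then a short case check on $|R_1|$ versus $|L_2|$ forces $R_1=L_2$, since otherwise the longer initial power of $c$ on one side would demand a full $c^{\pm 1}$-pattern inside $m_i^{\pm 1}$, which is forbidden.

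With $R_1=L_2$ the equation reduces to $m_1^{-1}\mu^{-1}=m_2\nu$, and I would split into three subcases according to whether $\ell(m_1)$ equals, is less than, or is greater than $\ell(m_2)$. In the equal case, matching segments gives $m_2=m_1^{-1}$ and $\mu^{-1}=\nu$, and substituting into $w_1w_2=c^{L_1}m_1c^{-R_1}c^{L_2}m_2c^{-R_2}$ collapses it to $c^{L_1-R_2}\in\langle c\rangle$, the desired contradiction. If $\ell(m_1)<\ell(m_2)$, write $m_2=m_1^{-1}m'$ with $m'$ a nonempty suffix of $m_2$; then $w_1w_2=c^{L_1}m'c^{-R_2}$ as group elements. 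Since $m'$ inherits from $m_2$ the property that its last four letters do not form $c^{\pm 1}$ and, crucially, cannot equal a pure power $c^{-j}$ with $j\geq 1$, a finite check on the reduction of $c^{L_1}m'$, organized by the residue of its length modulo $4$, shows that the maximal terminal $c^{-1}$-power of the fully reduced $c^{L_1}m'c^{-R_2}$ is still $c^{-R_2}$, i.e.\ $R(w_1w_2)=R_2$, contradicting the standing assumption. The case $\ell(m_1)>\ell(m_2)$ is symmetric and yields $L(w_1w_2)=L_1$. The main technical obstacle is precisely this finite check: one has to verify that the tail of the reduced form of $c^{L_1}m'$ does not combine with $c^{-R_2}$ to enlarge or alter the terminal $c^{-1}$-power, and the restriction that $m'$ cannot be a pure power of $c^{-1}$ rules out exactly the one residue class modulo $4$ where this would occur.
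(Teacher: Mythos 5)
Your overall strategy (contrapositive, tracking how the cancellation in $w_1w_2$ can reach the powers of $c$ at either end) is the same as the paper's, but your very first reduction is false, and the rest of the argument leans on it. You claim that $|u|>4|L_1|$ forces $L(w_1w_2)=L_1$ because the letter in position $4|L_1|+1$ is the first letter of $m_1$ and ``cannot extend the initial $c^{L_1}$-pattern by maximality of $L_1$.'' Maximality of $L_1$ only says that $m_1$ does not \emph{begin with} $c^{\pm1}$; it does not say that the first letter of $m_1$ differs from the first letter of $c^{\pm1}$, and the remaining letters needed to complete a new copy of $c^{\pm1}$ may be supplied by $v$. Concretely, take $w_1=x$ and $w_2=yx^{-1}y^{-1}x$: here $L_1=0$, there is no cancellation, so $u=w_1$ and $|u|=1>0=4|L_1|$, yet $w_1w_2=xyx^{-1}y^{-1}x=cx$ has $L(w_1w_2)=1\neq L_1$. (The lemma itself survives because $R(w_1w_2)=R_2=0$ here, but your intermediate claim does not.)

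The correct consequence of $L(w_1w_2)\neq L_1$ is only that \emph{at most three} letters of $m_1$ survive into $u$ beyond $c^{L_1}$, i.e.\ $|u|\leq 4|L_1|+3$ (if four or more letters of $w_1$ past $c^{L_1}$ survive, those four letters sit in reduced position in $w_1w_2$ and cannot form $c^{\pm1}$, by maximality of $L_1$ and reducedness). Symmetrically for $R$. This weaker bound invalidates your decomposition $\tau=\mu m_1c^{-R_1}$ with $\mu$ a suffix of $c^{L_1}$, and hence the reduced-word equation $c^{R_1}m_1^{-1}\mu^{-1}=c^{L_2}m_2\nu$ on which all your subsequent case analysis rests: up to three stray letters of each $m_i$ must be carried along. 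The paper's proof works precisely with these corrected bounds: if both ends were altered, at most $3+3=6$ letters of $m_1m_2$ would remain between the two outer powers of $c$, and since $\ell(c)=4$ these can contribute at most one copy of $c^{\pm1}$, so at most one of the two outer powers can change. Your argument needs to be rebuilt on that quantitative footing.
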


\begin{proof}
  Write $L_i=L(w_i)$, $R_i=R(w_i)$, $m_i=m(w_i)$, $L_{12}=L(w_1w_2)$, and $R_{12}=R(w_1w_2)$. Then
  \[
    w_1w_2 = c^{L_1}m_1c^{L_2-R_1}m_2c^{-R_2}.
  \]
The only way for both $L_{12}\neq L_1$ and $R_{12}\neq R_2$ to hold is for the middle factor $c^{L_2-R_1}$ to be trivial, and for only three letters of $m_1$ and three letters of $m_2$ to remain in $m_1m_2$ after cancellation. But since $\ell(c)=4$, this leaves only enough letters to form at most $c^{\pm1}$, so at most one power of $c$ from one end can be altered. Hence, either $L_{12}=L_1$ or $R_{12}=R_2$.
\end{proof}

\begin{lemma}
  \label{lem:non-decreasing}
  If $w\notin\langle c\rangle$ then $L(w)\leq L(f(w))$ and $R(w)\leq R(f(w))$.
\end{lemma}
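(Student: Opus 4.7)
The plan is to decompose $w = c^L m c^{-R}$ as the reduced product encoding the canonical data, with $L = L(w)$, $R = R(w)$, $m = m(w)$ (so in particular $L(m) = R(m) = 0$ and $m \neq 1$, since otherwise $w \in \langle c\rangle$). Because $f$ fixes $c$, we have $f(w) = c^L f(m) c^{-R}$ as an element of $F_2$, and the task reduces to showing that the leading $c^L$ survives in the reduction of this product, and symmetrically for the trailing $c^{-R}$.

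The heart of the argument is the base case: if $L(m) = 0$ (and $m \notin \langle c\rangle$), then $L(f(m)) \geq 0$, i.e., $f(m)$ does not begin with $c^{-1} = yxy^{-1}x^{-1}$. I would prove this by contradiction, applying \Cref{lem:start-end} iteratively. The only way $f(m)$ begins with $y$ is for $m$ to begin with $yx$; writing $m = yxu$ (with $u$ not starting with $x^{-1}$, by reducedness), a further pass through the start equivalences shows that the only way $f(u)$ begins with $xy^{-1}$ is for $u$ to begin with $y^{-1}x^{-1}$. But then $m$ literally starts with $yxy^{-1}x^{-1} = c^{-1}$ (the junction $x \cdot y^{-1}$ is reduced), so $L(m) \leq -1$, contradicting $L(m) = 0$.

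With the base case established, the three cases for $L = L(w)$ can be handled as follows. If $L > 0$, reducedness of $c^L m$ forces $m$ not to begin with $y$, and \Cref{lem:start-end} then shows $f(m)$ also does not begin with $y$, so $c^L f(m)$ is reduced and $L(f(w)) \geq L$. If $L = 0$, the start of $f(w)$ equals the start of $f(m)$, so the base case directly gives $L(f(w)) \geq 0$. If $L = -K$ for some $K \geq 1$, consider $w' \coloneqq c^K w = m c^{-R}$ (the $c^K c^{-K}$ cancels cleanly); then $L(w') = 0$, so the base case gives $L(f(w')) \geq 0$, i.e., $c^K f(w)$ does not begin with $c^{-1}$. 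If instead $L(f(w)) \leq -(K+1)$, write $f(w) = c^{-(K+1)} v$ reduced: then $v$ does not begin with $x$ (by reducedness), so $c^K f(w) = c^{-1} v$ is reduced and literally begins with $c^{-1}$, a contradiction.

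The inequality $R(w) \leq R(f(w))$ is proved by the entirely symmetric argument, using the end equivalences of \Cref{lem:start-end} in place of the start ones. The most delicate case is $L < 0$, where cancellation at the junction between $c^L$ and $f(m)$ can genuinely alter the leading $c$-power; multiplying on the left by $c^K$ reduces this cleanly to the base case without any detailed cancellation analysis.
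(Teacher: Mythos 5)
Your overall strategy rests on the same mechanism as the paper's proof (using \Cref{lem:start-end} to control how a leading $c^{\pm1}$ transforms under $f$): your base case is precisely the key computation the paper uses for its ``$L(f(w))<0$'' case, and your treatment of $L<0$ via multiplication by $c^K$ is correct and arguably cleaner than the paper's, since it handles several powers of $c^{-1}$ at once without iteration. However, there is a genuine gap in the case $L>0$. You verify that the junction between $c^L$ and $f(m)$ is reduced and immediately conclude $L(f(w))\geq L$, but $f(w)=c^Lf(m)c^{-R}$, and nothing is said about the junction between $f(m)$ and $c^{-R}$. If the cancellation there consumed all of $f(m)$ and exposed a letter $y$ of $c^{-R}$, it would then eat into the trailing $y^{-1}$ of $c^L$ and destroy the leading power. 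Ruling this out is not free: it uses $m\notin\langle c\rangle$, $R(m)=0$, and the reducedness of $c^Lmc^{-R}$ in an essential way. For instance $f(m)=x^{-1}$ (i.e.\ $m=y$) with $R<0$ would produce exactly this failure, and is excluded only because $c^Ly$ is not reduced. You are alert to precisely this kind of junction cancellation in your $L<0$ case, but the symmetric danger on the right is overlooked here.

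The repair is short: run your first-letter argument on the single reduced word $mc^{-R}$ rather than on $m$. Since $w=c^L\cdot(mc^{-R})$ is reduced and $L>0$, the reduced word $mc^{-R}$ does not begin with $y$; \Cref{lem:start-end} then shows that its image $f(mc^{-R})=f(m)c^{-R}$ begins with $x$ or $y^{-1}$, so the only junction left to check is the one with $c^L$, which is reduced, and $f(w)$ begins with $c^L$. (The paper sidesteps the issue entirely by never passing through the factorization $c^Lmc^{-R}$: it reads off the start of $f(w)$ directly from the start of $w$, and splits into cases according to the signs of $L(w)$ and $L(f(w))$ rather than of $L(w)$ alone.) With that repair, and with the base case applied to $w$ itself when $L=0$ rather than transferred from $f(m)$ to $f(w)$ across the same unexamined junction, your argument is correct.
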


\begin{proof}
  It suffices to prove the first inequality for all $w\notin\langle c\rangle$, since $R(w)=L(w^{-1})$.

  If $L(w)>0$, then
  \[
    w = c^{L(w)-1}xyx^{-1}\gap,
  \]
  which by \Cref{lem:f-fixed,lem:start-end} implies that $f(w)=c^{L(w)}\gap$, so $L(w)\leq L(f(w))$.

  Similarly, if $L(f(w))<0$, then
  \[
    f(w) = c^{L(f(w))+1}yxy^{-1}\gap,
  \]
  which by \Cref{lem:f-fixed,lem:start-end} implies that $w=c^{L(f(w))}\gap$, so $L(w)\leq L(f(w))$.

  The only other possibility is $L(w)\leq0\leq L(f(w))$, and the inequality clearly holds here.
\end{proof}

\begin{lemma}
  \label{lem:k-is-0}
  If there is a non-identity word $w$ satisfying $w = f^{6k+1}(w)f^{-6k-1}(w)$, then $k=0$.
\end{lemma}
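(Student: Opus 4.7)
I intend to argue by contradiction, assuming $w\neq e$ satisfies the equation with some $k\neq 0$. I will focus on the case $k\geq 1$; the case $k\leq -1$ is handled by the symmetric argument, using Case B (involving $R$ and the positive power $-6k-1\geq 5$) in place of Case A below, with the mild caveat that $k=-1$ requires iterating \Cref{lem:free-group-product} once more along the Fibonacci-like recurrence to extend the range of constancy so that $j=6$ lies within it.

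A quick first step dispenses with $w\in\langle c\rangle$: by \Cref{lem:f-fixed}, $f(w)=w$, so the equation collapses to $w=w^2$, forcing $w=e$. Hence $w$, $w_1\coloneqq f^{6k+1}(w)$, and $w_2\coloneqq f^{-6k-1}(w)$ all lie outside $\langle c\rangle$. I now apply \Cref{lem:free-group-product} to the decomposition $w=w_1w_2$, obtaining the dichotomy $L(w)=L(w_1)$ (Case A) or $R(w)=R(w_2)$ (Case B). In Case A, iterating \Cref{lem:non-decreasing} across the $6k+1$ applications of $f$ collapses the non-decreasing chain $L(w)\leq L(f(w))\leq\cdots\leq L(f^{6k+1}(w))=L(w)$ to equalities throughout, so $L(f^j(w))=L(w)$ for all $0\leq j\leq 6k+1$. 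Since $k\geq 1$, we have $j=6$ in this range, and $f^6(w)=cwc^{-1}$ yields $L(cwc^{-1})=L(w)$.

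The contradiction then splits into cases on the sign of $L(w)$, using the normal form $w=c^{L(w)}m(w)c^{-R(w)}$. If $L(w)\geq 1$, the expression $cwc^{-1}=c^{L(w)+1}m(w)c^{-R(w)-1}$ is already reduced (the join $c^{L(w)+1}\mid m(w)$ has no cancellation since $m(w)$ does not start with $y$), giving $L(cwc^{-1})=L(w)+1$, a contradiction. The case $L(w)\leq -1$ is entirely parallel, since by the maximality of $|L(w)|$ the middle $m(w)$ does not start with $c^{\pm 1}$. The delicate case is $L(w)=0$: here the hypothesis $L(cwc^{-1})=0$ forces $m(w)$, and hence $w$, to start with the letter $y$, since otherwise $cw$ begins with an undisturbed $c$, giving $L(cwc^{-1})\geq 1$.

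To close this last subcase, I rewrite the original equation as
\[
  w \;=\; c^{k}\,f(w)\,c^{-2k}\,f^{-1}(w)\,c^{k}
\]
and show that its right-hand side begins with the letter $x$, contradicting $w$ starting with $y$. The point is that the leading $c^k$ starts with $x$, and cancellation at the junction $c^k\mid f(w)$ only consumes letters from the right end of $c^k$; wiping out the leading $x$ would require all of $c^k$ to be consumed, which would force $f(w)$ to begin with $c^{-k}$ and hence $L(f(w))\leq -k<0$, contradicting $L(f(w))=L(w)=0$. Case B is handled by a symmetric argument: $R(c^{-1}wc)=R(w)=0$ forces $w$ to end in $x^{-1}$, while the trailing $c^k$ on the right-hand side forces the product to end in $y^{-1}$ (the only way to avoid this would be for $f^{-1}(w)$ to end with $c^{-k}$, contradicting $R(f^{-1}(w))\leq R(w)=0$). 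The main obstacle is precisely this $L(w)=0$ (and dually $R(w)=0$) subcase, where the clean ``$L$ shifts by $+1$'' comparison breaks down due to cancellation at the $c\mid m(w)$ boundary, and one must instead track a single surviving letter of the outer $c^k$ factors, using $L(f(w))=0$ to rule out total absorption.
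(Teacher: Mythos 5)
Your reduction to the case $L(w)=0$ is sound: the appeal to \Cref{lem:free-group-product}, the collapse of the monotone chain from \Cref{lem:non-decreasing}, and the computation $L(cwc^{-1})=L(w)+1$ when $L(w)\neq0$ all work (and the $w\in\langle c\rangle$ step is fine). But the resolution of the subcase $L(w)=0$ is wrong, and the error is not cosmetic. You claim the right-hand side of $w=c^{k}f(w)c^{-2k}f^{-1}(w)c^{k}$ begins with $x$ because cancellation "at the junction $c^k\mid f(w)$" can only consume all of $c^k$ if $f(w)$ begins with $c^{-k}$. This treats reduction in a product of several factors as if it were confined to the first junction. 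What must not begin with $c^{-k}$ is the reduced form of the \emph{entire} tail $Q\coloneqq f(w)c^{-2k}f^{-1}(w)c^{k}$, not of $f(w)$ alone; $f(w)$ can be absorbed by the adjacent $c^{-2k}$, after which cancellation propagates back into the leading $c^k$. In fact this is exactly what happens: since $c^kQ=w$ by hypothesis, $Q=c^{-k}w$, and because you have already forced $w$ to begin with $y$ while $c^{-k}$ ends with $x^{-1}$, the word $c^{-k}w$ is reduced as written and \emph{does} begin with $c^{-k}$. So the full $c^k$ prefix is consumed, the right-hand side begins with $y$ (it is literally $w$), and no contradiction is obtained. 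The symmetric Case B argument fails for the same reason.

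The underlying problem is that at $n=0$ the relation $L(f^{6}(v))=L(v)+1$ need not hold when $L(v)=0$, so working with the decomposition of $w$ itself leaves you exactly in the regime where the shift-by-one comparison has no traction. The paper's proof sidesteps this by exploiting that $L_{n+6}=L_n+1$ and $R_{n-6}=R_n-1$ do hold for all sufficiently large $n$ (since $L_n,R_n\to\infty$), choosing a large $n$ adapted to the residue of $6k+1$ modulo $6$, and applying \Cref{lem:free-group-product} to the shifted identity $w_n=w_{n+6k+1}w_{n-6k-1}$; there the dichotomy immediately yields $L_n=L_n+k$ or $R_n=R_n-k$. To repair your argument you would need to replace the local cancellation claim in the $L(w)=0$ subcase by some such shift to a range where the strict increment is guaranteed.
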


\begin{proof}
  Note that the set of solutions to this equation is closed under the action of $\langle f^{6k+1}\rangle$. Let $w$ be a non-identity solution to $w=f^{6k+1}(w)f^{-6k-1}(w)$. The identity is the only solution that is fixed by $f$, so $w\notin\langle c\rangle$ by \Cref{lem:f-fixed}.

  Write $w_n=f^n(w)$, $L_n=L(w_n)$ and $R_n=R(w_n)$. Since $w_{n+6}=cw_nc^{-1}$, then for all $n$ sufficiently large $L_{n+6}=L_n+1$ and $R_{n+6}=R_n+1$. Furthermore, the sequences $L_n$ and $R_n$ are non-decreasing by \Cref{lem:non-decreasing}.
Since $6k+1$ is coprime to 6, there exists $n$ such that the following hold: 
  \begin{itemize}
  \item $L_{n+1}=L_n$
  \item $L_{n+6k+1}=L_{n+1}+k$
  \item $R_{n-1}=R_n$
  \item $R_{n-6k-1}=R_{n-1}-k$.
  \end{itemize}
  In particular $L_{n+6k+1}=L_n+k$ and $R_{n-6k-1}=R_n-k$. By \Cref{lem:free-group-product} and the equation $w_n=w_{n+6k+1}w_{n-6k-1}$ we can conclude that either $L_n=L_n+k$ or $R_n=R_n-k$. In either case we must have $k=0$.
\end{proof}

\subsection{The solutions when $k=0$}

\begin{lemma}
  \label{lem:same-start-end}
  Suppose that both $w$ and $f(w)$ start with $u$, where $\ell(u)=4$. Then $u=c^{\pm1}$. Similarly if both $w$ and $f(w)$ end with $v$, where $\ell(v)=4$, then $v=c^{\pm1}$.
\end{lemma}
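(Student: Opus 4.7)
The plan is to apply \Cref{lem:start-end} iteratively, using each of its equivalences to transfer information from the beginning of $w$ to the beginning of $f(w)$, and to pin down enough structure of $w$ to force $u = c^{\pm 1}$. Write $w$ in reduced form as $w = y^{n_1} x^{\epsilon_1} y^{n_2} x^{\epsilon_2} \gap$ with $n_1 \in \Z$, $n_2 \in \Z \setminus \{0\}$, and $\epsilon_i \in \{\pm 1\}$. Two applications of \Cref{lem:start-end} give $f(w) = x^{m_1'} y^{\epsilon_1} x^{m_2'} y^{\epsilon_2} \gap'$, where $m_i' = 1 - n_i$ if $\epsilon_i = 1$ and $m_i' = -n_i$ if $\epsilon_i = -1$.

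First I compare the first letter of $w$ (which is $y^{\pm 1}$ when $n_1 \gtrless 0$ and $x^{\epsilon_1}$ when $n_1 = 0$) with the first letter of $f(w)$ (which is $x^{\pm 1}$ when $m_1' \gtrless 0$ and $y^{\epsilon_1}$ when $m_1' = 0$). A short case analysis on $(n_1,\epsilon_1)$ eliminates all but two surviving options: $(n_1, \epsilon_1) = (0, 1)$, giving $u_1 = x$ and $f(w) = xy \gap'$, or $(n_1, \epsilon_1) = (1, 1)$, giving $u_1 = y$ and $f(w) = y \gap'$. Comparing the second and third letters in each branch then pins down the next chunk: in the first branch it forces $n_2 = 1$ and $\epsilon_2 = -1$, so $w = xyx^{-1} \gap$; in the second branch it forces $n_2 = -1$ and $\epsilon_2 = -1$, so $w = yxy^{-1}x^{-1} \gap$.

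Once the first two chunks of $w$ are fixed, the first three chunks of $f(w)$ are determined as well. In the first branch $f(w) = xy \cdot x^{-1}y^{-1} \gap''$, so its four-letter prefix is exactly $c$, and matching $u_4$ with $y^{-1}$ merely constrains $n_3 < 0$ in $w$, confirming $u = c$. In the second branch $f(w)$ begins $y x y^{-1}$; matching the fourth letter $x^{-1}$ of $w$ against the first letter of the third chunk $x^{m_3'}y^{\epsilon_3}$ of $f(w)$ forces $m_3' < 0$, which is achievable, and yields $u = yxy^{-1}x^{-1} = c^{-1}$. The statement for endings follows by the dual argument using the third and fourth equivalences of \Cref{lem:start-end}, or equivalently by applying the "starts with" case to $w^{-1}$ and using $f(w^{-1}) = f(w)^{-1}$.

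The main obstacle is bookkeeping of the borderline cases $m_i' = 0$: when this happens the factor $x^{m_i'}$ disappears and adjacent $y$-letters in $f(w)$ merge or cancel, breaking the naive letter-by-letter correspondence between chunks of $w$ and chunks of $f(w)$. I will have to track these cases explicitly at each step of the comparison, but in every instance they tighten the constraints on $(n_i, \epsilon_i)$ rather than produce new candidate prefixes.
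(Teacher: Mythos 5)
Your overall strategy---push the leading syllables of $w$ through $f$, compare the resulting prefix of $f(w)$ with that of $w$ letter by letter, and whittle the survivors down to the two branches yielding $c$ and $c^{-1}$---is the same as the paper's. However, as written there are two concrete gaps. The first is that the claimed reduced form $w=y^{n_1}x^{\epsilon_1}y^{n_2}x^{\epsilon_2}\gap$ with $\epsilon_i\in\{\pm1\}$ and $n_2\neq0$ is not a general reduced form: the $x$-syllables of a reduced word may have arbitrary length, so words such as $x^2y\gap$ or $yx^3y^{-1}\gap$ simply fall outside your case analysis. The paper writes $w=y^{n_0}x^{m_1}y^{n_1}\cdots$ with $m_1$ arbitrary and \emph{derives} $m_1=1$ from the prefix comparison; you must do the same. (The comparison does eliminate these words---e.g.\ $w=yx^m\gap$ with $m\geq2$ gives $f(w)=y(xy)^{m-1}\gap$, whose third letter $y$ cannot match the third letter $x$ of $w$---but this step is missing, and your setup hides the need for it.)

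The second issue is that your formula for $m_2'$ is incorrect whenever $\epsilon_1=-1$: since $f(x^{-1})=y^{-1}x^{-1}$, the $x$-block of $f(w)$ following $y^{\epsilon_1}$ picks up an extra factor of $x^{-1}$, so the correct exponent is one less than your formula predicts (for instance, with $\epsilon_1=\epsilon_2=-1$ one gets $m_2'=-n_2-1$, not $-n_2$). This is exactly the phenomenon recorded in the fourth equivalence of \Cref{lem:start-end}, whose right-hand side ends in $x^{-n-1}$ rather than $x^{-n}$. You are saved only because both branches surviving the first-letter comparison have $\epsilon_1=+1$, where your formula happens to be right; but a reader checking the eliminated cases against your stated formula would find it false. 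With the general $x$-exponents restored and the $\epsilon_1=-1$ correction made, the argument goes through and coincides with the paper's proof.
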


\begin{proof}
  Note that the second statement follows from applying the first to $w^{-1}$. Write
  \[
    w=y^{n_0}x^{m_1}y^{n_1}\cdots x^{m_r}y^{n_r}
  \]
  as before. Suppose that $w$ and $f(w)$ both start with the same length four word $u$. By \Cref{lem:start-end}, $w$ starts with either $yx$ or $x$. Hence $r>0$, $m_1>0$ and $n_0\in\{0,1\}$. In the arguments that follow, we will freely apply \Cref{lem:start-end} in similar ways to the previous application. For instance,
  \[
    f(y^{n_1}\cdots x^{m_r}y^{n_r}) = \begin{cases} x^{-n_1} & k=1 \\ x^{-n_1+1}y\gap & m_2 > 0 \\ x^{-n_1}y^{-1}\gap & m_2 < 0. \end{cases}
  \]
  Hence,
  \[
    f(w) = \begin{cases} x^{-n_0+1}y(xy)^{m_1-1}x^{-n_1} & r = 1 \\ x^{-n_0+1}y(xy)^{m_1-1}x^{-n_1+1}y \gap & m_2 > 0 \\ x^{-n_0+1}y(xy)^{m_1-1}x^{-n_1}y^{-1}\gap & m_2<0. \end{cases}
  \]
  If $n_0=0$, then $f(w)=xy\gap$, hence $m_1=1$.
  If $n_0=1$, then $f(w)=y(xy)^{m_1-1}\gap$, hence $m_1$ cannot be larger than 1. Thus, in all cases $m_1=1$. Hence,
  \[
    f(w) = \begin{cases} x^{-n_0+1}yx^{-n_1} & r = 1 \\ x^{-n_0+1}yx^{-n_1+1}y \gap & m_2 > 0 \\ x^{-n_0+1}yx^{-n_1}y^{-1}\gap & m_2<0. \end{cases}
  \]
  Clearly $r=1$ leads to a contradiction. If $n_0=1$, then
  \[
    f(w) = \begin{cases} yx^{-n_1+1}y\gap & m_2 > 0 \\ yx^{-n_1}y^{-1}\gap & m_2 < 0. \end{cases}
  \]
  Since $m_1=1$ (and $n_0=1$) we must have $n_1=-1$ and $m_2<0$. Thus, if $n_0=1$, then $w$ starts with $c^{-1}=yxy^{-1}x^{-1}$.

  Now suppose $n_0=0$, and assume for a contradiction that $n_1\neq1$. Then $f(w)=xyx^{e}\gap$, where $e\neq0$. Hence, by comparing with $w$ we must have $n_1=1$.

  Thus, if $n_0=0$, then $m_1=n_1=1$, and hence $m_2<0$. Note that
  \[
    f(y^{n_2}\cdots x^{n_k}y^{n_k}) = \begin{cases} x^{-n_2} & r = 2 \\ x^{-n_2+1}y\gap & m_3 > 0 \\ x^{-n_2}y^{-1}\gap & m_3 < 0. \end{cases}
  \]
  Hence, if $n_0=0$, then
  \[
    f(w) = \begin{cases} xyx^{-1}(y^{-1}x^{-1})^{-m_2-1}y^{-1}x^{-n_2-1} & r = 2 \\ xyx^{-1}(y^{-1}x^{-1})^{-m_2-1}y^{-1}x^{-n_2}y\gap & m_3 > 0 \\ xyx^{-1}(y^{-1}x^{-1})^{-m_2-1}y^{-1}x^{-n_2-1}y^{-1}\gap & m_3<0, \end{cases}
  \]
  and so $w$ starts with $c=xyx^{-1}y^{-1}$.
\end{proof}

In what follows, the \emph{$f$-orbit of $w$} is the set $\{f^n(w) : n\in\Z\}$, i.e., the union of the forwards and backwards $f$-orbits of $w$.

\begin{lemma}
  \label{lem:consecutive}
  Every word in $F_2$ not in $\langle c\rangle$ contains a word $w$ in its $f$-orbit such that $f^{-1}(w)$, $w$, and $f(w)$ all neither end with $c$ nor $c^{-1}$.
\end{lemma}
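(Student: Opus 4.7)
Given $w_0\in F_2\setminus\langle c\rangle$, the aim is to find $n\in\Z$ with $R_{n-1}=R_n=R_{n+1}=0$, where $R_n\coloneqq R(f^n(w_0))$; this is equivalent to the three words $f^{n-1}(w_0)$, $f^n(w_0)$, $f^{n+1}(w_0)$ having length-four suffix distinct from $c^{\pm 1}$, so $w\coloneqq f^n(w_0)$ gives the conclusion. By \Cref{lem:non-decreasing}, $R_n$ is non-decreasing in $n$. The identity $f^6(u)=cuc^{-1}$, together with a reduced-form analysis similar to that used in the proof of \Cref{lem:k-is-0}, yields $R_{n+6}=R_n+1$ for all sufficiently large $n$; a symmetric argument applied to $f^{-6}(u)=c^{-1}uc$ gives $R_n\to-\infty$ as $n\to-\infty$. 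In particular $R_n$ attains every integer value.

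In the stable forward regime (large $n$), the six consecutive increments $R_{n+1}-R_n,\ldots,R_{n+6}-R_{n+5}$ are non-negative integers summing to $1$, so each increment lies in $\{0,1\}$ and exactly one of every six increments is $+1$. Consequently each attained value of $R_n$ persists for six consecutive indices; the same holds in the stable backward regime. If the value $0$ lies in either stable regime, six consecutive $n$'s satisfy $R_n=0$, and any interior such $n$ is a valid choice.

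The main obstacle is the case where the stable forward regime maintains $R_n\geq 1$ throughout and the stable backward regime maintains $R_n\leq -1$ throughout, forcing $0$ to be attained only in the short transitional range between them. To rule this out, the plan is to invoke \Cref{lem:same-start-end}: a transition $R_n<0<R_{n+1}$ would require the length-four suffix of $f^n(w_0)$ to be $c$ and that of $f^{n+1}(w_0)$ to be $c^{-1}$, and a direct case analysis using the explicit suffix-evolution formulas of \Cref{lem:start-end} shows that such a one-step transition is impossible. Alternatively, one may replace $w_0$ by a conveniently chosen $f^k(w_0)$ to shift the analysis into the stable regime from the outset. The crux of the argument is executing this transitional case analysis cleanly, relying on the tight control over suffix evolution provided by \Cref{lem:start-end,lem:same-start-end}.
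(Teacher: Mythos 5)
Your reformulation is the right one: writing $R_n\coloneqq R(f^n(w_0))$, the goal is three consecutive indices with $R_n=0$, and your handling of the case where $0$ is attained in a stable regime (where $R_{n+6}=R_n+1$ forces each attained value to persist for six indices) is correct. The paper's proof is close in spirit to your transitional analysis: it locates the last $v$ in the orbit ending with $c$ (i.e.\ the last index with $R_n<0$) and then runs the suffix-evolution computation of \Cref{lem:start-end} forward from there.

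The gap is in the transitional case. Ruling out a one-step jump $R_n<0<R_{n+1}$ shows only that $0$ is attained at \emph{some} index; it does not show it is attained at three \emph{consecutive} indices, which is what the lemma demands. After the first index $n_0$ with $R_{n_0}=0$, your argument gives no control over $R_{n_0+1}$ and $R_{n_0+2}$: a priori the sequence could run $\ldots,-1,0,1,1,\ldots$ or $\ldots,-1,0,0,1,\ldots$, and nothing you have established excludes this, since the "persists for six indices" claim is only available in the stable regimes, which is exactly the case you have set aside. To close the gap you must push the suffix computation two steps further: if $v=\gap c$ and $f(v)$ does not end with $c$, then \Cref{lem:start-end} gives $f(v)=\gap x^{-1}y^{-1}$, hence $f^2(v)=\gap y^{-1}$, and one checks directly that none of $f(v),f^2(v),f^3(v)$ can end with $c^{-1}$ (e.g.\ $f^3(v)=\gap y^{-1}x^{-1}$ would force $f^2(v)=\gap x^{-1}$, contradicting $f^2(v)=\gap y^{-1}$); this is precisely the computation the paper carries out. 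A secondary point: \Cref{lem:same-start-end} is not the relevant tool for the transition, since it concerns $w$ and $f(w)$ sharing the \emph{same} length-four suffix, whereas your transition involves the distinct suffixes $c$ and $c^{-1}$; everything you need comes from \Cref{lem:start-end}. Likewise, the assertion "$R_n$ attains every integer value" does not follow merely from monotonicity and the limits $\pm\infty$, as you implicitly acknowledge later.
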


\begin{proof}
  Suppose $w\notin\langle c\rangle$. Since $f^6(w)=cwc^{-1}$, the word $f^n(w)$ ends with $c^{-1}$ for all sufficiently large $n$, and ends with $c$ for all sufficiently negative $n$.

  For a given $f$-orbit not in $\langle c\rangle$, choose $v$ in it such that $v$ ends in $c$ but $f(v)$ does not, and let $w\coloneqq f^2(v)$, so $f^{-1}(w)$ does not end in $c$. By \Cref{lem:start-end}, neither $w$ nor $f(w)$ end with $c$. Also, by \Cref{lem:start-end},
  \[
    v = \gap c \implies f(v) = f^{-1}(w) = \gap yx^{-1}y^{-1} \implies w = \gap y^{-1}.
  \]
  Suppose for a contradiction that $f(w)=\gap c^{-1}$. Then in particular $f(w)=\gap y^{-1}x^{-1}$, so by \Cref{lem:start-end}
  \[
    w = \gap x^{-1},
  \]
  which is a contradiction. Hence, none of $f^{-1}(w)$, $w$ or $f(w)$ end in $c^{-1}$.
\end{proof}

\begin{lemma}
  \label{lem:f-orbit-x}
  If $w$ is a non-identity solution of $w=f(w)f^{-1}(w)$, then $w=f^n(x)$ for some $n\in\Z$.
\end{lemma}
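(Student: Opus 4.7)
The plan is to combine Lemmas~\ref{lem:same-start-end} and~\ref{lem:consecutive} with a careful analysis of the cancellation in the equation $w = f(w) f^{-1}(w)$, and then identify $w$ directly.

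First I would normalize $w$. Since both the equation and the conclusion $w \in \{f^n(x) : n \in \Z\}$ are preserved under replacing $w$ by $f^k(w)$, I am free to pick a representative of the $f$-orbit of $w$. If $w \in \langle c \rangle$, then by Lemma~\ref{lem:f-fixed} $w$ is fixed by $f$ and the equation becomes $w = w \cdot w$, forcing $w = 1$ and contradicting the non-identity hypothesis. So $w \notin \langle c \rangle$, and Lemma~\ref{lem:consecutive} supplies a representative in the $f$-orbit of $w$ for which none of $f^{-1}(w)$, $w$, $f(w)$ ends with $c$ or with $c^{-1}$.

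Next, I would perform a cancellation analysis. Let $\ell(v)$ be the length of cancellation in the free-group product $f(w) \cdot f^{-1}(w) = w$, and set $s \coloneqq \ell(f^{-1}(w)) - \ell(v) \geq 0$. When $s \geq 1$, the last $s$ letters of $w$ coincide with the last $s$ letters of $f^{-1}(w)$. If $s \geq 4$, then $w$ and $f^{-1}(w)$ share a length-$4$ suffix, and Lemma~\ref{lem:same-start-end} applied to $f^{-1}(w)$ and $f(f^{-1}(w)) = w$ forces that suffix to equal $c^{\pm 1}$, contradicting the choice of $w$ in the first step. Hence $s \in \{0, 1, 2, 3\}$.

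With $s$ bounded, the final letters of $w$ are pinned down by the suffix of $f^{-1}(w)$, while the leading letters of $w$ come from $f(w)$ after cancellation of length $\ell(v)$. Applying Lemma~\ref{lem:start-end} converts these observations into explicit constraints on the letters of $w$, and a short case analysis on $s \in \{0, 1, 2, 3\}$ then identifies $w$ uniquely. For example, in the case $s = 0$ — where $f^{-1}(w)$ cancels entirely and $f(w) = w \cdot f^{-1}(w)^{-1}$ without further cancellation — iterating this relation and applying Lemma~\ref{lem:start-end} reconstructs $w$ letter by letter and yields $w = x$, so the representative chosen in the first step is $x$ itself, and thus the original $w$ was $f^n(x)$ for some $n$.

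The main obstacle is that the prefix of $w$ is not controlled in the way the suffix is: the words $f^n(x)$ genuinely begin with $c^{\pm 1}$ for $n \geq 3$, so one cannot hope for a symmetric bound on $\ell(f(w)) - \ell(v)$ via Lemma~\ref{lem:same-start-end}. Consequently the case analysis has to be driven primarily from the suffix side, using Lemma~\ref{lem:start-end} iteratively on $f^{\pm 1}(w)$ (which are themselves non-identity solutions, by $f$-equivariance of the equation) to propagate the suffix constraint far enough back into $w$ to pin it down completely.
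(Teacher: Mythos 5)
Your normalization and the bound $s\le3$ on the surviving suffix of $f^{-1}(w)$ are correct and agree with the paper, but there is a genuine gap exactly at the point you flag as ``the main obstacle'': nothing in your argument bounds the cancellation length $\ell(v)$, and hence nothing bounds $\ell(w)$. Knowing only that at most three letters of $f^{-1}(w)$ survive is compatible with $w$ being arbitrarily long, and \Cref{lem:start-end} only controls the first and last syllable or two of $f^{\pm1}(w)$, so iterating it to ``propagate the suffix constraint back into $w$'' can never reach the middle of the word or control its length. Likewise the $s=0$ case is asserted (``reconstructs $w$ letter by letter and yields $w=x$'') rather than proved.

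The paper closes this gap with a dichotomy your proposal is missing. If some element of the $f$-orbit neither starts nor ends with $c^{\pm1}$, one takes \emph{that} element rather than the one supplied by \Cref{lem:consecutive}; then \Cref{lem:same-start-end} bounds the surviving prefix of $f(w)$ by $3$ as well as the surviving suffix of $f^{-1}(w)$, giving $\ell(w)\le6$ outright. Otherwise every element of the orbit starts or ends with $c^{\pm1}$, so your chosen representative has $f^{-1}(w)$ \emph{starting} with $c^{\pm1}$; since $f(w)$ does not end with $c^{\mp1}$, at most three letters can cancel in the product $f(w)f^{-1}(w)$, i.e.\ $\ell(v)\le3$, whence $\ell(f^{-1}(w))=\ell(v)+s\le6$. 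Either way one lands in the regime $\ell\le6$, and you would still need the base case that every non-identity solution of length at most $6$ lies in the $f$-orbit of $x$ --- the paper disposes of this by a finite computer check, for which your proposal offers no substitute.
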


\begin{proof}
  First note the claim holds under the additional hypothesis that $\ell(w)\leq6$, which is easily verified by a computer. We can also ignore elements of $\langle c\rangle$, since the only solution contained in there is the identity.
  
  Suppose $w$ neither starts nor ends with either of $c$ or $c^{-1}$. Then by \Cref{lem:same-start-end}, in the product $w=f(w)f^{-1}(w)$ all but at most the first three letters of $f(w)$ and at most the last three letters of $f^{-1}(w)$ must be cancelled, hence $\ell(w)\leq6$, so $w$ is in the $f$-orbit of $x$.

  Assume then that all words in the $f$-orbit of $w\notin\langle c\rangle$ either start or end with one of $c$ or $c^{-1}$. By \Cref{lem:consecutive}, we may assume without loss of generality that $f^{-1}(w)$, $w$ and $f(w)$ do not end with either of $c$ or $c^{-1}$, and hence all start with $c$ or $c^{-1}$.

  In particular, by \Cref{lem:same-start-end} all but at most three letters of $f^{-1}(w)$ are cancelled in the product $f(w)f^{-1}(w)$. Hence $\ell(f^{-1}(w))\leq6$, so $w$ is in the $f$-orbit of $x$.
\end{proof}

\begin{proof}[Proof of \Cref{lem:f2-aut}]
  If $w$ is a non-identity solution of $f^{6k+1}(w)f^{-6k-1}(w)$, then $k=0$ by \Cref{lem:k-is-0}, and $w=f^n(x)$ for some $n\in\Z$ by \Cref{lem:f-orbit-x}.
\end{proof}

\section{Classification of holomorphic maps}\label{sec:conf-spaces}

The goal of this section is to classify the holomorphic maps $\Conf_n\C\to\Conf_m\C$ for $m\in\{3,4\}$, and to classify the families of elliptic curves over $\Conf_n\C$.

We denote by $\M_{g,n}$ the moduli space of Riemann surfaces of genus $g$ with $n$ unordered marked points. We only consider this moduli space when $2g-2+n>0$, where it can be identified with the orbifold quotient $\Mod_{g,n}\backslash{\mathcal T_{g,n}}$, where $\Mod_{g,n}$ and ${\mathcal T_{g,n}}$ are the associated mapping class group and Teichm{\"u}ller spaces. Note that $\Mod_{g,n}$ need not act faithfully on ${\mathcal T_{g,n}}$, e.g., $\M_{1,1}=\SL_2\Z\backslash\HH$. Adopting this convention means that holomorphic maps $B\to\M_{g,n}$, in the orbifold sense, where $B$ is a complex manifold, correspond to families of genus $g$ curves having $n$ marked points.

Following the techniques of Chen and Salter \cite{CS23}, we first constrain the possible homomorphisms that can come from holomorphic maps, and then apply a theorem of Imayoshi and Shiga \cite{IS88} to see that each possible equivalence class is realized by a unique holomorphic map, up to affine twists.

\subsection{Constraining the homomorphisms}

The following is a well-known consequence of work of Bers \cite{Ber78}, see, e.g., \cite[\S3]{McM00}.

\begin{theorem}
  \label{thm:punctured-disk-family}
  Let $f\colon\D^*\to\M_{g,n}$ be holomorphic, where $\D^*=\{z\in\C : 0 < |z| < 1\}$ is a punctured disk and $2g-2+n>0$. Then $f_*(k)$ is a multitwist for some $k\geq1$, where $f_*\colon\Z\to\Mod_{g,n}$ is the induced map on fundamental groups.
\end{theorem}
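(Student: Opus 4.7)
The plan is to lift $f$ along the universal cover of $\D^*$, use the coincidence of the Kobayashi and Teichm{\"u}ller metrics to show that the monodromy $\phi\coloneqq f_*(1)\in\Mod_{g,n}$ has zero asymptotic translation length on $\mathcal{T}_{g,n}$, and then apply Bers's characterization of such mapping classes.

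First I would express $\D^*$ as $\Z\backslash\HH$ via the universal covering $z\mapsto e^{2\pi i z}$, whose deck transformation group is generated by $z\mapsto z+1$. Since $2g-2+n>0$, Teichm{\"u}ller space $\mathcal{T}_{g,n}$ is a contractible complex manifold with $\M_{g,n}=\Mod_{g,n}\backslash\mathcal{T}_{g,n}$ as an orbifold, so $f$ lifts to a holomorphic map $\tilde{f}\colon\HH\to\mathcal{T}_{g,n}$ equivariant for the two $\Z$-actions. Concretely, $\tilde{f}(z+1)=\phi\cdot\tilde{f}(z)$ for all $z\in\HH$.

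Next I would invoke Royden's theorem, which identifies the Kobayashi metric on $\mathcal{T}_{g,n}$ with its Teichm{\"u}ller metric, while the Kobayashi metric on $\HH$ is the hyperbolic metric. Since holomorphic maps are non-increasing with respect to Kobayashi metrics,
\[
  d_{\mathcal{T}}\bigl(\tilde{f}(z),\,\phi\cdot\tilde{f}(z)\bigr) \;\le\; d_{\HH}(z,z+1),
\]
and the right-hand side tends to zero as $\operatorname{Im}(z)\to\infty$. Hence the asymptotic translation length of $\phi$ on $\mathcal{T}_{g,n}$ is zero.

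Finally I would invoke Bers's classification of mapping classes by their translation length \cite{Ber78}: a pseudo-Anosov class translates along its Teichm{\"u}ller axis by $\log\lambda>0$, where $\lambda$ is its stretch factor, and for a reducible class the decomposition along the canonical reduction system expresses the translation length of $\phi$ as the maximum of those of the first-return maps to each component. Zero translation length therefore forces every Nielsen--Thurston component of $\phi$ to be periodic, so a suitable power $\phi^k$ simultaneously trivializes the permutation of the components and the periodic action on each piece. What remains is supported in a neighborhood of the reduction curves, i.e.\ a multitwist. The main subtlety lies in this last step: translating zero translation length into the structural statement that $\phi^k$ is a multitwist rests on Bers's structural analysis in \cite{Ber78}, which is really the heart of the theorem; the metric argument in the earlier steps is, by contrast, a soft Schwarz-Pick type estimate.
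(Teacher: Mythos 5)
Your proposal is correct and follows essentially the same route as the paper's proof: both use Royden's identification of the Kobayashi and Teichm{\"u}ller metrics together with the Schwarz--Pick property to show the monodromy has zero translation length (the paper phrases this via arbitrarily short loops around the puncture rather than lifting to $\HH$, but this is the same estimate), and both then defer to Bers's structural analysis to conclude that the monodromy is periodic on each Nielsen--Thurston component and hence has a power that is a multitwist.
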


\begin{proof}
  It is a theorem of Royden \cite{Roy71} (see \cite[\S2.3]{EK74}) that $\M_{g,n}$ is Kobayashi hyperbolic, with Kobayashi metric equal to the Teichm\"{u}ller metric. Hence, the holomorphic map $f$ is distance non-increasing with respect to the hyperbolic metric on $\D^*$ and the Teichm\"{u}ller metric on $\M_{g,n}$. Since a loop around the puncture in $\D^*$ can be made arbitrarily short in the hyperbolic metric, $f_*(k_0)$ has translation length zero for all $k_0\in\Z$. Let $k_0\geq1$ be such that $f_*(k_0)$ preserves the connected components of the complement of its canonical reduction system. By assertion (A) in the proof of Bers' Theorem 7 \cite{Ber78}, we can conclude that $f_*(k_0)$ is not pseudo-Anosov when restricted to any of these connected components. Thus, $f_*(k_0)$ is periodic on each component, hence $f_*(k)$ is a multitwist for some positive multiple $k$ of $k_0$.
\end{proof}

As a useful consequence we have the following.

\begin{proposition}
  \label{prop:multitwist}
  Let $f\colon\Conf_m\C\to\M_{g,n}$ be a holomorphic map, where $2g-2+n>0$. Then there exists some $k\neq0$ such that $f_*(\sigma_i^k)$ is a multitwist for $i=1,\ldots,n-1$.
\end{proposition}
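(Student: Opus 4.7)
The plan is to apply \Cref{thm:punctured-disk-family} to a well-chosen composition. Since every standard generator $\sigma_i$ of $B_m$ is conjugate to $\sigma_1$, and since multitwists are closed under conjugation by mapping classes (conjugating $T_\gamma$ by $\varphi$ gives $T_{\varphi(\gamma)}$), it suffices to produce a single integer $k \geq 1$ such that $f_*(\sigma_1^k)$ is a multitwist; the same $k$ will then work for every $\sigma_i$.

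First I would construct a holomorphic map $g \colon \D^* \to \Conf_m\C$ whose monodromy realizes $\sigma_1$. Fix distinct points $p_3, \ldots, p_m \in \C$ with $|p_j| \geq 2$ and set
\[
  g(z) \coloneqq \{\sqrt{z},\, -\sqrt{z},\, p_3, \ldots, p_m\}.
\]
The unordered pair $\{\sqrt{z}, -\sqrt{z}\}$ is the zero set of the polynomial $w^2 - z$, whose coefficients vary holomorphically in $z$ and whose roots are simple for $z \neq 0$; so $g$ is a well-defined holomorphic map, whose image lies in $\Conf_m\C$ because $|\pm\sqrt{z}| < 1 \leq |p_j|$ for all $z \in \D^*$. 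Tracking the two branches as $z$ traverses a positively oriented generator of $\pi_1(\D^*) \cong \Z$ exhibits $g_*$ of that generator as a half-twist about $\{\sqrt{z}, -\sqrt{z}\}$ fixing the remaining points, i.e.\ as a conjugate of $\sigma_1^{\pm 1}$ in $B_m$.

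Next I would apply \Cref{thm:punctured-disk-family} to $f \circ g \colon \D^* \to \M_{g,n}$ (permissible because $2g-2+n > 0$) to obtain $k \geq 1$ such that $(f \circ g)_*(k) = f_*(\sigma_1^{\pm k})$ is a multitwist. Writing $\sigma_i = w_i \sigma_1 w_i^{-1}$ for some $w_i \in B_m$, we then have $f_*(\sigma_i^k) = f_*(w_i)\, f_*(\sigma_1^k)\, f_*(w_i)^{-1}$, which is again a multitwist, completing the argument.

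The main obstacle is essentially just the bookkeeping of the monodromy calculation — verifying that following the two branches $\pm\sqrt{z}$ around $0$ really produces a half-twist in $\Conf_2\C \subset \Conf_m\C$ rather than an element of the wrong order — but this is routine and the rest is a direct invocation of the Bers-type theorem \Cref{thm:punctured-disk-family}.
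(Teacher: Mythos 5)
Your proposal is correct and follows essentially the same route as the paper: both arguments produce a holomorphic punctured disk in $\Conf_m\C$ whose monodromy is conjugate to $\sigma_1$, apply \Cref{thm:punctured-disk-family} to its composition with $f$, and then use the fact that all $\sigma_i$ are conjugate (and that multitwists are closed under conjugation and inversion) to get a single $k$ for every generator. The only difference is cosmetic: you write down the normal disk to the discriminant locus explicitly via $z\mapsto\{\sqrt{z},-\sqrt{z},p_3,\ldots,p_m\}$, whereas the paper phrases it abstractly in terms of a small loop in the normal space at a smooth point of the discriminant.
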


\begin{proof}
  The configuration space $\Conf_m\C$ is isomorphic to the space of monic degree $m$ square-free polynomials over $\C$. This identifies $\Conf_m\C$ with $\C^m-{\mathcal D}_m$, where ${\mathcal D}_m$ is the discriminant locus. The smooth points of ${\mathcal D}_m$ correspond to configurations with exactly one repeated point. Thus, a loop in the discriminant complement representing $\sigma_i$ is freely homotopic to a small loop $\gamma$ in the normal space of ${\mathcal D}_m$ at some smooth point. In fact, we can choose the same $\gamma$ for each $\sigma_i$, since the set of smooth points of ${\mathcal D}_m$ is connected. Applying \Cref{thm:punctured-disk-family} to the punctured disk bounded by $\gamma$ gives the result.
\end{proof}

We will also need the following theorem, which is a special case of a theorem of Daskalopoulos and Wentworth.

\begin{theorem}[{\cite[Theorem 5.7]{DW07}}]
  \label{thm:DW07}
  Let $f\colon B\to\M_{g,n}$ be a holomorphic map, where $B$ is a Riemann surface of finite type. If $f_*\colon\pi_1(B)\to\Mod_{g,n}$ is cyclic or reducible, then $f$ is constant.
\end{theorem}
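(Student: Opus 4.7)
The plan is to combine the Kobayashi hyperbolicity of $\M_{g,n}$ (Royden's theorem, already invoked in the proof of \Cref{thm:punctured-disk-family}) with the Nielsen--Thurston structure of $f_*(\pi_1(B))$ to confine the image of $f$ to a locus on which no nonconstant holomorphic map from a Riemann surface of finite type can land. I would split the argument along the two hypotheses and reduce one to the other.

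First, I would handle the reducible case by reducing it to the cyclic case. Suppose $f_*(\pi_1(B))$ preserves an essential multicurve $\Gamma$ on the reference surface. After passing to a finite unbranched cover of $B$ (which does not affect the conclusion, since a Riemann surface of finite type covered by a constant map is constant), $f$ factors through the subvariety $V_\Gamma \subset \M_{g,n}$ parametrising Riemann surfaces carrying a distinguished isotopy class of multicurve. The pinching construction exhibits $V_\Gamma$ as a holomorphic fibration over the product $\prod_j \M_{g_j,n_j}$ of lower-complexity moduli spaces of the complementary pieces, with fibres tori built from the Dehn twists along the components of $\Gamma$. Induction on the complexity $3g - 3 + n$ collapses each factor $\M_{g_j, n_j}$ to a point, so the residual map lands in a torus fibre and its monodromy is abelian; a further reduction brings us to the cyclic case.

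Next, for the cyclic case $f_*(\pi_1(B)) \leq \langle g \rangle$, I would split by the Nielsen--Thurston type of $g$. If $g$ is periodic, then $f_*$ has finite image, and lifting $f$ to the finite cover of $B$ trivialising $f_*$ produces a holomorphic map into $\mathcal T_{g,n}$; since $\mathcal T_{g,n}$ is a Kobayashi-hyperbolic bounded domain and $B$ is of finite type, a Schwarz-lemma style argument at the punctures forces this lift to be constant. If $g$ is pseudo-Anosov, then $\langle g \rangle$ preserves the Teichm\"uller axis of $g$ in $\mathcal T_{g,n}$, and the image of the equivariant lift $\widetilde f \colon \widetilde B \to \mathcal T_{g,n}$ should be constrained to lie on this axis; since the axis is a real geodesic, not a holomorphic subvariety, $\widetilde f$ must be constant. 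If $g$ is a non-trivial multitwist, the image lifts to a Teichm\"uller disk of twist parameters, and \Cref{thm:punctured-disk-family} applied at each puncture of $B$ pins the holomorphic twist coordinate to a constant.

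The main obstacle is the pseudo-Anosov subcase. Showing that an equivariant holomorphic map into $\mathcal T_{g,n}$ with cyclic monodromy generated by a pseudo-Anosov actually lands on the Teichm\"uller axis (rather than merely staying in its orbit closure) is subtle, and honest control here seems to require the harmonic-map and Weil--Petersson curvature techniques that are the technical core of Daskalopoulos and Wentworth's paper. Likewise, controlling the behaviour of $f$ along the ends of $B$ in the infinite cyclic case requires a careful application of Bers' theorem together with hyperbolicity estimates; packaging these steps uniformly across the periodic, reducible, and pseudo-Anosov possibilities is what makes the full theorem delicate.
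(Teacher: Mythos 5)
The paper does not prove this statement: it is imported verbatim as a special case of \cite[Theorem~5.7]{DW07} (Daskalopoulos--Wentworth) and used as a black box, so there is no internal proof to compare yours against. Judged as a standalone argument, your proposal is an outline with genuine gaps rather than a proof, and not only in the pseudo-Anosov subcase you flag.

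Two of the steps you present as routine would fail as written. First, in the reducible case there is no subvariety $V_\Gamma\subset\M_{g,n}$ ``parametrising Riemann surfaces carrying a distinguished isotopy class of multicurve'': every surface carries multicurves, a choice of isotopy class is not an analytic condition, and the object you are gesturing at is an infinite-degree covering-space-like moduli problem over $\M_{g,n}$, not a subvariety. The pinching/cut-and-glue decomposition into $\prod_j\M_{g_j,n_j}$ times twist parameters is the Fenchel--Nielsen picture, which is only real-analytic; it does not give a holomorphic fibration, so the inductive collapse has no holomorphic content. (The boundary strata of Deligne--Mumford, where such a fibration does exist, are not met by the image of $f$.) Second, in the periodic case the claim that a holomorphic map from a finite-type $B$ into the bounded domain $\mathcal T_{g,n}$ is forced to be constant by ``a Schwarz-lemma style argument at the punctures'' is false: any hyperbolic finite-type Riemann surface admits abundant nonconstant bounded holomorphic maps, and constancy of a family with trivial monodromy is essentially the Imayoshi--Shiga rigidity statement, i.e.\ a substantial part of what is to be proved. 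Together with the pseudo-Anosov case, whose incompleteness you acknowledge (an equivariant holomorphic lift need not land on the Teichm\"uller axis), all three branches of your argument rest on unproved rigidity input. The honest conclusion, which you partly draw yourself, is that this theorem requires the harmonic-map/Weil--Petersson machinery of \cite{DW07} (or the original Imayoshi--Shiga arguments), which is exactly why the paper cites it rather than proving it.
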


Chen and Salter prove the following in \cite[\S3.3]{CS23}.

\begin{lemma}
Let $f:\Conf_n\C\to \Conf_m\C$ be a holomorphic map such that the induced map $\bar{f}:\Conf_n\C \to \Conf_m\C/\mathrm{Aff}$ is constant. Then $f$ is an affine twist of a root map.
\end{lemma}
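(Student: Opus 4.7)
The plan is as follows. Since $\bar{f}$ is constant, the image of $f$ is contained in a single $\Aff$-orbit $\mathcal{O} \subset \Conf_m\C$. I would fix a representative $S_0 \in \mathcal{O}$ and analyse its stabilizer: since any finite-order element of $\Aff \cong \C \rtimes \C^*$ is a rotation fixing a unique point, $\mathrm{Stab}(S_0)$ is cyclic of some order $p \geq 1$, and after translating $S_0$ within its orbit I can arrange that this stabilizer equals the subgroup $\mu_p \subset \C^* \hookrightarrow \Aff$ (for $p = 1$, I would instead just ensure $0 \notin S_0$). Then $S_0$ is $\mu_p$-invariant, so writing $\epsilon = 1$ if $0 \in S_0$ and $\epsilon = 0$ otherwise, one has $S_0 = r_{p, \epsilon}(S'_0)$ for the unique $S'_0 = \{s^p : s \in S_0 \setminus \{0\}\} \in \Conf_k\C^*$, where $k = (m - \epsilon)/p$.

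Next I would extract two globally defined holomorphic functions from $f$. The natural map $\Aff \to \mathcal{O}$, $A \mapsto A \cdot S_0$, is a covering whose fibres are left $\mu_p$-cosets in $\Aff$. Writing $A(z) = az + t$, the $\mu_p$-action on $\Aff$ sends $(a, t) \mapsto (\zeta a, t)$, so both the translation part $t$ and the $p$-th power $a^p$ of the linear part are $\mu_p$-invariant. Since $\Aff \to \mathcal{O}$ is a covering, $f$ admits local holomorphic lifts $A \colon U \to \Aff$; by the $\mu_p$-invariance, the functions $t(y) := A(y)(0)$ and $b(y) := a(y)^p$ patch together to give well-defined holomorphic functions $t \colon \Conf_n\C \to \C$ and $b \colon \Conf_n\C \to \C^*$. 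This step is essentially the main technical content, but it is routine covering-space theory combined with the explicit $\mu_p$-action.

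Finally, using the equivariance identity $r_{p, \epsilon}(c^p S') = c \cdot r_{p, \epsilon}(S')$ for $c \in \C^*$, which is well-defined because $r_{p, \epsilon}(S')$ is $\mu_p$-invariant, I would verify
\[
  f(y) = r_{p, \epsilon}\bigl(b(y) \cdot S'_0\bigr) + t(y).
\]
The inner map $y \mapsto b(y) \cdot S'_0$ is an affine twist of the constant map $y \mapsto S'_0$ (since $\C^* \subset \Aff$ acts element-wise on $\Conf_k\C^*$), so composing with $r_{p, \epsilon}$ exhibits $y \mapsto r_{p, \epsilon}(b(y) \cdot S'_0)$ as a root map, and the further addition of $t(y)$ is an affine twist by translations. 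I do not expect any step to be a serious obstacle: the key insight is that the $\mu_p$-indeterminacy in any local lift of the linear part $a(y)$ is precisely the indeterminacy killed by taking the $p$-th power, matching exactly the indeterminacy absorbed when passing through the root map $r_{p, \epsilon}$.
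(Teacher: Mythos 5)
Your argument is correct. Note that the paper does not prove this lemma itself --- it is quoted from Chen--Salter \cite[\S3.3]{CS23} --- but your proof is essentially their argument: identify the orbit $\mathcal{O}$ with $\Aff/\mu_p$ via the (finite, hence cyclic) stabilizer of a normalized representative $S_0$, use the covering $\Aff\to\Aff/\mu_p$ to extract the globally well-defined invariants $t(y)$ and $a(y)^p$ from local lifts, and reassemble $f$ as a translation-twist of $r_{p,\epsilon}$ applied to a $\C^*$-twist of the constant map $S_0'$. The only point worth making explicit in a full writeup is why $f$ is holomorphic as a map into $\mathcal{O}$ with its intrinsic complex structure (the orbit of the algebraic group $\Aff$ is a locally closed smooth subvariety, so $\Aff/\mu_p\to\mathcal{O}$ is an isomorphism), which is needed before invoking local lifting.
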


\subsection{Uniqueness}

The following result is a generalization of a theorem of Imayoshi--Shiga \cite{IS88}. We mimic the proof of \cite[Proposition 3.2]{AAS18} and invoke \cite[Theorem 2.5]{CS23}.

\begin{theorem}
  \label{thm:AAS18}
  Let $M$ be a smooth, quasi-projective complex variety, let $\M_{g,n}'$ be a finite orbifold cover of $\M_{g,n}$, and let $f,g\colon M\to\M_{g,n}'$ be non-constant holomorphic or antiholomorphic maps. If $f$ and $g$ are homotopic as orbifold maps, then $f=g$.
\end{theorem}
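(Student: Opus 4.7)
The plan is to imitate the argument of Antolín-Camarena, Aramayona, and Souto \cite{AAS18} by reducing to the one-dimensional case and invoking the Imayoshi--Shiga rigidity theorem for holomorphic maps into finite orbifold covers of $\M_{g,n}$, as recorded in \cite[Theorem 2.5]{CS23}. The reduction has two steps: first dispense with the antiholomorphic case, then restrict to algebraic curves.

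To handle antiholomorphic maps, I would use that $\M_{g,n}$ carries a canonical antiholomorphic involution $\sigma$ sending a marked Riemann surface to its complex conjugate. Passing, if necessary, to a further finite orbifold cover that is invariant under $\sigma$ (which loses no information since finite covers of finite covers are finite covers), one obtains a lift $\tilde\sigma$ to $\M_{g,n}'$. Postcomposing an antiholomorphic map with $\tilde\sigma$ yields a holomorphic map, and the homotopy between $f$ and $g$ composes with $\tilde\sigma$ to give a homotopy between the resulting holomorphic maps. Thus one may assume $f$ and $g$ are both holomorphic.

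Next, I would reduce the dimension. Since $f$ and $g$ are non-constant on the connected complex manifold $M$, the loci $C_f, C_g \subset M$ where $df$ respectively $dg$ fails to have maximal rank are proper analytic subsets. Fix $p \in M$. By a Bertini-type argument applied to a smooth projective compactification of $M$, one can find a smooth irreducible algebraic curve $C \subset M$ passing through $p$ and not contained in $C_f \cup C_g$. The restrictions $f|_C$ and $g|_C$ are then non-constant holomorphic maps from a Riemann surface of finite type into $\M_{g,n}'$, and the homotopy between $f$ and $g$ restricts to a homotopy of these restrictions as orbifold maps. By \cite[Theorem 2.5]{CS23}, $f|_C = g|_C$, so in particular $f(p) = g(p)$. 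Since $p$ was arbitrary, $f = g$.

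The main obstacle is the curve-selection step: one must verify that through every point of $M$ there passes an irreducible algebraic curve on which both $f$ and $g$ are non-constant. This requires care with the quasi-projective (as opposed to projective) setting, since one must keep the curve inside $M$ while avoiding the proper analytic subset $C_f \cup C_g$. The standard resolution is to embed a compactification $\bar M$ into projective space and slice by a sufficiently generic linear section of the appropriate codimension through $p$, noting that Zariski-generic such sections are smooth irreducible curves meeting $M$ and are not contained in any fixed proper Zariski-closed subset.
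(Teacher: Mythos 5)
Your proof is correct and follows essentially the same route as the paper: the paper's entire argument is to restrict to a generic algebraic curve in $M$ (a Riemann surface of finite type) and invoke \cite[Theorem 2.5]{CS23}, which is exactly your second step. Your preliminary reduction via the antiholomorphic involution is unnecessary, since \cite[Theorem 2.5]{CS23} already allows holomorphic or antiholomorphic maps on curves; worse, it does not cover the mixed case where one of $f,g$ is holomorphic and the other antiholomorphic (postcomposing both with $\tilde\sigma$ merely swaps which one is antiholomorphic), and that mixed case is genuinely needed in the later applications (\Cref{prop:grothendieck} and \Cref{thm:hol-to-H/PSL2Z}). Dropping your first step and applying the curve-level theorem in its full holomorphic-or-antiholomorphic generality repairs this and recovers the paper's proof.
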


\begin{proof}
  Chen and Salter's result \cite[Theorem 2.5]{CS23} handles the case where $M$ is a Riemann surface of finite type. For the general case, note that a generic curve in $M$ is a Riemann surface of finite type. Thus, $f$ and $g$ agree on a generic curve in $M$, and hence $f=g$.
\end{proof}

Note that $\Conf_m\C$ is a smooth quasi-projective variety, so \Cref{thm:AAS18} applies to it. Furthermore, since $\Conf_m\C$ is a $K(\pi,1)$ space, $f,g\colon Y\to\M_{g,n}'$ are homotopic as orbifold maps if and only if $f_*,g_*\colon\pi_1(Y)\to\pi_1(\M_{g,n}')$ are conjugate, for any covering space $Y$ of $\Conf_m\C$.

The following consequence is proved by Chen and Salter in the special case where $m=n$ \cite[\S3.1]{CS23}. Note that there is a natural holomorphic map $\Conf_n\C\to\M_{0,n+1}$, given by associating to $\{x_1,\ldots,x_n\}\in\Conf_n\C$ the Riemann surface $\CP^1=\C\cup\{\infty\}$, with marked points $\{x_1,\ldots,x_n,\infty\}$.

\begin{proposition}
  \label{prop:grothendieck}
  Let $f,g\colon\Conf_m\C\to\Conf_n\C$ be holomorphic maps such that the induced maps $\bar{f},\bar{g}\colon\Conf_m\C\to\M_{0,n+1}$ are not constant. Suppose that $f_*,g_*\colon B_m\to B_n$ differ by at most an automorphism of $B_n$ and a transvection $B_m\to Z(B_n)$. Then $f$ is an affine twist of $g$.
\end{proposition}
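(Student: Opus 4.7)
The plan is to descend $f$ and $g$ to maps $\Conf_m\C \to \M_{0,n+1}$, show these coincide using \Cref{thm:AAS18}, and then recover the affine twist by passing to ordered configuration spaces. Recall that $\M_{0,n+1} = \Conf_n\C/\Aff$ via the natural map $p$ sending $\{x_1,\ldots,x_n\}$ to $(\CP^1;x_1,\ldots,x_n,\infty)$, and $p$ induces the quotient $B_n \twoheadrightarrow B_n/Z(B_n)$ on orbifold fundamental groups. Set $\bar f \coloneqq p \circ f$ and $\bar g \coloneqq p \circ g$. The hypothesis $f_\ast = \tau \circ g_\ast^t$ becomes $\bar f_\ast = \bar\tau \circ \bar g_\ast$ modulo $Z(B_n)$, since the transvection $t$ has image in $Z(B_n)$; here $\bar\tau \in \Aut(B_n/Z(B_n))$ is the automorphism induced by $\tau$.

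By Dyer--Grossman, $\Aut(B_n) = \Inn(B_n) \rtimes \langle \iota \rangle$, where $\iota(\sigma_i)=\sigma_i^{-1}$, so $\bar\tau$ is either inner or differs from an inner automorphism by the descended $\bar\iota$. If $\bar\tau$ is inner, then $\bar f_\ast$ and $\bar g_\ast$ are conjugate; since $\Conf_m\C$ is a smooth quasi-projective $K(\pi,1)$, \Cref{thm:AAS18} yields $\bar f = \bar g$. Otherwise, note that $\iota$ is realized on $B_n$ by complex conjugation on $\Conf_n\C$, which descends to an antiholomorphic involution $j$ of $\M_{0,n+1}$, so that $\bar f_\ast$ is conjugate to $(j \circ \bar g)_\ast$. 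Then \Cref{thm:AAS18}, which handles both holomorphic and antiholomorphic maps, forces $\bar f = j \circ \bar g$; but $\bar f$ is holomorphic and non-constant while $j \circ \bar g$ is antiholomorphic, so $\bar f$ would have to be constant, contradicting the hypothesis. Hence $\bar f = \bar g$.

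Now the configurations $f(y)$ and $g(y)$ share an $\Aff$-orbit for every $y \in \Conf_m\C$, and it remains to assemble these pointwise into a holomorphic $A \colon \Conf_m\C \to \Aff$ with $f = A \cdot g$. Pass to the finite cover $Y \to \Conf_m\C$ corresponding to $(f_\ast)^{-1}(P_n) \cap (g_\ast)^{-1}(P_n)$, so that $f$ and $g$ lift to ordered configurations $\tilde f, \tilde g \colon Y \to \PConf_n\C$. Because $Z(B_n) \subset P_n$ and $\iota$ acts as the identity on $S_n = B_n/P_n$, the permutation representations $B_m \to S_n$ induced by $f_\ast$ and $g_\ast$ differ only by conjugation by some fixed $\sigma \in S_n$; after reordering the coordinates of $\tilde f$ by $\sigma^{-1}$, both lifts have the same monodromy. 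Choosing the ordered lifts at a basepoint $y_0$ so that $\tilde f(y_0) = A_0 \cdot \tilde g(y_0)$ coordinate-wise for some $A_0 \in \Aff$, define
\[
A(y)(x) \coloneqq \frac{\tilde f_2(y) - \tilde f_1(y)}{\tilde g_2(y) - \tilde g_1(y)}\bigl(x - \tilde g_1(y)\bigr) + \tilde f_1(y).
\]
This is holomorphic on $Y$, agrees with $A_0$ at $y_0$, and the identity theorem propagates the relation $\tilde f_i(y) = A(y)(\tilde g_i(y))$ for all $i$ from the basepoint to all of $Y$; matching monodromy then forces $A$ to be invariant under the deck group $B_m/\pi_1(Y)$, so it descends to the required affine twist $\Conf_m\C \to \Aff$.

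I expect the main obstacle to be this descent step: identifying a cover $Y$ on which both $f$ and $g$ lift to ordered configurations, using the structure of $\Aut(B_n)$ and the centrality of the transvection to match the two monodromy representations by a single global relabeling, and then verifying via the identity theorem that the holomorphic $A$ defined on $Y$ is genuinely deck-invariant. By contrast, the reduction $\bar f = \bar g$ is more conceptual, hinging on \Cref{thm:AAS18} together with the fact that the outer automorphism $\iota$ of $B_n$ is realized antiholomorphically on $\M_{0,n+1}$.
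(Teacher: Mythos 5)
Your overall strategy matches the paper's: kill the outer automorphism $\iota$ using the fact that it is realized antiholomorphically, apply \Cref{thm:AAS18} to conclude the induced maps to the moduli space agree, then pass to a finite cover where $f$ and $g$ lift to ordered configurations and write down the affine map explicitly via two coordinates. Two points, however, need attention. First, a minor one: $\Conf_n\C/\Aff$ is not equal to $\M_{0,n+1}$ but is a degree-$(n+1)$ finite orbifold cover of it (one must remember which marked point is $\infty$). This matters, because $\bar f=\bar g$ as maps to $\M_{0,n+1}$ only tells you that $f(y)\cup\{\infty\}$ and $g(y)\cup\{\infty\}$ are M\"obius-equivalent, not that $f(y)$ and $g(y)$ share an $\Aff$-orbit. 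The fix is to apply \Cref{thm:AAS18} with target the finite orbifold cover $\Conf_n\C/\Aff$ itself, which is what your argument implicitly does.

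The genuine gap is the step ``the identity theorem propagates the relation $\tilde f_i(y)=A(y)(\tilde g_i(y))$ from the basepoint to all of $Y$.'' The functions $h_i\coloneqq\tilde f_i-A(\cdot)(\tilde g_i)$ are holomorphic, but you only know they vanish at the single point $y_0$; the identity theorem gives nothing from that. Knowing $f(y)$ and $g(y)$ share an $\Aff$-orbit for each $y$ only says that for each $y$ there is \emph{some} permutation $\pi$ and affine map matching $\tilde g_i(y)$ with $\tilde f_{\pi(i)}(y)$, and a priori $\pi$ can jump as $y$ varies (ordered configurations can have nontrivial affine symmetries, so $\PConf_n\C/\Aff\to\Conf_n\C/\Aff$ is not an honest covering map and two lifts agreeing at a point need not agree everywhere). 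The paper closes this by applying \Cref{thm:AAS18} a second time, to the lifts $\tilde f,\tilde g\colon Y\to\PConf_n\C/\Aff$ (this is where quasi-projectivity of $Y$, via Grauert--Remmert, is needed --- a hypothesis you never verify). An alternative repair in the spirit of your argument: for each $\pi\in S_n$ let $V_\pi\subset Y$ be the closed analytic set where the relation holds with matching permutation $\pi$; since $Y=\bigcup_\pi V_\pi$ is a finite union and $Y$ is a connected complex manifold, some $V_{\pi_0}$ equals $Y$, and the corresponding holomorphic $B_{\pi_0}\colon Y\to\Aff$ then carries $g$ to $f$ everywhere and descends as you describe. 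Either way, the single-basepoint identity-theorem argument as written does not suffice.
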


\begin{proof}
  Without loss of generality, we can apply an affine twist by a power of the discriminant $\Delta\colon\Conf_m\C\to\C^*$ so that $f_*,g_*$ differ only by an automorphism of $B_n$.

  Dyer and Grossman showed that $\Out(B_n)\cong\langle\iota\rangle$, where $\iota$ is given by $\sigma_i\mapsto\sigma_i^{-1}$ for $i=1,\ldots,n-1$ \cite{DG81}. The involution $\iota$ is induced by the coordinate-wise complex conjugation on $\Conf_n\C$. Thus, if $f_*=\tau\circ g_*$ for an outer automorphism $\tau$ of $B_n$, then by \Cref{thm:AAS18} the induced maps $\Conf_m\C\to\M_{0,n+1}$ only differ by complex conjugation, which contradicts both $f,g$ being holomorphic. Hence $f_*,g_*$ are conjugate, and thus by \Cref{thm:AAS18} they induce the same maps $\bar{f}=\bar{g}\colon\Conf_m\C\to\M_{0,n+1}$.

  Let $Y$ be the finite cover of $\Conf_m\C$ such that $f,g$ lift to maps $\tilde{f},\tilde{g}\colon Y\to\PConf_n\C$, and write $\tilde{f}=(f_1,\ldots,f_n)$ and $\tilde{g}=(g_1,\ldots,g_n)$, where $f_i,g_i\colon Y\to\C$ for $i=1,\ldots,n$. Note that $Y$ is also a quasi-projective variety by a result of Grauert and Remmert \cite{GR58}. Since $\bar{f}=\bar{g}$ and $f_*,g_*$ are conjugate, $\tilde{f}_*,\tilde{g}_*$ are also conjugate. \Cref{thm:AAS18} implies that $\tilde{f},\tilde{g}$ induce identical maps $Y\to\PConf_n\C/\Aff$. Hence, there exists a function $\tilde{A}\colon Y\to\Aff$ such $\tilde{f}=\tilde{g}^{\tilde{A}}$.

  If $\tilde{A}\cdot z=az+b$ for all $z\in\C$, where $a,b\colon Y\to\C$, then for all distinct $i,j\in\{1,\ldots,n\}$ we have
  \[
    a = \frac{g_i-g_j}{f_i-f_j}, \qquad b = \frac{f_ig_j-f_jg_i}{f_i-f_j}.
  \]
  Hence $\tilde{A}$ is holomorphic. Furthermore, the above holds for all distinct indices $i,j$, and $\tilde{f},\tilde{g}$ are equivariant with respect to the group of deck transformations of the covering map $Y\to\Conf_m\C$, which is a subgroup of $S_n$. Hence $\tilde{A}$ descends to a holomorphic map $A\colon\Conf_m\C\to\Aff$ such that $f=g^A$.
\end{proof}

\begin{remark}
  Grauert and Remmert's result is not needed by Chen and Salter, since in the case $m=n$ all such holomorphic maps lift to self maps of $\PConf_n\C$, which is clearly quasi-projective. However, we will apply \Cref{prop:grothendieck} with the map $\Psi_3$, which does not lift to a holomorphic map $\PConf_3\C\to\PConf_4\C$. Neither the argument of Grauert and Remmert, nor Grothendieck's vast generalization \cite[XII, Th{\'e}or{\`e}me 5.1]{Gro71}, give explicit equations for an arbitrary finite cover of $\Conf_n\C$. However, since $\Psi_3$ is also algebraic, this enables us to write down explicit equations for the pullback $Y\to\Conf_3\C$ of the cover $\PConf_4\C\to\Conf_4\C$ along $\Psi_3$ that express $Y$ as a quasi-projective variety.
\end{remark}

\subsection{Classifying the holomorphic maps}

We are finally ready to prove \Cref{thm:main-theorem}.

\begin{proof}[Proof of \Cref{thm:main-theorem}]
  Let $f\colon\Conf_n\C\to\Conf_m\C$ be a holomorphic map, where $n\geq3$ and $m\in\{3,4\}$. If $f$ is not a root map or the identity, then by \Cref{prop:multitwist} and \Cref{thm:DW07} the homomorphism $f_*\colon B_n\to B_m$ must be irreducible, non-cyclic, and $f_*(\sigma_i)$ must have a power that is a multitwist for all $i=1,\ldots,n-1$.

  If $m=3$, then for $f_*$ to be non-cyclic we must have $n\in\{3,4\}$ by \Cref{thm:bntob3}. Since $R_*$ is surjective, $n\in\{3,4\}$ is also necessary if $m=4$.

  Suppose $n=3$. By \Cref{thm:b3-to-parabolic} and \Cref{thm:b3-to-b4-pseudoperiodic}, $f_*$ is either the identity on $\Conf_3\C$ or $\Psi_{3*}$, up to a transvection $B_3\to Z(B_m)$ and an automorphism of $B_m$.

  Suppose $n=4$. By \Cref{thm:bntob3,thm:B4-to-B4} and the above, $f_*$ is either $R_*$, the identity on $\Conf_4\C$, or $\Psi_{3*}\circ R_*$, up to a transvection $B_4\to Z(B_m)$ and an automorphism of $B_m$.

  Since $f$ is not the identity, then $f$ is an affine twist of $R$, $\Psi_3$, or $\Psi_3\circ R$ by \Cref{prop:grothendieck}.
\end{proof}

Next we classify the holomorphic maps $\Conf_n\C\to\PSL_2\Z\backslash\HH$. There is a natural map $\SL_2\Z\backslash\HH\to\PSL_2\Z\backslash\HH$ corresponding to the quotient $\SL_2\Z\twoheadrightarrow\PSL_2\Z$. Moreover, $\PSL_2\Z\backslash\HH$ is the moduli space of quotients of elliptic curves by their hyperelliptic involution. This is the same data as a four times marked sphere, where one of the four points is distinguished. Hence $\PSL_2\Z\backslash\HH$ is a finite orbifold cover of $\M_{0,4}$.

\begin{theorem}
  \label{thm:hol-to-H/PSL2Z}
  Let $f\colon\Conf_n\C\to\PSL_2\Z\backslash\HH$ be a holomorphic map.
  \begin{itemize}
  \item If $n\geq5$, then $f$ is constant.
  \item If $n=4$, then $f=g\circ R$, where $g\colon\Conf_3\C\to\PSL_2\Z\backslash\HH$ is holomorphic.
  \item If $n=3$, then either $f$ is constant, or it is the post-composition $\bar{H}$ of the hyperelliptic embedding $H$ with the natural map $\M_{1,1}\cong\SL_2\Z\backslash\HH\to\PSL_2\Z\backslash\HH$.
  \end{itemize}
\end{theorem}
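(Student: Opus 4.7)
My plan is to mirror the proofs of \Cref{thm:main-theorem,thm:family-elliptic-curves}: classify the induced homomorphism $f_*\colon B_n\to\pi_1^{\orb}(\PSL_2\Z\backslash\HH)=\PSL_2\Z$ via \Cref{thm:BntoG}, constrain it further using a Bers-type multitwist condition, and conclude using \Cref{thm:AAS18}.

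First I identify $\PSL_2\Z\backslash\HH$ with the moduli space of four-times marked spheres with one distinguished marked point, so that the forgetful map $\PSL_2\Z\backslash\HH\to\M_{0,4}$ is a degree-four orbifold cover realizing $\PSL_2\Z$ as the index-four subgroup of $\Mod_{0,4}$ stabilizing a marked point. Post-composing $f$ with this cover gives $\bar f\colon\Conf_n\C\to\M_{0,4}$, and \Cref{prop:multitwist} says some positive power of each $\bar f_*(\sigma_i)=f_*(\sigma_i)$ is a multitwist in $\Mod_{0,4}$. Because any essential multicurve on the four-times marked sphere contains only one isotopy class of essential component, multitwists in $\Mod_{0,4}$ are powers of a single Dehn twist, and those lying in the subgroup $\PSL_2\Z$ are precisely the nontrivial parabolic elements. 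Hence each $f_*(\sigma_i)$ is either parabolic or torsion in $\PSL_2\Z$.

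Now I apply \Cref{thm:BntoG}. For $n\ge5$, $f_*$ is cyclic, so $\bar f$ is constant by \Cref{thm:DW07}, and $f$ itself is constant because the cover has discrete fibers. For $n=3$, if $f_*$ is cyclic the same argument gives $f$ constant. Otherwise \Cref{thm:BntoG}(iii) descends $f_*$ to a non-cyclic endomorphism of $\PSL_2\Z$, which by \Cref{prop:End-PSL2Z} sends $\alpha$ and $\beta$ to conjugates of order-$3$ and order-$2$ elements respectively. Therefore the image of the induced map $\Z=B_3^{\rm ab}\to\PSL_2\Z^{\rm ab}=\Z/6$ has order divisible by both $2$ and $3$; being cyclic generated by the class of $\sigma_1$, this forces $f_*(\sigma_1)$ to have order at least $6$ in $\PSL_2\Z$. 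Since $\PSL_2\Z\cong\Z/3*\Z/2$ has no torsion of order exceeding $3$, the element $f_*(\sigma_1)$ is not torsion, so it is parabolic. \Cref{thm:b3-to-parabolic} then identifies $f_*$, up to an automorphism of $\PSL_2\Z$, with the quotient $B_3\twoheadrightarrow\PSL_2\Z$, which equals $\bar H_*$. For $n=4$, \Cref{thm:BntoG}(ii) gives $f_*=g_*\circ R_*$ for some $g_*\colon B_3\to\PSL_2\Z$, and the same considerations applied to $g_*$ yield that either $g_*$ is cyclic (with $f$ constant and $g$ the corresponding constant map) or $g_*$ equals $\bar H_*$ up to an automorphism of $\PSL_2\Z$.

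For the uniqueness, the nontrivial outer automorphism of $\PSL_2\Z$ is realized on $\PSL_2\Z\backslash\HH$ by the antiholomorphic involution $\sigma$ induced by $\tau\mapsto-\bar\tau$ on $\HH$, which conjugates $\SL_2\Z$ by $\mathrm{diag}(1,-1)\in\GL_2\Z$. Thus in the non-cyclic case, $f$ is homotopic as an orbifold map to either $\bar H$ (resp.\ $\bar H\circ R$ when $n=4$) or to $\sigma\circ\bar H$ (resp.\ $\sigma\circ\bar H\circ R$). The second alternative would force by \Cref{thm:AAS18} the holomorphic $f$ to coincide with an antiholomorphic map, making $f$ simultaneously holomorphic and antiholomorphic, hence constant, contradicting non-cyclicity. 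Therefore $f=\bar H$ (resp.\ $f=\bar H\circ R$) by \Cref{thm:AAS18}. The main technical hurdle will be the explicit identification of $\PSL_2\Z$ inside $\Mod_{0,4}$ together with the verification that multitwists in this subgroup correspond precisely to parabolic elements.
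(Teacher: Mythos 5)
Your proof is correct and follows essentially the same route as the paper's: apply \Cref{thm:DW07} and \Cref{thm:BntoG} to reduce to $n\in\{3,4\}$, use the Bers multitwist constraint (\Cref{prop:multitwist}) via the finite orbifold cover $\PSL_2\Z\backslash\HH\to\M_{0,4}$ to force the $f_*(\sigma_i)$ to be parabolic so that \Cref{thm:b3-to-parabolic} applies, and finish with \Cref{thm:AAS18} together with an (anti)holomorphicity argument to eliminate the outer automorphism. The only differences are cosmetic: you realize the outer automorphism by an antiholomorphic involution of the target rather than by coordinate-wise complex conjugation on $\Conf_n\C$, and you spell out (via the abelianization $\Z/6$) why the $f_*(\sigma_i)$ cannot be torsion, a step the paper leaves implicit.
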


\begin{proof}
  Suppose that $f$ is non-constant. By \Cref{thm:DW07}, $f_*\colon B_n\to\PSL_2\Z$ is non-cyclic, and hence by \Cref{thm:BntoG}, $n<5$, and if $n=4$, then $f_*$ factors through $R_*$. By \Cref{prop:multitwist}, the $f_*(\sigma_i)$ are not hyperbolic. Let $\iota\in\Aut(B_n)$ be the inversion automorphism which is defined by $\sigma_i\mapsto\sigma_i^{-1}, i=1,\cdots,n-1$. By \Cref{thm:b3-to-parabolic} $n=3$ or $n=4$, and $f_*$ is $H_*$ up to precomposition by one or both of $R_*$ and $\iota$.

  Note that coordinate-wise complex conjugation is an antiholomorphic self map of $\Conf_n\C$ inducing $\iota$. Hence, by \Cref{thm:AAS18} either $f=\bar{H}$, $f=R\circ\bar{H}$, or $f$ is one of these maps precomposed with coordinate-wise complex conjugation.

  Since complex conjugation is antiholomorphic, the only holomorphic possibilities for $f$ are $f=H$ when $n=3$, and $f=H\circ R$ when $n=4$, as desired.
\end{proof}

\begin{proof}[Proof of \Cref{thm:family-elliptic-curves}]
  Let $f\colon\Conf_n\C\to\M_{1,1}$ be a holomorphic family such that the induced map $\bar{f}\colon\Conf_n\C\to\PSL_2\Z\backslash\HH$ non-constant. Since the kernel of $\SL_2\Z\twoheadrightarrow\PSL_2\Z$ is $Z(\SL_2\Z)\cong\Z/2$, and $H^1(B_n;\Z/2)\cong\Z/2$, each holomorphic map $\Conf_n\C\to\PSL_2\Z\backslash\HH$ lifts to at most two holomorphic families $\Conf_n\C\to\M_{1,1}$, whose induced homomorphisms $B_n\to\SL_2\Z$ differ by a transvection $B_n\to Z(\SL_2\Z)$. For any holomorphic family, we can induce such a transvection by pre-composing with $\id^{\Delta}$, since $\Delta\colon\Conf_n\C\to\C^*$ induces the abelianization map $\Delta_*\colon B_n\to\Z$.

  Hence, if $\bar{f}$ is non-constant, then by \Cref{thm:hol-to-H/PSL2Z} we have $n\in\{3,4\}$ and $f=H$ up to pre-composition by one or both of $R$ and $\id^{\Delta}$.
\end{proof}

\printbibliography

\end{document}